\documentclass{amsart}
\usepackage{amsmath}
\usepackage{amssymb}
\usepackage{amsthm}
\usepackage{tikz}
\usepackage{hyperref}

\newtheorem{theorem}{Theorem}[section]
\newtheorem{remark}[theorem]{Remark}

\newtheorem{lemma}[theorem]{Lemma}

\newtheorem{proposition}[theorem]{Proposition}
\newtheorem{corollary}[theorem]{Corollary}

\DeclareMathOperator{\sgn}{sgn}
\begin{document}
\title{Twisted  characteristic $p$ zeta  functions}

\author{Bruno Angl\`es  \and Tuan Ngo Dac  \and Floric Tavares Ribeiro}

\address{
Universit\'e de Caen Normandie,
Laboratoire de Math\'ematiques Nicolas Oresme,
CNRS UMR 6139,
Campus II, Boulevard Mar\'echal Juin,
B.P. 5186,
14032 Caen Cedex, France.
}
\email{bruno.angles@unicaen.fr,tuan.ngodac@unicaen.fr,floric.tavares-ribeiro@unicaen.fr}

\begin{abstract}
We propose a ``twisted" variation of zeta functions  introduced by David Goss in 1979.
\end{abstract}

\date{\today}

\maketitle

\tableofcontents


\section{Introduction}\label{Introduction}${}$\par

Let $\mathbb F_q$ be a finite field having $q$ elements, and let $\theta$ be an indeterminate over $\mathbb F_q;$ the Carlitz-Goss zeta function is defined as follows (\cite{GOS}, chapter 8):
$$\zeta_A((x; y)):=\sum_{d\geq 0}(\sum_{a\in A_{+,d}}\frac{1}{(\frac {a}{\theta^{d}})^y})x^{-d}\in \mathbb C_\infty^\times,$$
where $(x;y)\in \mathbb C_\infty^\times \times \mathbb Z_p,$ $A:=\mathbb F_q[\theta],$ $A_{+,d}:=\{ a\in A, a\, {\rm monic}\, , \deg_\theta a=d\},$ and $\mathbb C_\infty$ is the completion of an algebraic closure of $\mathbb F_q((\frac{1}{\theta})).$  The facts that $\zeta_A(.)$ converges on $\mathbb S_\infty:= \mathbb C_\infty^\times \times \mathbb Z_p$ and is ``essentially algebraic" (i.e. for $y\in \mathbb N,$ $\zeta_A((\theta^{y}x; -y))$ is a polynomial in $x^{-1}$ with coefficients in $A$) can be proved by using the following  vanishing result: for $n\in \mathbb  N, $ for $d(q-1)> \ell_q(n),$ $\sum_{a\in A_{+,d}} a^n =0,$ where $\ell_q(n)$ is the sum of digits of $n$ in base $q$ (this is a consequence of the combinatorial result: \cite{GOS}, Lemma 8.8.1).  The Carlitz-Goss zeta function is an example of a new kind of $L$-series introduced by D. Goss in \cite{GOS1} (see also \cite{GOS2}). The ``special values" of these type of $L$-functions are at the heart of function field arithmetic and we refer the interested reader to (this list is clearly  not exhaustive): \cite{AND&THA}, \cite{BOC}, \cite{LAF},  \cite{TAE2}.\par
Let $s,n\in \mathbb N,$ $n\geq 1,$ let $x\in \mathbb C_\infty^\times$ and $y_1, \ldots, y_n \in \mathbb Z_p;$ let's consider the following ``twisted" version of the Carlitz-Goss zeta function:
$$\zeta_A(t_1,\ldots, t_s; \theta_1, \ldots, \theta_n; (x; y_1, \ldots, y_n)):=
\sum_{d\geq 0}(\sum_{a\in A_{+,d}}\frac{a(t_1)\cdots a(t_s)}{(\frac {a(\theta_1)}{\theta_1^{d}})^{y_1}\cdots(\frac {a(\theta_n)}{\theta_n^{d}})^{y_n} })x^{-d},$$
where $t_1, \ldots, t_s\in \mathbb C_\infty,$ $\theta_1, \ldots, \theta_n \in \mathbb C_\infty^\times$ and are such that $\frac{1}{\theta_i}, i=1, \ldots, n,$ is in the maximal ideal of the valuation ring of $\mathbb C_\infty.$ Using the fact that $\forall t_1, \ldots, t_s \in \mathbb C_\infty,$  $\forall d>\frac{s}{q-1},$ $\sum_{a\in A_{+,d}} a(t_1)\cdots a(t_s)=0$ (this is again a consequence of \cite{GOS}, Lemma 8.8.1),   one can prove that $\zeta_A(t_1,\ldots, t_s; \theta_1, \ldots, \theta_n;.)$ converges on $\mathbb C_\infty^\times \times \mathbb Z_p^n$ and that this function is essentially algebraic, i.e. for $y_1, \ldots, y_n \in \mathbb N$:  $$\zeta_A(t_1,\ldots, t_s; \theta_1, \ldots, \theta_n; (\prod_{i=1}^n \theta_i^{y_i}x; -y_1, \ldots, -y_n))\in \mathbb F_q[t_1, \ldots, t_s,\theta_1, \ldots, \theta_n, x^{-1}].$$
Observe that if $s=0,n=1$ and $\theta_1=\theta,$ we recover the Carlitz-Goss zeta function, and if $s\geq 0,$ $n=1,$ $\theta_1=\theta,$ we recover the $L$-series introduced by F. Pellarin in \cite{PEL}.\par
Our aim in this article is to extend the above construction to the case where $K/\mathbb F_q$ is a global function field, $\infty$ is a place of $K,$ and $A$ is the ring of elements of $K$ which are regular outside $\infty.$ Although the twisted zeta functions proposed in this paper are in the spirit of the twisted Carlitz-Goss zeta functions defined above, the proof of the convergence of these functions, and their $v$-adic interpolation at finite places $v$ of $K,$  are  more subtle and  based on a technical key  Lemma which generalizes the vanishing result mentioned above (Lemma \ref{LemmaS3.2}).  The zeta functions introduced by D. Goss as well as their deformations over affinoid algebras are examples of such twisted zeta functions (see example \ref{GOSS}).\par
The main ingredient to our construction is what we called \textit{admissible maps} (section \ref{Notation}). Let $K_\infty$ be the $\infty$-adic completion of $K,$ let $\mathbb F_\infty$ be the residue field of $K_\infty,$ and let $\sgn:K_\infty^\times \rightarrow \mathbb F_\infty^\times$ be a ``sign function", i.e. $\sgn$ is a group homomorphism such that $\sgn\mid_{\mathbb F_\infty^\times}={\rm Id}_{\mathbb F_\infty^\times}.$ Let $\pi$ be a prime of $K_\infty$ such that $\pi\in {\rm Ker} \, \sgn.$ Let $\overline{K}_\infty$ be a fixed algebraic closure of $K_\infty,$ let $\overline{\mathbb F}_q$ be the algebraic closure of $\mathbb F_q$ in $\overline{K}_\infty,$ then $\sgn$ can be naturally extended to a morphism $\sgn:\overline{K}_\infty\rightarrow \overline{\mathbb F}_q^\times$ (see section \ref{Notation}) according to our  choice of $\pi.$ Let $v_\infty:\overline{K}_\infty\rightarrow \mathbb Q\cup\{+\infty\}$ be the valuation on $\overline{K}_\infty$ such that $v_\infty(\pi)=1.$ For $x\in \overline{K}_\infty^\times,$ let's set:
$$\langle x \rangle =\frac{x}{\sgn (x)\pi^{v_\infty(x)}}.$$
Let $\mathcal I(A)$ be the group of non-zero fractional ideals of $A.$ By definition, an admissible map $\eta: \mathcal I(A) \rightarrow \overline{K}_\infty$ is a map such that there exist $n(\eta)\in \mathbb Z_p,$ $\gamma(\eta)\in \overline{K}_\infty^\times,$ and an open subgroup of finite index $M(\eta)\subset K_\infty^\times,$ with the following property:
$$\forall I\in \mathcal I(A), \forall a\in K^\times \cap M(\eta),\quad \eta(Ia)=\eta (I) \langle a \rangle ^{n(\eta)} \gamma_\eta^{v_\infty(a)}.$$
The twisted zeta functions considered in this work are constructed with the help of such admissible maps (section \ref{SVTZF}). Let's give a basic example. Let $\eta: \mathcal I(A)\rightarrow \overline{K}_\infty^\times$ be an admissible map such that $n(\eta)=1 $ and $\eta$ is algebraic (i.e. $\eta(\mathcal I(A))\subset \overline{K}^\times,$ where $\overline{K}$ is the algebraic closure of $K$ in $\overline{K}_\infty$). Let's set:
$$K(\eta)=K(\mathbb F_\infty,\gamma_\eta \pi^{-1},\eta_i(I), I\in \mathcal I(A)),$$
$$K_\infty(\langle \eta \rangle )=K_\infty( \langle \gamma_\eta \rangle , \langle \eta_i(I) \rangle , I\in \mathcal I(A)).$$
Observe that $K(\eta)/K$ is a finite extension, and $K_\infty(\langle \eta \rangle )/K_\infty$ is also a finite extension. Let $s,n\in \mathbb N,$ $n\geq 1.$ Let $\mathbb C_\infty$ be the completion of $\overline{K}_\infty.$ Let $\rho_1, \ldots \rho_s: K(\eta)\hookrightarrow \mathbb C_\infty$ be $s$ $\mathbb F_p$-algebra homomorphisms, and let $\sigma_1, \ldots , \sigma_n: K_\infty(\langle \eta \rangle )\hookrightarrow \mathbb C_\infty$ be $n$ continuous $\mathbb F_p$-algebra homomorphisms. For $u=(x; (y_1, \ldots, y_n))\in \mathbb C_\infty^\times\times \mathbb Z_p^n,$ let's set:
$$\zeta_{\eta,A}(\underline{\rho}; \underline{\sigma}; u)= \sum_{d\geq 0}(\sum_{\substack{I\in \mathcal I(A), I\subset A \\ \deg I=d}} \frac{\prod_{i=1}^s \rho_i(\eta(I))}{\prod_{j=1}^n \sigma_j(\langle \eta(I) \rangle ^{-{y_j}})})x^{-d}.$$
Then, as a consequence of Theorem \ref{TheoremS4.1}, $\zeta_{\eta,A}(\underline{\rho}; \underline{\sigma}; .)$ converges on $\mathbb C_\infty^\times\times \mathbb Z_p^n.$ One can easily notice that the above function, and its twists by some characters, generalize the twisted Carlitz-Goss zeta function. We refer the reader to section \ref{Examples} for  more detailed  examples.\par

For the sake of completeness, in the last section of this article, we also introduce multiple twisted zeta functions, as well as their $v$-adic interpolation at  finite places $v$ of K,  in the spirit of Thakur's multiple zeta values (\cite{AND&THA2},\cite{THA2}).\par


\section{Notation and preliminaries}\label{Notation}${}$\par

Let $\mathbb F_q$ be a finite field having $q$ elements and of characteristic $p.$ Let $K/\mathbb F_q$ be a global function field ($\mathbb F_q$ is algebraically closed in $K$). Let $\infty$ be a place of $K.$  We denote by $K_\infty$ the $\infty$-adic completion of $K,$ and by $\mathbb F_\infty$ the residue field of $K_\infty.$ We fix $\overline{K}_\infty$ an algebraic closure of $K_\infty.$ Let $\overline{K}$ be the algebraic closure of $K$ in $\overline{K}_\infty,$ and let $\overline{\mathbb F}_q\subset \overline{K}$ be the algebraic closure of $\mathbb F_q$ in $\overline{K}_\infty.$ Let $\mathbb C_\infty$ be the completion of $\overline{K}_\infty.$ Let $A$ be the ring of elements of $K$ which are regular outside of $\infty.$ Let $v_\infty: K_\infty \rightarrow \mathbb Z\cup \{+\infty\}$ be the normalized discrete valuation associated to the local field $K_\infty.$ We still denote by $v_\infty: \mathbb C_\infty\rightarrow \mathbb R\cup\{+\infty\}$  the extension of $v_\infty$ to $\mathbb C_\infty.$ \par
Let $\mathcal  I(A)$ be the group of fractional ideals of $A,$ and $\mathcal P(A)=\{ \alpha A, \alpha \in K^\times\}.$   Recall that ${\rm Pic}(A)=\frac{\mathcal I(A)}{\mathcal P(A)}$ is a finite abelian group. For any ideal $I\subset A, I\not =\{0\},$ we set:
 $$\deg I=\dim_{\mathbb F_q}\frac{A}{I}.$$
 Note that this function  on non-zero ideals of $A$ extends naturally  to a morphism $\deg: \mathcal I(A)\rightarrow \mathbb Z.$
 In particular, we have:
 $$\forall \alpha \in K^\times, \quad \deg \alpha := \deg \alpha A= -d_\infty v_\infty (\alpha),$$
 where $d_\infty =\dim_{\mathbb F_q} \mathbb F_\infty.$\par
 ${}$\par
Let $x \in \overline{K}_\infty$ such that $v_\infty(x)>0.$ Let  $(x_n)_{n\geq 1}$ be a sequence of elements in $ \overline{K}_\infty$ such that $x_1=x,$ and for $n\geq 1, $ $x_{n+1}^{n+1}=x_n.$ Let $z\in \mathbb Q,$ write $z=\frac{m}{ n!}, m\in \mathbb Z, n\geq 1,$ we set:
 $$x^z= x_n ^{m}.$$
 This is well-defined. We set:
 $$U_\infty =\{ y\in \overline{K}_\infty, v_\infty (y-1)>0\}.$$
 Observe that $U_\infty$ is a $\mathbb Q_p$-vector space. We have:
 $$\overline{K}_\infty^\times= x^{\mathbb Q}\times \overline{\mathbb F}_q^\times \times U_\infty.$$
 Let $y\in \overline{K}_\infty^\times$, then $y$ can be written in a unique way:
 $$y=x^{\frac{v_\infty(y)}{v_\infty(x)}}\omega_x(y)\langle y \rangle _x, \quad \omega_x(y)\in \overline{\mathbb F}_q^\times, \langle y \rangle _x\in U_\infty.$$
 Observe that the map $\langle . \rangle _x: \overline{K}_\infty^\times \rightarrow U_\infty$ is a group homomorphism such that for $y\in U_\infty,$ $\langle y \rangle _x=y.$ We will need the following Lemma in the sequel (see also \cite{GOS4}, Lemma 2):
\begin{lemma}\label{LemmaS2.1} ${}$\par
\noindent 1) Let $\langle . \rangle ': \overline{K}_\infty^\times\rightarrow U_\infty$ be a group homomorphism such that for $\forall y\in U_\infty, \langle y \rangle '=y.$ Then there exists a unique element $u\in U_\infty$ such that:
$$\forall y\in \overline{K}_\infty^\times, \quad \langle y \rangle '= \langle y \rangle _xu^{v_\infty(y)}.$$
\noindent 2) Let $\sgn:K_\infty^\times \rightarrow \overline{\mathbb F}_q^\times$ be a group homomorphism such that $\forall \zeta \in \mathbb F_q^\times, \sgn (\zeta)=\zeta.$ Then there exist $e\in \mathbb N,$ $e\equiv 1\pmod{q-1},$  and an  element $\zeta'\in \overline{\mathbb F}_q^\times,$ such that:
$$\forall y\in K_\infty^\times, \quad \sgn (y) = \omega_x(y)^e \zeta'^{v_\infty(y)}.$$
3) For $i=1,2,$ let $\langle . \rangle _i: \overline{K}_\infty^\times\rightarrow U_\infty$ be a group homomorphism such that $\forall y\in U_\infty, \langle y \rangle _i=y,$ and let $\gamma_i\in \overline{K}_\infty^\times.$ Let's assume that there exist an open subgroup of finite index $N\subset K_\infty^\times,$ and $n_1, n_2\in \mathbb Z_p,$ such that:
$$\forall a\in N\cap K^\times, \quad \langle a \rangle _1^{n_1}\gamma_1^{v_\infty(a)}=\langle a \rangle _2^{n_2}\gamma_2^{v_\infty(a)}.$$
Then $n_1=n_2.$
\end{lemma}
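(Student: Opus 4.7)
The plan is to specialise the hypothesis at a single element $a \in N \cap K^\times$ that lies deep enough in the principal unit filtration at $\infty$ so that the identity collapses to $a^{n_1 - n_2} = 1$, and then to conclude by the torsion-freeness of principal units in a characteristic $p$ local field.

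\emph{Producing the test element.} Because $N$ is open in $K_\infty^\times$, it contains $1 + \pi^k \mathcal O_\infty$ for some $k \geq 1$, where $\mathcal O_\infty$ is the valuation ring of $K_\infty$. The set $(1 + \pi^k \mathcal O_\infty) \setminus \{1\}$ is open and non-empty (it contains $1+\pi^k$), and $K$ is dense in $K_\infty$, so one may pick $a \in K \cap (1 + \pi^k \mathcal O_\infty)$ with $a \neq 1$. By construction $a \in N \cap K^\times$, $v_\infty(a) = 0$, and $a \in U_\infty$.

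\emph{Collapse and conclusion.} By assumption $\langle \cdot \rangle_i|_{U_\infty} = \mathrm{id}_{U_\infty}$, so $\langle a \rangle_i = a$ for $i = 1, 2$. Since $v_\infty(a) = 0$, the factors $\gamma_i^{v_\infty(a)}$ equal $1$, and the given identity becomes $a^{n_1} = a^{n_2}$, i.e.\ $a^{n_1 - n_2} = 1$ in $1 + \pi \mathcal O_\infty$. To conclude, I use that $1 + \pi \mathcal O_\infty$ is pro-$p$ (its successive quotients $(1 + \pi^j \mathcal O_\infty)/(1 + \pi^{j+1} \mathcal O_\infty)$ are isomorphic to the additive group of $\mathbb F_\infty$, whose order is a power of $p$) and is torsion-free as a $\mathbb Z_p$-module: writing $a = 1 + y$ with $y \neq 0$, the characteristic $p$ identity $(1+y)^{p^m} = 1 + y^{p^m}$ gives $a^{p^m} = 1 + y^{p^m} \neq 1$ for all $m \geq 0$. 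Hence the continuous homomorphism $\mathbb Z_p \to 1 + \pi \mathcal O_\infty$, $n \mapsto a^n$, has closed kernel containing no $p^k$, and is therefore trivial. Consequently $n_1 = n_2$.

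The main substantive point is the torsion-freeness argument, which relies essentially on $q$ being a $p$-power; producing $a$ and simplifying the equation are routine consequences of the definition of $\langle \cdot \rangle_i$ combined with the density of $K$ in $K_\infty$. I do not foresee any further difficulties.
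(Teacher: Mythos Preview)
Your proposal addresses only part 3) of the lemma; parts 1) and 2) are left untouched. The paper does supply (brief) arguments for those: for 1) one simply sets $u = (\langle x\rangle')^{1/v_\infty(x)}$ and checks the identity on each factor of $\overline{K}_\infty^\times = x^{\mathbb Q}\times\overline{\mathbb F}_q^\times\times U_\infty$; for 2) one first finds the exponent $e$ from the restriction of $\sgn$ to $\mathbb F_\infty^\times$, observes that $\sgn/\omega_x^e$ is trivial on $\mathbb F_\infty^\times\times(U_\infty\cap K_\infty^\times)$, and then defines $\zeta'$ as the value on any uniformiser. You should at least sketch these.

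For part 3) your argument is correct and follows exactly the paper's route: choose $a\in U_\infty\cap N\cap K^\times$ with $a\neq 1$, use $\langle a\rangle_i=a$ and $v_\infty(a)=0$ to reduce the hypothesis to $a^{n_1}=a^{n_2}$, and conclude $n_1=n_2$. The paper states this in three lines without justification; your additions (density of $K$ in $K_\infty$ to produce $a$, and the explicit torsion-freeness of $1+\pi\mathcal O_\infty$ as a $\mathbb Z_p$-module via $(1+y)^{p^m}=1+y^{p^m}$) fill in precisely the two points a reader might pause on, and are welcome.
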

\begin{proof}${}$\par
\noindent 1)   We have:
$$u=(\langle x \rangle ')^{\frac{1}{v_\infty(x)}} .$$
\noindent 2) There exists $e\in \mathbb N, $ $e\equiv 1\pmod{q-1}$ such that:
$$\forall \zeta \in \mathbb F_\infty^\times, \quad \sgn (\zeta)= \zeta^e.$$
Let $U= K_\infty^\times \cap U_\infty.$ We get:
$$\frac{\sgn(.)}{\omega_x(.)^e}\mid_{\mathbb F_\infty^\times\times U}=1.$$
Let $\pi$ be any element of $K_\infty^\times$ such that $v_\infty(\pi)=1,$  and set:
$$\zeta'= \frac{\sgn (\pi)}{\omega_x(\pi)^e} \in \overline{\mathbb F}_q^\times.$$
3) Let $a\in U_\infty \cap N\cap K^\times,$ $a\not =1.$ Then:
$$a^{n_1}=a^{n_2}.$$
Thus $n_1=n_2.$

\end{proof}
${}$\par
A map $\eta: \mathcal I(A) \rightarrow \overline{K}_\infty$ will be called \textit{admissible} if there exist  an open subgroup of finite index $N \subset K_\infty^\times,$ an element $n(\eta)\in \mathbb Z_p,$   an element $\gamma \in \overline{K}_\infty^\times,$
and a group homomorphism $\langle . \rangle : \overline{K}_\infty \rightarrow U_\infty, $ with $\langle . \rangle \mid_{U_\infty}= {\rm Id}_{U_\infty},$  such that:
$$\forall I\in \mathcal I(A), \forall \alpha \in K^\times \cap N, \quad \eta (I\alpha)= \eta (I) \langle \alpha \rangle ^{n(\eta)} \gamma^{v_\infty(\alpha)}.$$
By Lemma \ref{LemmaS2.1}, this definition does not depend on the choice of $\langle . \rangle $, $n(\eta)$ is well-defined,  and $\gamma$ is well-defined modulo $U_\infty ^{n(\eta)}.$ Note also that the product of admissible maps is an admissible map.\par
We  observe that any constant  map from  $ \mathcal I(A)$ to  $\overline{K}_\infty$ is admissible. An admissible map $\eta$ will be called algebraic if $\eta(\mathcal I(A))\subset \overline{K}.$\par
${}$ \par
Let's give a fundamental example, due to D. Goss, of such an admissible map (\cite{GOS}, section 8.2). Let $t$ be a prime of $K_\infty$ (i.e. $v_\infty (t)=1$). For $x\in K_\infty^\times,$ let's write:
$$x=t^{v_\infty(x)} \omega_t(x) \langle x \rangle _t, \quad \omega_t(x)\in \mathbb F_\infty^\times, v_\infty( \langle x \rangle _t-1) > 0.$$
Write $d_\infty= p^k \bar{d}_\infty$ with $\bar {d}_\infty\not \equiv 0\pmod{p}.$ Let $p^m$ be the biggest power of $p$ that divides $\mid {\rm Pic}(A)\mid.$ Set $l={ \rm Max}\{ m,k\}.$  We fix $t^{\frac{1}{\bar {d}_\infty}}\in \overline{K}_\infty$ a $\bar{d}_\infty$-th root of $t.$ Let:
$$T=\mathbb F_\infty((t^{\frac{1}{p^l\bar{d}_\infty}})).$$
If $I\in \mathcal I(A),$ then there exists an integer $h$ dividing $\mid {\rm Pic}(A)\mid,$ such that:
$$I^h= \alpha A, \quad \alpha \in K^\times.$$
We set:
$$[I]_t= (\langle \alpha  \rangle _t)^{\frac{1}{h}} t^{\frac{-deg I}{d_\infty}} \in T.$$
Then, $K([I], I\in \mathcal I(A))/K$ is a finite extension, $[.]_t: \mathcal I(A)\rightarrow \overline{K}_\infty^\times$ is a group homomorphism and:
$$\forall \alpha \in K^\times, \quad [\alpha A]_t=\frac{\alpha }{\omega_t(\alpha)},$$
Thus, by Lemma \ref{LemmaS2.1},  $[.]_t$ is an algebraic  admissible map.\par
${}$\par
From now in, we fix a prime $\pi$ of $K_\infty$ ($\pi \in K_\infty$, $v_\infty (\pi)=1$). We also fix a sequence $(\pi_n)_{n\geq 1}$  of elements in $ \overline{K}_\infty$ such that $\pi_1=\pi,$ and $\forall n\geq 1, $ $\pi_{n+1}^{n+1}=\pi_n.$  By our previous construction, attached to this choice of such a sequence, we have a group homomorphism  $\langle . \rangle =\langle . \rangle _\pi: \overline{K}_\infty^\times \rightarrow U_\infty,$ and a group homomorphism $\sgn = \omega_\pi: \overline{K}_\infty^\times \rightarrow \overline{\mathbb F}_q^\times.$
For $I\in \mathcal I(A),$ we set:
$$[I]= \langle I \rangle  \pi^{-\frac{\deg I}{d_\infty}}\in \pi^{\mathbb Q} \times U_\infty,$$
where:
$$\langle I \rangle =\langle \alpha  \rangle ^{\frac{1}{h}}, \quad I^h=\alpha A, \quad \alpha \in K^\times.$$
Then, $[.]: \mathcal I(A)\rightarrow  \overline{K}_\infty^\times$ is a group homomorphism such that:
$$\forall \alpha \in K^\times, \quad [\alpha A]= \frac{\alpha}{\sgn (\alpha)}.$$
Observe  that $[.]$ is an algebraic admissible  map, and $\langle . \rangle : \mathcal I(A)\rightarrow U_\infty$ is an admissible map.\par
${}$\par
Let $\eta: \mathcal I(A) \rightarrow \overline{K}_\infty$ be an admissible map. Then there exist $M(\eta)\subset K_\infty^\times$ an open subgroup of finite index, an element $n(\eta)\in \mathbb Z_p,$ an element  $\gamma_\eta\in \overline{K}_\infty^\times ,$ such that:
$$\forall I\in \mathcal I(A), \forall \alpha \in K^\times \cap M(\eta), \quad \eta(I\alpha)= \eta (I) (\frac{\alpha}{\sgn (\alpha)\pi^{v_\infty(\alpha)}})^{n(\eta)} \gamma_\eta^{v_\infty(\alpha)}.$$
Let's observe that, if $n(\eta)\in \mathbb Z,$ $\forall \alpha \in K^\times \cap M(\eta),$ we have:
$$ (\frac{\alpha}{\sgn (\alpha)\pi^{v_\infty(\alpha)}})^{n(\eta)} \gamma_\eta^{v_\infty(\alpha)}= (\frac{\alpha}{\sgn (\alpha)})^{n(\eta)} (\gamma_\eta \pi^{-n(\eta)})^{v_\infty(\alpha)}.$$
In that case, we set $K(\eta):= K(\mathbb F_\infty,\gamma_\eta \pi^{-n(\eta)}, \eta (I), I\in \mathcal I(A)).$ Let's observe that, if $\eta$ is algebraic then $K(\eta)/K$ is a finite extension.\par

 ${}$\par
 Let $(F,v_F)$ be a field which is complete with respect to a non-trivial valuation $v_F: F\rightarrow \mathbb R\cup\{+\infty\}$ and such that $F$ is a $\overline{\mathbb F}_q$-algebra.  \par
Let $n\geq 1,$ we set:
 $$\mathbb S_{F,n}=F^\times \times \mathbb Z_p^n.$$
 We view $\mathbb S_{F,n}$ as an abelian group with group action  written additively.  Let $\eta_1, \ldots, \eta_n: \mathcal I(A) \rightarrow \overline{K}_\infty^\times$ be $n$ admissible maps (note that for $i=1, \ldots, n,$  $\forall I\in \mathcal I(A), $ $\eta_i(I)\not =0$).
Observe that for $i=1, \ldots, n,$ $K_\infty(\langle \eta_i \rangle ):= K_\infty( \langle \gamma_{\eta_i} \rangle , \langle \eta_i(I) \rangle , I\in \mathcal I(A))$ is a finite extension of $K_\infty.$
For $i=1, \ldots, n,$ let $\sigma_i: K_\infty (\langle \eta_i \rangle )\hookrightarrow F$ be a continuous $\mathbb F_p$-algebra homomorphism.
For $u=(x; z_1, \ldots, z_n)\in \mathbb S_{F,n}$ and $I\in \mathcal I(A),$  we set:
 $$I_{\underline {\sigma}, \underline{\eta}}^{\underline{z}}=  \prod_{i=1}^n \sigma_i(\langle \eta_i(I) \rangle ^{z_i})\in F^\times,$$
 $$I_{\underline {\sigma}, \underline{\eta}}^{u}=  x^{\deg I}\prod_{i=1}^n \sigma_i(\langle \eta_i(I) \rangle ^{z_i})\in F^\times.$$
 Observe that for $I\in \mathcal I(A)$,  $u,u' \in \mathbb S_{F,n}$ and $\alpha \in K^\times\cap M(\eta_1)\cap \cdots \cap M(\eta_n)$, we have:
 $$I_{\underline{\sigma}, \underline{\eta}}^{u+u'}=I_{\underline{\sigma}, \underline{\eta}}^{u}I_{\underline{\sigma}, \underline{\eta}}^{u'},$$
 $$(I\alpha)_{\underline{\sigma}, \underline{\eta}}^{u}= I_{\underline{\sigma}, \underline{\eta}}^{u} (\alpha A)_{\underline{\sigma},\underline{\eta}}^{u},$$
 where:
 $$(\alpha A)_{\underline{\sigma}, \underline{\eta}}^{u}:=(x^{-d_\infty}\prod_{i=1}^n \sigma_i(\langle \gamma_{\eta_i} \rangle ^{z_i}) )^{v_\infty(\alpha)}\prod_{i=1}^n \sigma_i(\langle \alpha \rangle ^{n(\eta_i)z_i})\in F^\times.$$

\begin{remark} \label{Remark S2.1}
Let $N\subset K_\infty^\times$ be an open subgroup of finite index, and let's set:
$$\mathcal P(N)=\{ xA, x\in K^\times \cap N\}.$$
Then $\frac{\mathcal I(A)}{\mathcal P(N)}$ is a finite abelian group. Let $\chi: \mathcal I(A)\rightarrow \overline{K}_\infty$ be a map such that:
$$\forall I\in \mathcal I(A), \forall J\in \mathcal P(N), \quad \chi (IJ)= \chi (I).$$
Observe that $\chi$ is an admissible map.
Let $\gamma' \in \overline{K}_\infty^\times,$ and let $z\in \mathbb Z_p.$
Then $$\eta= \chi(.)\langle . \rangle ^z (\gamma') ^{\deg (.)}$$ is an admissible map with $M(\eta)=N,$  $n(\eta)=z, $ and $\gamma_\eta =(\gamma')^{-d_\infty}$.\par
Reciprocally, let $\eta$ be an admissible map. Select $\gamma'\in \overline{K}_\infty^\times$ such that $(\gamma')^{d_\infty}= \frac{1}{\gamma_\eta}.$ Set $\chi (.)=\frac{\eta(.)}{\langle . \rangle ^{n(\eta)}(\gamma')^{\deg(.)}}.$ Then $\chi: \mathcal I(A) \rightarrow \overline{K}_\infty$ is such that:
$$\forall I\in \mathcal I(A), \forall J\in \mathcal P(M(\eta)), \quad \chi (IJ)= \chi (I).$$
\end{remark}
\begin{remark}\label{RemarkS2.2} Let $T/K_\infty$ be a finite extension, let $\mathbb F_T$ be the residue field of $T,$ and let $\pi'$ be a prime of $T.$ Let $\sigma : T\hookrightarrow F$ be a continuous $\mathbb F_p$-algebra homomorphism.
Let $\varphi \in {\rm Gal} (T/\mathbb F_p((\pi')))\simeq {\rm Gal}(\mathbb F_T/\mathbb F_p),$ such that:
$$\forall \zeta \in \mathbb F_T, \quad \sigma (\zeta) =\varphi(\zeta).$$
Let $y=\sigma (\pi').$ Then $v_F(y)>0,$ and:
$$\forall a\in T, \quad \sigma (a) =\varphi (a)\mid_{\pi'=y}.$$
Thus the choice of a continuous $\mathbb F_p$-algebra homomorphism $\sigma: T\hookrightarrow F$ is equivalent to the choice of an element $y\in F^\times,$ $v_F(y)>0,$ and   $\varphi \in {\rm Gal}(\mathbb F_T/\mathbb F_p).$  \end{remark}


\section{A Key Lemma}\label{KeyLemma}${}$\par
 Let $E/K$ be a finite extension, $E\subset \overline{K},$ let $g_E$ be the genus of $E,$  and let $O_E$ be the integral closure of $A$ in $E.$ Let $\mathcal I(O_E)$ be the group of fractional ideals of $O_E,$ and let $\mathcal P(O_E)=\{ xO_E, x\in  E^\times\}.$ Let $N_{E/K}: \mathcal I(O_E)\rightarrow \mathcal I(A)$ be the group homomorphism such that, if $\frak P$ is a maximal ideal of $O_E$ and if $P=\frak P\cap A$ is the corresponding maximal ideal of $A,$ then:
 $$N_{E/K} (\frak P)=P^{[\frac{O_E}{\frak P}:\frac{A}{P}]}.$$
 Note that, if $\frak I=xO_E, x\in E^\times,$ then:
 $$N_{E/K} (\frak I) =N_{E/K} (x)A,$$
 where $N_{E/K}: E\rightarrow K$ denotes also  the usual norm map. Let $\infty _1, \ldots , \infty_r$ be the places of $E$ above $\infty.$ For $i=1, \ldots, r,$ let  $f_i$ be the residual degree of $\infty_i/\infty.$  Let $v_1, \ldots, v_r$ be the valuations on $E$ associated to these places. We denote by $E_{v_i}$ the $v_i$-adic completion of $E,$ $i=1, \ldots, r.$ \par
 Let $\mathcal B\subset O_E,$ $\mathcal B\in \mathcal I(O_E).$ We denote by $\mathcal I(\mathcal B)$ the group of fractional ideals of $O_E$ which are relatively prime to $\mathcal B.$ For $i=1, \ldots, r,$ let $N_i$ be an open subgroup of finite index of $E_{v_i}^\times,$ and let $m_i\geq 0$ be the least integer such that:
 $$\{ x\in E_{v_i}^\times , v_i(x-1)\geq m_i\}\subset N_i.$$
 We set $N=\prod_{i=1}^rN_i,$ and:
 $$\mathcal P(\mathcal B, N)= \{ xO_E\in \mathcal I(\mathcal B), x\in E^\times, x\equiv 1\pmod{\mathcal B}, x\in N_i,  i=1, \ldots, r\}.$$
Observe that $\frac{\mathcal I(\mathcal B)}{\mathcal P(\mathcal B, N)}$ is a finite abelian group. Let:
$$d(\mathcal B)= \dim_{\mathbb F_q} \frac{O_E}{\mathcal B},$$
$$d_\infty(N)= \sum_{i=1}^r d_\infty f_i(m_i+1).$$
More generally, let $\mathcal S$ be a finite set, possibly empty,  of maximal ideals of $O_E$ which are relatively prime to $\mathcal B.$ We denote by  $\mathcal I_{\mathcal S}(\mathcal B)$ the group of fractional ideals of $\mathcal I(O_E)$   which are relatively prime to $\mathcal B\prod_{\frak P\in \mathcal S} \frak P.$ We also set $\mathcal P_{\mathcal S}(\mathcal B, N)= \mathcal P(\mathcal B, N)\cap \mathcal I_{\mathcal S}(\mathcal B).$ Observe that we have a natural group isomorphism:
$$ \frac{\mathcal I_{\mathcal S}(\mathcal B)}{\mathcal P_{\mathcal S}(\mathcal B, N)}\simeq \frac{\mathcal I(\mathcal B)}{\mathcal P(\mathcal B, N)} .$$
We set:
$$d_{\mathcal S}(\mathcal B)=\dim_{\mathbb F_q}\frac{O_E}{\mathcal B\prod_{\frak P\in \mathcal S} \frak P}.$$
$ {}$\par
We recall a basic result (\cite{GOS}, Lemma 8.8.1):
 \begin{lemma}\label{LemmaS3.1}
 Let $ r\in \mathbb N ,$ and let $W$ be a  finite dimensional $\mathbb F_p$-vector space. For $i=1, \ldots, r,$ let $f_i: W\rightarrow F$ be an $\mathbb F_p$-linear map. If $\dim_{\mathbb F_p} W  > \frac{r}{p-1},$ then:
 $$\forall x_1, \ldots, x_r \in F, \qquad \sum_{w\in W} (x_1+f_1(w))\cdots (x_r+f_r(w))=0.$$
 \end{lemma}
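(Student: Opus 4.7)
The plan is to reduce the problem to a standard character sum identity on $\mathbb{F}_p^n$. Fix an $\mathbb{F}_p$-basis $e_1, \ldots, e_n$ of $W$, where $n = \dim_{\mathbb{F}_p} W$, and write a general element of $W$ as $w = \sum_{j=1}^n c_j e_j$ with $c_j \in \mathbb{F}_p$. By $\mathbb{F}_p$-linearity, $f_i(w) = \sum_{j=1}^n c_j f_i(e_j)$, so the factor $x_i + f_i(w)$ is a polynomial of total degree at most $1$ in the $c_j$'s with coefficients in $F$. Therefore
\[
\prod_{i=1}^r (x_i + f_i(w)) = Q(c_1, \ldots, c_n)
\]
for some $Q \in F[X_1, \ldots, X_n]$ of total degree at most $r$, and
\[
\sum_{w \in W} \prod_{i=1}^r (x_i + f_i(w)) = \sum_{(c_1, \ldots, c_n) \in \mathbb{F}_p^n} Q(c_1, \ldots, c_n).
\]

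By $F$-linearity it suffices to show that $\sum_{(c_1, \ldots, c_n) \in \mathbb{F}_p^n} c_1^{a_1} \cdots c_n^{a_n} = 0$ for every monomial with $a_1 + \cdots + a_n \leq r$. Since the sum factors as $\prod_{j=1}^n \sum_{c \in \mathbb{F}_p} c^{a_j}$, the plan reduces to the elementary identity
\[
\sum_{c \in \mathbb{F}_p} c^{a} = \begin{cases} -1 & \text{if } a > 0 \text{ and } (p-1) \mid a, \\ 0 & \text{otherwise,}\end{cases}
\]
which follows either from $\sum_{c \in \mathbb{F}_p} 1 = p = 0$ in $F$ (case $a = 0$), or from the standard fact that the sum of a nontrivial character of $\mathbb{F}_p^\times$ vanishes (case $0 < a$, $(p-1) \nmid a$).

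To conclude, I would argue that the strict inequality $r < n(p-1)$ from the hypothesis forces at least one $a_j$ in any monomial of total degree $\leq r$ to satisfy $a_j < p - 1$. For that index either $a_j = 0$, making the corresponding factor equal to $p = 0$ in $F$, or $0 < a_j < p-1$, making the factor vanish by the character sum identity. In either case the full product is zero, so the monomial contributes nothing, and summing over monomials yields the desired vanishing.

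There is no substantive obstacle; the only subtle point is making sure the degree bound and the strict inequality $\dim_{\mathbb{F}_p} W > r/(p-1)$ align to force some exponent strictly below $p-1$, which is precisely what the hypothesis provides.
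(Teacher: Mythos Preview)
Your proof is correct and follows essentially the same approach as the paper's own proof: both choose an $\mathbb{F}_p$-basis of $W$, expand the product into monomials in the coordinates of total degree at most $r$, factor the resulting sum over $\mathbb{F}_p^n$ into one-variable power sums $\sum_{c\in\mathbb{F}_p} c^{a_j}$, and then use the standard identity on these sums together with the inequality $r<n(p-1)$ to force at least one factor to vanish. The only cosmetic difference is that the paper phrases the conclusion contrapositively (a nonvanishing monomial sum would require every exponent to be a positive multiple of $p-1$, hence total degree at least $n(p-1)>r$), whereas you argue directly that some exponent must lie in $\{0,1,\dots,p-2\}$.
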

 \begin{proof} We recall its proof for the convenience of the reader. First, note that for  $k, m_1, \ldots m_k\in \mathbb N ,$ $k\geq 1,$ then :
 $$\sum_{\zeta_1, \ldots , \zeta_k \in \mathbb F_p}\zeta_1^{m_1}
\cdots \zeta_k^{m_k}\not =0 \Leftrightarrow \forall i\in \{1, \ldots k\}, \quad m_i \equiv 0\pmod{p-1}\, {\rm and}\, m_i\geq 1 .$$
Thus, in particular, if $\sum_{\zeta_1, \ldots , \zeta_k \in \mathbb F_p}\zeta_1^{m_1}\cdots \zeta_k^{m_k}\not =0 $ then $m_1+\cdots +m_k \geq k(p-1).$ Let $\alpha =\dim_{\mathbb F_p} W.$ Let's assume that $\alpha >\frac{r}{p-1}.$  Let $$S= \sum_{w\in W} (x_1+f_1(w))\cdots (x_r+f_r(w)).$$  Let's set:
$$W=\oplus_{j=1}^{\alpha} \mathbb F_p e_j.$$  Then:
 $$S= \sum_{\zeta_1, \ldots , \zeta_\alpha  \in \mathbb F_p}\prod_{j=1}^r (x_j+\zeta_1 f_j(e_1)+\cdots + \zeta_\alpha f_j(e_\alpha)).$$
When we develop the above product and sum over $\zeta_1,\ldots, \zeta_\alpha \in \mathbb F_p,$ the sums that appear are of the form:
$$\sum_{ \zeta_1, \ldots , \zeta_\alpha  \in \mathbb F_p}\zeta_1^{i_1}\cdots \zeta_{\alpha}^{i_\alpha},$$
with $i_1, \ldots , i_\alpha \in \mathbb N,$ and $i_1+\cdots +i_\alpha\leq r.$ Since $r<(p-1)\alpha,$ these sums vanish and therefore $S=0.$
 \end{proof}
 Let $s\geq 0$ be an integer. For $i=1, \ldots, s,$ let $\sgn_i:K_\infty^\times \rightarrow \overline{\mathbb F}_q^\times$ be a group homomorphism such that $\forall \zeta \in \mathbb F_q^\times, \sgn_i(\zeta)=\zeta.$ Observe that:
$$\forall i\in \{1, \ldots, s\}, \forall x\in K_\infty^\times \cap U_\infty, \quad \sgn_i(x)=1.$$
For $i=1, \ldots, s,$  let $\rho_i: K(\sgn_i(K^\times))\hookrightarrow F$ be an $\mathbb F_p$-algebra homomorphism.\par
Lemma \ref{LemmaS3.1} combined with the Riemann-Roch Theorem yields the   following result which will be crucial in the sequel.

\begin{lemma}\label{LemmaS3.2}
Let $\eta$ be a generator of the cyclic group  $\mathbb F_\infty^\times.$ Let $\frak I$ be a non-zero ideal of $O_E$ with  $\frak I\in \mathcal I_{\mathcal S}(\mathcal B).$ Let $\mathbb F_E$ be the algebraic closure of $\mathbb F_q$ in $E.$  Let $k$ be an integer, $1\leq k\leq \frac{q^{d_\infty}-1}{q-1}.$  Let
\begin{align*}
S_{m, \mathcal S}(\frak I, \mathcal B, N,k)=\{ &aO_E\in \mathcal P_{\mathcal S}(\mathcal B, N), \sgn(N_{E/K}(a))\equiv \eta^k \pmod{\mathbb F_q^\times}; \\
 & \qquad a\frak I\subset O_E, \deg (N_{E/K}(a\frak I))=m \}.
\end{align*}
If $$m \geq  2g_E [\mathbb F_E:\mathbb F_q]+d_\infty(N)+d_{\mathcal S}(\mathcal B)+\frac{s [E:K]_{sep}}{(p-1)[\mathbb F_q:\mathbb F_p]}$$  where $[E:K]_{sep}$ is the separable degree of the finite extension $E/K$, then we have:
$$ \sum_{aO_E\in S_{m, \mathcal S}(\frak I, \mathcal B, N,k)}\rho_1(\frac{N_{E/K}(a)}{\sgn_1(N_{E/K}(a))})\cdots \rho_s(\frac{N_{E/K}(a)}{\sgn_s(N_{E/K}(a))})=0.$$ ${}$\par
In particular, let's set:
\begin{align*}
S_{m, \mathcal S}(\frak I, \mathcal B, N)&=\cup_{k=1}^{\frac{q^{d_\infty}-1}{q-1}}S_{m, \mathcal S}(\frak I, \mathcal B, N,k)\\
&=\{ aO_E\in \mathcal P_{\mathcal S}(\mathcal B, N), a\frak I\subset O_E, \deg (N_{E/K}(a\frak I))=m\}.
\end{align*}
If $$m \geq  2g_E [\mathbb F_E:\mathbb F_q]+d_\infty(N)+d_{\mathcal S}(\mathcal B)+\frac{s [E:K]_{sep}}{(p-1)[\mathbb F_q:\mathbb F_p]},$$ then we have:
$$ \sum_{aO_E\in S_{m, \mathcal S}(\frak I, \mathcal B, N)}\rho_1(\frac{N_{E/K}(a)}{\sgn_1(N_{E/K}(a))})\cdots \rho_s(\frac{N_{E/K}(a)}{\sgn_s(N_{E/K}(a))})=0.$$
 \end{lemma}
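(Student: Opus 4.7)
The plan is to parametrize the set $S_{m, \mathcal S}(\mathfrak I, \mathcal B, N, k)$ by a suitable Riemann--Roch space in $E$ and reduce the desired vanishing to Lemma \ref{LemmaS3.1}. I would begin by assuming the set is non-empty and fixing a representative $a_0 \in E^\times \cap (1 + \mathcal B) \cap N$ with $a_0 O_E \in S_{m, \mathcal S}(\mathfrak I, \mathcal B, N, k)$. Let $D$ be the divisor on $E$ given by
$$D = \sum_{\mathfrak P \mid \mathcal B} v_{\mathfrak P}(\mathcal B)\, \mathfrak P + \sum_{\mathfrak P \in \mathcal S} \mathfrak P - \sum_{\mathfrak P \text{ finite}} v_{\mathfrak P}(\mathfrak I)\, \mathfrak P + \sum_{j=1}^r (v_{\infty_j}(a_0) + m_j + 1)\, \infty_j,$$
and set $W = L(-D) \subset E$. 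Using the fact that principal divisors have degree zero (so $\sum_j d_\infty f_j v_{\infty_j}(a_0) = d(\mathfrak I) - m$) one finds $\deg(-D) = m - d_{\mathcal S}(\mathcal B) - d_\infty(N)$. The hypothesis on $m$ yields $\deg(-D) > (2g_E - 2)[\mathbb F_E : \mathbb F_q]$, so Riemann--Roch gives $\dim_{\mathbb F_q} W = \deg(-D) + [\mathbb F_E : \mathbb F_q](1 - g_E)$, whence $\dim_{\mathbb F_p} W > \frac{s[E:K]_{sep}}{p-1}$ with room to spare.

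For any $b \in W$, the strict inequalities $v_{\infty_j}(b) > v_{\infty_j}(a_0) + m_j$ and $v_{\mathfrak P}(b) \geq 1$ at $\mathfrak P \in \mathcal S$ ensure that $a_0 + b$ lies in $(1+\mathcal B) \cap N$, that $(a_0+b)\mathfrak I \subset O_E$, and that $v_{\infty_j}(a_0+b) = v_{\infty_j}(a_0)$ at every infinite place. The last property forces $N_{E/K}(1 + b/a_0)$ to be a $1$-unit of $K_\infty^\times$, whence $\sgn(N_{E/K}(a_0+b)) = \sgn(N_{E/K}(a_0)) \equiv \eta^k \pmod{\mathbb F_q^\times}$, so $(a_0+b) O_E$ lies in $S_{m, \mathcal S}(\mathfrak I, \mathcal B, N, k)$. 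The key technical point is then to show that the map $b \mapsto (a_0+b) O_E$ from $W$ to $S_{m, \mathcal S}(\mathfrak I, \mathcal B, N, k)$ is a bijection. For injectivity: if two elements of $W$ give the same ideal, the resulting unit $u \in O_E^\times$ must satisfy $v_{\infty_j}(u-1) \geq m_j + 1$ for every $j$ together with $u \equiv 1$ modulo $\mathcal B \prod_{\mathfrak P \in \mathcal S} \mathfrak P$; Dirichlet's unit theorem combined with these constraints forces $u = 1$. For surjectivity: given $\mathfrak a \in S_{m, \mathcal S}(\ldots)$ with representative $a_\mathfrak a \in \mathcal P(\mathcal B, N)$, one multiplies $a_\mathfrak a$ by a suitable element of $U(\mathcal B, N)$, using that this group has full Dirichlet rank at infinity together with strong approximation, to arrange $a_\mathfrak a u - a_0 \in W$.

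Granting the bijection, the ideal sum equals $C \sum_{b \in W} \prod_{i=1}^s \rho_i(N_{E/K}(a_0+b))$, where $C = \prod_i \rho_i(\sgn_i(N_{E/K}(a_0))^{-1})$ is a constant (we used that $\sgn_i(N_{E/K}(a_0+b))$ is constant in $b$). Writing $p^e = [E:K]_i$ and letting $\tau$ run over the $[E:K]_{sep}$ separable $K$-embeddings of $E$ into $\overline K$, I extend each $\rho_i$ to a homomorphism $\overline{\rho}_i: \overline K \to \overline F$. The formula $N_{E/K}(x) = \bigl(\prod_\tau \tau(x)\bigr)^{p^e}$ combined with the characteristic-$p$ identity $(X+Y)^{p^e} = X^{p^e} + Y^{p^e}$ gives
$$\rho_i(N_{E/K}(a_0 + b)) = \prod_\tau \bigl(\overline{\rho}_i(\tau(a_0))^{p^e} + \overline{\rho}_i(\tau(b))^{p^e}\bigr),$$
so the summand is a product of $s \cdot [E:K]_{sep}$ affine $\mathbb F_p$-linear functions of $b \in E$. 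Since $\dim_{\mathbb F_p} W > \frac{s[E:K]_{sep}}{p-1}$, Lemma \ref{LemmaS3.1} makes the sum vanish, proving the first claim; the second claim follows from the first by summing over $k$. The main obstacle I expect is the bijectivity of $b \mapsto (a_0+b)O_E$, where one must carefully balance Dirichlet's unit theorem, strong approximation, and the congruence/valuation data built into $\mathcal P_{\mathcal S}(\mathcal B, N)$.
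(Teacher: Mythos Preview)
Your Riemann--Roch setup, the dimension count, the injectivity argument, and the reduction of $\sum_{b\in W}\prod_i\rho_i(N_{E/K}(a_0+b))$ to Lemma~\ref{LemmaS3.1} via the factorisation of the norm are all correct and match the paper. The genuine gap is exactly the step you flagged: the map $b\mapsto (a_0+b)O_E$ from $W$ to $S_{m,\mathcal S}(\mathfrak I,\mathcal B,N,k)$ is \emph{not} surjective in general, and no amount of unit--adjustment or strong approximation will repair this. The congruence subgroup $U(\mathcal B,N)\subset O_E^\times$ is discrete in $\prod_j E_{v_j}^\times$, so you cannot in general move a representative $a_{\mathfrak a}$ close enough to $a_0$ at every infinite place. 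Concretely, take $E=K$, $A=\mathbb F_q[\theta]$, $\mathfrak I=\mathcal B=A$, $\mathcal S=\emptyset$, and $N=\{x\in K_\infty^\times: v_\infty(x)\equiv 0\pmod 2\}$; then $m_1=1$, $d_\infty(N)=2$, and for even $m$ one has $|S_m|=q^m$ while $|W|=q^{m-1}$, so the map cannot be onto. Your proposed surjectivity argument would here require finding $u\in A^\times=\mathbb F_q^\times$ with $v_\infty(cu-a_0)\ge v_\infty(a_0)+2$, i.e.\ matching the top \emph{two} coefficients of $c$ and $a_0$, which a scalar cannot do.

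The fix, and this is what the paper does, is not to insist on a bijection at all. One observes that for any $cO_E\in S_{m,\mathcal S}(\mathfrak I,\mathcal B,N,k)$ the analogous space $V(c)$ has the \emph{same} $\mathbb F_p$-dimension as your $W$ (the degree of the divisor depends only on $m$, not on the individual $v_{\infty_j}(c)$), and that two translates $(c+V(c))O_E$ and $(c'+V(c'))O_E$ are either equal or disjoint. Hence $S_{m,\mathcal S}(\mathfrak I,\mathcal B,N,k)$ is a finite disjoint union $\bigsqcup_{i=1}^{l}(a_i+V(a_i))O_E$, and the total sum breaks as $\sum_{i=1}^{l}C_i\sum_{b\in V(a_i)}\prod_j\rho_j(N_{E/K}(a_i+b))$; each inner sum vanishes by your Lemma~\ref{LemmaS3.1} argument. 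In the toy example above this partition is by the coefficient of $\theta^{m-1}$, giving $q$ cosets of size $q^{m-1}$. Replacing your single coset by this partition completes the proof with no further changes.
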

 \begin{proof}
The proof of the Lemma is based on the arguments used in the proof of  \cite{GOS}, Theorem 8.9.2. We give a detailed proof for the convenience of the reader.  Since there is a finite number of ideals $J$ of $A$ such that $\deg J=m,$ and, given such an ideal $J,$ there is a finite number of ideals $\frak J$ in $O_E$ such that $N_{E/K} (\frak J)=J,$ we deduce that $S_{m, \mathcal S}(\frak I, \mathcal B, N,k)$ is a finite set, possibly empty.  Let's fix $aO_E\in S_{m, \mathcal S}(\frak I,\mathcal B, N,k)$ (so we assume that $m\geq 0$ is such that $S_{m, \mathcal S}(\frak I, \mathcal B,N,k)\not =\emptyset$). Let $V(a)$  be the set of elements $b\in E$ such that

i) $b\frak I\subset O_E,$

ii) $v_i(b)\geq v_i(a)+m_i+1$, for $i=1, \ldots, r,$

iii) ${\rm ord}_{\frak P} (b)\geq {\rm ord}_{\frak P}(\mathcal B),$ for all $\frak P$ dividing $\mathcal B,$

iv) ${\rm ord}_{\frak P} (b)\geq 1,$  for all $\frak P\in \mathcal S$.

Observe that:
 $$\deg N_{E/K} (a\frak I)= \deg N_{E/K} (\frak I)-d_\infty \sum_{i=1}^r  f_iv_i(a).$$
 But:
 $$\deg N_{E/K} (\frak I)= [\mathbb F_E:\mathbb F_q] \dim_{\mathbb F_E }\frac {O_E}{\frak I},$$
 therefore:
 $$\frac{-1}{[\mathbb F_E:\mathbb F_q]}\sum_{i=1}^r d_\infty f_i v_i (a) =\frac{m}{[\mathbb F_E:\mathbb F_q]}-\dim_{\mathbb F_E }\frac {O_E}{\frak I}.$$

We now assume that ${m}-{d_\infty(N)-d_{\mathcal S}(\mathcal B)}\geq [\mathbb F_E:\mathbb F_q](2g_E-1).$ Then, by the Riemann-Roch Theorem, $V(a)$ is a finite $\mathbb F_E$-vector space of dimension $$\dim V(a)=\frac{m}{[\mathbb F_E:\mathbb F_q]}+1-g_E-\frac{d_\infty (N)+d_{\mathcal S}(\mathcal B)}{[\mathbb F_E:\mathbb F_q]}.$$   We have (see \cite{GOS}, Lemma 8.9.3):
 $$ (a+V(a))O_E\subset S_{m, \mathcal S}(\frak I, \mathcal B, N,k),$$
 $$\forall b, b'\in V(a), \quad (a+b)O_E= (a+b')O_E\Leftrightarrow b=b'.$$
 If $\sgn': K_\infty^\times \rightarrow \overline{\mathbb F_q}^\times$ is a group homomorphism, we have:
 $$ \forall b\in V(a), \quad  \sgn'(N_{E/K}(a+b))=\sgn'(N_{E/K}(a)).$$
 Let $cO_E\in S_{m, \mathcal S}(\frak I, \mathcal B, N,k).$  Then:
 $$(c+V(c))O_E\cap (a+V(a))O_E\not = \emptyset \Leftrightarrow cO_E\in (a+V(a))O_E.$$
 Thus, if $S_{m, \mathcal S}(\frak I, \mathcal B,N,k)\not =\emptyset,$ let's select  $a_1O_E, \ldots, a_lO_E\in S_{m, \mathcal S}(\frak I,\mathcal B, N,k)$ such that $S_{m, \mathcal S}(\frak I, \mathcal B, N,k)$ is the disjoint union of the $(a_i+V(a_i))O_E.$ We have:
 \begin{align*}\sum_{cO_E\in S_{m, \mathcal S}(\frak I, \mathcal B, N,k)}\rho_1(\frac{N_{E/K}(c)}{\sgn_1(N_{E/K}(c))})\cdots \rho_s(\frac{N_{E/K}(c)}{\sgn_s(N_{E/K}(c))})= \cr
\sum_{i=1}^l\sum_{b\in V(a_i)}\rho_1(\frac{N_{E/K}(a_i+b)}{\sgn_1(N_{E/K}(a_i))})\cdots \rho_s(\frac{N_{E/K}(a_i+b)}{\sgn_s(N_{E/K}(a_i))}).
\end{align*}
 Let's fix $1\leq i\leq l,$ and let $$S= \sum_{b\in V(a_i)}\rho_1(N_{E/K}(a_i+b))\cdots \rho_s(N_{E/K}(a_i+b)).$$ Let $E_1/K$ consist of the elements of $E$ which are separable over $K.$ Let $p^{l'}=[E:E_1].$ Then:
 $$S=\sum_{b\in V(a_i)}\rho_1(N_{E_1/K}(a_i^{p^{l'}}+b^{p^{l'}}))\cdots \rho_s(N_{E_1/K}(a_i^{p^{l'}}+b^{p^{l'}})) .$$
 Therefore, we can assume that $E/K$ is a finite separable extension.   Let $\overline{F}$ be an algebraic closure of $F.$  Then, for $i=1, \ldots, s,$  $\rho_i$ extends to a morphism $\rho_i: \overline{K}\rightarrow \overline{F}.$  Let $\sigma_l:E\rightarrow \overline{K}$ be the distincts $K$-embeddings of $E$ in $\overline{K},$ $l=1, \ldots, [E:K].$ Then:
$$\forall i=1, \ldots, s, \quad \rho_i(N_{E/K} (a_i+b))=\prod_{l=1}^{[E:K]}\rho_i(\sigma_l (a_i+b)).$$
By Lemma \ref{LemmaS3.1}, if  $[\mathbb F_q:\mathbb F_p](m +[\mathbb F_E:\mathbb F_q] (1-g_E)-d_\infty(N)-d_{\mathcal S}(\mathcal B)) > \frac{s [E:K]}{p-1},$ we have $S=0.$
 \end{proof}
Let $s\geq 0$ be an integer. For $i=1, \ldots, s,$ let $\psi_i: \mathcal I(A)\rightarrow \overline{K}_\infty$ be a map such that there exists  $M(\psi_i)\subset K_\infty^\times$  an open subgroup of finite index with the following property:
$$\forall I\in \mathcal I(A), \forall \alpha \in K^\times \cap M(\psi_i), \quad \psi_i (I\alpha )= \psi_i(I) \frac{\alpha}{\sgn_i (\alpha)}.$$
 For $i=1, \ldots, s,$ let $K(\psi_i)=K(\sgn_i(K^\times), \psi_i(I), I\in \mathcal I(A))\subset \overline{K}_\infty.$ 
 Let $\chi: \mathcal I(\mathcal B)\rightarrow F$ be a map such that:
 $$\forall \frak I\in I(\mathcal B), \forall \frak J\in \mathcal P(\mathcal B, N), \quad \chi (\frak I \frak J)= \chi (\frak I) .$$
 For $j=1, \ldots, r,$ let $N'_j =N_j\cap N_{E_{v_j}/K_\infty}^{-1}(M(\psi_1)\cap \ldots \cap M(\psi_s)),$ then $N'_j$ is an open subgroup of finite index of $E_{v_j}^\times.$ Let $N'= \prod_{j=1}^r N'_j.$
 \begin{corollary}\label{CorollaryS3.1} Let $s\geq 0$ be an integer and let, for $i=1, \ldots, s,$ $\rho_i:K(\psi_i)\hookrightarrow F$ be an $\mathbb F_p$-algebra homomorphism.  If $$m \geq  2g_E [\mathbb F_E:\mathbb F_q]+d_\infty(N')+d_{\mathcal S}(\mathcal B)+\frac{s [E:K]_{sep}}{(p-1)[\mathbb F_q:\mathbb F_p]},$$ then:
 $$ \sum_{\substack{\frak I\subset O_E, \frak I\in \mathcal I_{\mathcal S}(\mathcal B) \\
                \deg N_{E/K}(\frak I)=m}}
        \chi (\frak I)\, \rho_1(\psi_1(N_{E/K}(\frak I)))\cdots \rho_s(\psi_s(N_{E/K}(\frak I)))=0.$$
 \end{corollary}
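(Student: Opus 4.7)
The strategy is to partition the sum over $\frak I$ according to the finite quotient $\mathcal{I}_{\mathcal{S}}(\mathcal{B})/\mathcal{P}_{\mathcal{S}}(\mathcal{B}, N')$ and to reduce each class's contribution to an instance of Lemma \ref{LemmaS3.2}. The whole point of working with the smaller group $N'$ rather than $N$ is that elements of $\bigcap_j N'_j$ have their $K_\infty$-norm landing inside every $M(\psi_i)$; this is precisely the condition that lets us apply the transformation rule of $\psi_i$ when moving within a ray class.

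Concretely, I would fix representatives $\frak I_1, \dots, \frak I_t$ of $\mathcal{I}_{\mathcal{S}}(\mathcal{B})/\mathcal{P}_{\mathcal{S}}(\mathcal{B}, N')$ and write each $\frak I$ in the sum uniquely as $\frak I = a \frak I_j$ with $aO_E \in \mathcal{P}_{\mathcal{S}}(\mathcal{B}, N')$. The conditions $\frak I \subset O_E$ and $\deg N_{E/K}(\frak I) = m$ then translate exactly into $aO_E \in S_{m, \mathcal{S}}(\frak I_j, \mathcal{B}, N')$ in the notation of Lemma \ref{LemmaS3.2}. The invariance of $\chi$ gives $\chi(\frak I) = \chi(\frak I_j)$, and since $N_{E/K}(a) = \prod_j N_{E_{v_j}/K_\infty}(a) \in M(\psi_i)$ by construction of $N'$, the defining relation of $\psi_i$ yields
$$\psi_i(N_{E/K}(\frak I)) = \psi_i(N_{E/K}(\frak I_j)) \cdot \frac{N_{E/K}(a)}{\sgn_i(N_{E/K}(a))}.$$
Collecting these factorizations, the whole sum becomes
$$\sum_{j=1}^t \chi(\frak I_j) \prod_{i=1}^s \rho_i(\psi_i(N_{E/K}(\frak I_j))) \cdot \sum_{aO_E \in S_{m, \mathcal{S}}(\frak I_j, \mathcal{B}, N')} \prod_{i=1}^s \rho_i\!\left(\frac{N_{E/K}(a)}{\sgn_i(N_{E/K}(a))}\right),$$
and the hypothesis on $m$ is exactly what is required to apply Lemma \ref{LemmaS3.2} to each inner sum (with $N$ there replaced by $N'$), so each inner sum vanishes and the corollary follows.

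The main point requiring care is the bookkeeping around $N'$: one has to verify that each $N'_j = N_j \cap N_{E_{v_j}/K_\infty}^{-1}\!\left(\bigcap_i M(\psi_i)\right)$ is still open of finite index in $E_{v_j}^\times$ (which follows from continuity of the local norm together with the openness of each $M(\psi_i)$), and that the integer $d_\infty(N')$ attached to the new datum is the one appearing in the stated degree bound. A secondary compatibility to check is that $\psi_i(N_{E/K}(\frak I_j))$ and the "sign-normalized" quotients $N_{E/K}(a)/\sgn_i(N_{E/K}(a))$ actually lie in $K(\psi_i) = K(\sgn_i(K^\times), \psi_i(I), I \in \mathcal{I}(A))$, so that their images under $\rho_i$ make sense; both follow directly from the definition of $K(\psi_i)$. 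With these points in place, the rest is a routine reorganization of the sum along ray classes.
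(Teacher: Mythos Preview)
Your proposal is correct and follows essentially the same approach as the paper: partition the sum by ray classes modulo $\mathcal{P}_{\mathcal{S}}(\mathcal{B},N')$, use the transformation rule of each $\psi_i$ (enabled by the choice of $N'$) together with the $\mathcal{P}(\mathcal{B},N)$-invariance of $\chi$ to factor out the class contribution, and then apply Lemma~\ref{LemmaS3.2} to each inner sum. The additional bookkeeping you flag (openness of $N'_j$, membership in $K(\psi_i)$) is left implicit in the paper but is handled correctly in your write-up.
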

  \begin{proof} For $m\geq 0,$ let's set:
 $$U_m=\sum_{\substack{\frak I\subset O_E, \frak I\in \mathcal I_{\mathcal S}(\mathcal B) \\
                \deg N_{E/K}(\frak I)=m}}
        \chi (\frak I)\, \rho_1(\psi_1(N_{E/K}(\frak I)))\cdots \rho_s(\psi_s(N_{E/K}(\frak I))).$$
 Let $h=\mid \frac{\mathcal I(\mathcal B)}{\mathcal P(\mathcal B, N')} \mid.$ Select $\frak I_1, \ldots, \frak I_h\in  \mathcal I_{\mathcal S}(\mathcal B)$, $\frak I_1, \ldots ,\frak I_h \subset O_E,$  a system of representatives of $\frac{\mathcal I(\mathcal B)}{\mathcal P(\mathcal B, N')}.$ Then $U_m$ is equal to $$\sum_{j=1}^h\chi(\frak I_j) \prod^s_{i=1} \rho_i(\psi_i(N_{E/K}(\frak I_j))) \sum_{aO_E\in S_{m, \mathcal S}(\frak I_j, \mathcal B, N')} \prod^s_{i=1} \rho_i(\frac{N_{E/K}(a)}{\sgn_i(N_{E/K}(a))}).$$
 \noindent Therefore, by Lemma \ref{LemmaS3.2}, if $m \geq  2g_E [\mathbb F_E:\mathbb F_q]+d_\infty(N')+d_{\mathcal S}(\mathcal B)+\frac{s [E:K]_{sep}}{(p-1)[\mathbb F_q:\mathbb F_p]},$ we have:
 $$U_m=0.$$
 \end{proof}
Let $s\geq 1$ be an integer.   Let $f:\mathbb Z\rightarrow 1+(q-1)\mathbb Z$ be a map. For $i=1, \ldots, s,$  let $\psi_i: \mathcal I(A)\rightarrow \overline{K}_\infty$ be a map  such that :
$$\forall I\in \mathcal I(A), \forall \alpha \in K^\times, \quad \psi_i(I\alpha)= \psi_i(I) \frac{\alpha}{\sgn_i(\alpha)^{f(\deg I)}}.$$
Let's observe  that, by the proof of Lemma \ref{LemmaS2.1}, for $i=1, \ldots, s,$ there exist $e_i\in \mathbb Z,$ $e_i\equiv 1\pmod{q-1},$ and  $\zeta_i\in \overline{\mathbb F}_q^\times$ such that:
 $$\forall a\in K_\infty^\times, \quad \sgn_i(a) =\sgn(a)^{e_i} \zeta_i^{v_\infty(a)}.$$
 Let $\chi: \mathcal I(A)\rightarrow F$ be a map such that:
 $$\forall I\in \mathcal I(A), \forall \alpha \in K^\times, \quad \chi (I\alpha)= \chi (I).$$
 For $i=1, \ldots, s,$ let $\rho_i:K( \psi_i)\hookrightarrow F$ be an $\mathbb F_p$-algebra homomorphism. For $i=1, \ldots, s,$ let $l_i\in \mathbb N $ such that: $$\forall \alpha \in K^\times, \quad \rho_i(\sgn_i(\alpha)) =\sgn_i(\alpha)^{p^{l_i}}.$$

\begin{corollary}\label{CorollaryS3.2}
Let $s\geq 1.$ If $$p^{l_1}e_1+\cdots +p^{l_s}e_s\equiv 0\pmod{q^{d_\infty}-1},$$ and  $$\prod_{i=1}^s \zeta_i^{p^{l_i} }=1,$$  then :
 $$\sum_{m\geq 0} \sum_{\substack{I\in \mathcal I(A), I \subset A \\ \deg  I=m}} \chi (I)  \rho_1(\psi_1( I))\cdots \rho_s(\psi_s( I))=0.$$
\end{corollary}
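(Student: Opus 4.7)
The plan is to parameterize the sum by $\Pic(A)$, use the two sign hypotheses to eliminate the twist factors in $\psi_i$, and then combine Lemma~\ref{LemmaS3.2} with Goss's Lemma~\ref{LemmaS3.1} on a Riemann--Roch space.

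I would first choose representatives $J_1,\ldots,J_h$ of $\Pic(A)$. Each integral ideal $I\subset A$ of degree $m$ in the class of $J_j$ writes uniquely (modulo $\mathbb F_q^\times$) as $I=J_j\alpha$ with $\alpha\in K^\times$, $\alpha J_j\subset A$, $\deg(\alpha J_j)=m$. Substituting the transformation law $\psi_i(J_j\alpha)=\psi_i(J_j)\alpha/\sgn_i(\alpha)^{f(\deg J_j)}$ together with $\rho_i(\sgn_i(\alpha))=\sgn_i(\alpha)^{p^{l_i}}$ yields
$$\prod_{i=1}^s\rho_i(\psi_i(J_j\alpha))=C_j\prod_{i=1}^s\rho_i(\alpha)\cdot\left(\prod_{i=1}^s\sgn_i(\alpha)^{p^{l_i}}\right)^{-f(\deg J_j)},$$
with $C_j=\prod_i\rho_i(\psi_i(J_j))$. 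Expanding $\sgn_i(\alpha)=\sgn(\alpha)^{e_i}\zeta_i^{v_\infty(\alpha)}$ and using that $\sgn(\alpha)\in\mathbb F_\infty^\times$ has order dividing $q^{d_\infty}-1$, the two hypotheses $\sum p^{l_i}e_i\equiv 0\pmod{q^{d_\infty}-1}$ and $\prod\zeta_i^{p^{l_i}}=1$ force $\prod_i\sgn_i(\alpha)^{p^{l_i}}=1$, so the twist factor disappears and $\prod_i\rho_i(\psi_i(J_j\alpha))=C_j\prod_i\rho_i(\alpha)$.

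Rearranging the total sum then gives
$$\sum_{m\geq 0}\sum_{\substack{I\subset A\\ \deg I=m}}\chi(I)\prod_i\rho_i(\psi_i(I))=\sum_{j=1}^h\chi(J_j)C_j\cdot\frac{1}{q-1}\sum_{\substack{\alpha\in J_j^{-1}\setminus\{0\}\\ \deg(\alpha J_j)\geq 0}}\prod_{i=1}^s\rho_i(\alpha).$$
For each $j$, Lemma~\ref{LemmaS3.2} applied with $E=K$, $\mathcal B=A$, $\mathcal S=\emptyset$, $N=K_\infty^\times$ shows that the contribution at fixed $\deg(\alpha J_j)=m$ vanishes for $m$ above an explicit bound, so the $\alpha$-sum is in fact finite. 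I would then identify this sum with $\sum_{\alpha\in (L(J_j+M\infty)\setminus\{0\})/\mathbb F_q^\times}\prod_i\rho_i(\alpha)$ for $M$ large: by the product formula, every nonzero $\alpha\in L(J_j+M\infty)$ satisfies $v_\infty(\alpha)\leq\deg J_j/d_\infty$, equivalently $\deg(\alpha J_j)\geq 0$, so no spurious terms appear. Taking $M$ large enough so also $\dim_{\mathbb F_p}L(J_j+M\infty)>s/(p-1)$ (automatic by Riemann--Roch), Goss's Lemma~\ref{LemmaS3.1} (applied with $x_i=0$ and $\mathbb F_p$-linear maps $\rho_i$) gives $\sum_{\alpha\in L(J_j+M\infty)}\prod_i\rho_i(\alpha)=0$. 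Removing the $\alpha=0$ term (which is zero) and dividing by $q-1$---using $\prod_i\rho_i(u)=u^{\sum p^{l_i}}=1$ for $u\in\mathbb F_q^\times$, a consequence of $\sum p^{l_i}e_i\equiv 0\pmod{q-1}$ (since $q-1$ divides $q^{d_\infty}-1$ and $e_i\equiv 1\pmod{q-1}$)---then yields the claimed vanishing.

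The main technical point is the product-formula bound $v_\infty(\alpha)\leq \deg J_j/d_\infty$ for every nonzero $\alpha\in L(J_j+M\infty)$, which guarantees that orbits in the Riemann--Roch space correspond bijectively with integral ideals in the class of $J_j$, so that no extraneous terms enter the Goss's Lemma sum and the combined use of Lemmas~\ref{LemmaS3.1} and~\ref{LemmaS3.2} closes the argument cleanly.
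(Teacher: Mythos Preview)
Your proposal is correct and follows essentially the same route as the paper: decompose over $\Pic(A)$, use the two sign hypotheses to reduce $\prod_i\rho_i(\psi_i(J_j\alpha))$ to $C_j\prod_i\rho_i(\alpha)$, invoke Lemma~\ref{LemmaS3.2} for finiteness, and then kill the remaining sum over an $\mathbb F_q$-vector space via Lemma~\ref{LemmaS3.1} and Riemann--Roch. The only cosmetic difference is that the paper names this space $V_d$ whereas you write $L(J_j+M\infty)$; your product-formula check is correct but in fact redundant, since $\alpha\in J_j^{-1}$ already forces $\alpha J_j\subset A$ and hence $\deg(\alpha J_j)\ge 0$.
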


\begin{proof} Here $E=K, $ $\mathcal S=\emptyset,$ $\mathcal B=A,$ $N=K_\infty^\times.$ For $m\geq 0,$ let's set:
 $$U_m=\sum_{m\geq 0} \sum_{\substack{I\in \mathcal I(A), I \subset A \\ \deg  I=m}} \chi (I)  \rho_1(\psi_1( I))\cdots \rho_s(\psi_s( I)).$$
 Let $h=\mid {\rm Pic}(A) \mid.$ Select $I_1, \ldots,  I_h\in  \mathcal I (A)$, $I_1, \ldots , I_h \subset A,$  a system of representatives of ${\rm Pic}(A).$ Then $U_m$ is equal to: $$ \sum_{j=1}^h\chi(I_j) \prod^s_{i=1} \rho_i(\psi_i( I_j))  \sum_{aA\in S_{m, \mathcal S}(I_j, A, N)} \prod^s_{i=1}  \rho_i(\frac{a}{\sgn_i (a)^{f(\deg I_j)}}).$$
 Now, assume that $p^{l_1}e_1+\cdots +p^{l_s}e_s\equiv 0\pmod{q^{d_\infty}-1},$ and  $\prod_{i=1}^s \zeta_i^{p^{l_i} }=1.$  For simplicity, we set: $$S_m(I_j)=S_{m, \mathcal S}(I_j, A, N)=\{ aA, a\in K^\times, a I_j\subset A, \deg (a I_j))=m\}.$$ Then $\sum_{m\geq 0} U_m$ is equal to the finite sum (Lemma \ref{LemmaS3.2}):  $$\sum_{j=1}^h\chi( I_j)\rho_1(\psi_1( I_j))\cdots \rho_s(\psi_s( I_j)) \sum_{m\geq 0} \sum_{aA\in S_m( I_j)} \rho_1(a)\ldots \rho_s(a).$$
 Let's select $d\geq 0$ such that:
 $$\sum_{m\geq 0} \sum_{aA\in S_m(I_j)} \rho_1(a)\ldots \rho_s( a)=\sum_{m= 0}^d \sum_{aA\in S_m(I_j)} \rho_1(a)\ldots \rho_s( a).$$
 Let $V_{d}=\{ a\in K^\times, aA\in S_m( I_j), m\leq d\} \cup \{0\}.$ Then $V_{d}$ is a finite dimensional  $\mathbb F_q$-vector space and:
 $$\sum_{m= 0}^d \sum_{aA\in S_m(I_j)} \rho_1(a)\ldots \rho_s( a)=-\sum_{a\in V_d} \rho_1(a)\ldots \rho_s( a).$$
 By Lemma \ref{LemmaS3.1}, if $\dim_{\mathbb F_q}V_{d}> \frac{s}{(p-1)[\mathbb F_q:\mathbb F_p]},$ then:
 $$\sum_{a\in V_d} \rho_1(a)\ldots \rho_s( a)=0.$$
 Now, the Riemann-Roch Theorem tells us that, for  $d\geq 2g_K-1,$ we have: $\dim_{\mathbb F_q} V_d=d+1-g_K.$ Thus:
 $$\sum_{m\geq 0} \sum_{aA\in S_m(I_j)} \rho_1(a)\ldots \rho_s( a)=0.$$
\end{proof}


\section{Several variable twisted zeta functions}\label{SVTZF}${}$\par

\subsection{The $\infty$-adic case}\label{TZF}${}$\par
Let $E/K$ be a finite extension, and let $\mathcal B$ and $N$ as in the previous section. Let $\mathcal S$ be a finite set, possibly empty,  of maximal ideal of $O_E$ which are relatively prime to $\mathcal B.$ Let $\chi: \mathcal I(\mathcal B)\rightarrow F$ be a map such that:
$$\forall \frak I\in \mathcal I(\mathcal B), \forall \frak J\in \mathcal P(\mathcal B,N), \quad \chi (\frak I\frak J)=\chi (\frak I).$$
Let $s\geq 0, n\geq 1$ be two integers. Let $\psi_1, \ldots, \psi_s, \eta_1, \ldots \eta_n: \mathcal I(A)\rightarrow \overline{K}_\infty$ be $s+n$ admissible maps. We assume that for $j=1, \ldots, n, $ $\eta_j(\mathcal I(A))\subset \overline{K}_\infty^\times,$ and for $i=1, \ldots, s,$ $n(\psi_i)\in \mathbb N.$   For $i=1, \ldots, s,$ let $\rho_i: K(\psi_i)\hookrightarrow F$ be an $\mathbb F_p$-algebra homomorphism.  For $i=1, \ldots, n,$ let $\sigma_i: K_\infty(\langle \eta_i \rangle )\hookrightarrow F$ be a continuous $\mathbb F_p$-algebra homomorphism. Let $u=(x; z_1, \ldots, z_n) \in \mathbb S_{F,n}=F^\times \times \mathbb Z_p^n.$
\begin{theorem}\label{TheoremS4.1} The following  sum converges in $F:$\par
$$\sum_{m\geq 0} \sum_{\substack{\frak I\in \mathcal I_{\mathcal S}(\mathcal B), \frak I\subset O_E \\ \deg N_{E/K} (\frak I)=m}} \chi (\frak I)\rho_1(\psi_1(N_{E/K}(\frak I))) \cdots \rho_s(\psi_s(N_{E/K}(\frak I))) N_{E/K}(\frak I)_{\underline{\sigma}, \underline{\eta}}^{-u}.$$
\noindent We denote the above sum by $\zeta_{\mathcal S, O_E}(\underline{\rho},\underline{\psi}; \underline{\sigma}, \underline{\eta}; \chi; u).$
\end{theorem}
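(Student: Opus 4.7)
Since $F$ is $v_F$-complete, it suffices to show that the $m$-th slice
$$S_m := x^{-m}\sum_{\substack{\frak I\in\mathcal I_{\mathcal S}(\mathcal B),\,\frak I\subset O_E\\ \deg N_{E/K}(\frak I)=m}} \chi(\frak I)\prod_{i=1}^s\rho_i(\psi_i(N_{E/K}(\frak I)))\prod_{j=1}^n\sigma_j(\langle\eta_j(N_{E/K}(\frak I))\rangle^{-z_j})$$
satisfies $v_F(S_m)\to+\infty$ as $m\to\infty$. The approach is a Goss-type argument: approximate the $\mathbb Z_p$-valued exponents $z_j$ by integers, invoke Corollary \ref{CorollaryS3.1} for exact vanishing in the integer case, and control the approximation error using the characteristic-$p$ identity $(1+\epsilon)^{p^N}=1+\epsilon^{p^N}$.

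I first choose an open subgroup of finite index $N'=\prod_jN'_j\subset N$ small enough that, for every $bO_E\in\mathcal P_{\mathcal S}(\mathcal B,N')$, one has $N_{E/K}(b)\in\bigcap_iM(\psi_i)\cap\bigcap_jM(\eta_j)$ (so the admissibility relations of all $\psi_i,\eta_j$ apply uniformly) and $v_\infty(\langle N_{E/K}(b)\rangle-1)\geq L_0$ for a tunable $L_0>0$. Decompose $S_m$ along the finite group $\mathcal I_{\mathcal S}(\mathcal B)/\mathcal P_{\mathcal S}(\mathcal B,N')$ via representatives $\frak J_1,\ldots,\frak J_h$, and on a coset write $\frak I=\frak J_k\,bO_E$. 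Admissibility, together with the identity $v_\infty(N_{E/K}(b))=(\deg N_{E/K}(\frak J_k)-m)/d_\infty$ (depending only on $(k,m)$), lets me extract all coset-constant factors and reduces the inner sum over $b$ to the ``variable part''
$$\Sigma(b)=\prod_{i=1}^s\rho_i(\langle N_{E/K}(b)\rangle^{n(\psi_i)})\prod_{j=1}^n\sigma_j(\langle N_{E/K}(b)\rangle^{-n(\eta_j)z_j}).$$

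Next, approximate each $w_j:=n(\eta_j)z_j\in\mathbb Z_p$ by an integer $w_j^{(N)}\in\mathbb Z$ with $v_p(w_j-w_j^{(N)})\geq N$ and $|w_j^{(N)}|\leq p^N$. By $(1+\epsilon)^{p^N}=1+\epsilon^{p^N}$, for $y=\langle N_{E/K}(b)\rangle\in U_\infty$ one has $v_\infty(y^{-w_j}-y^{-w_j^{(N)}})\geq p^NL_0$, uniformly in $b$; continuity of the $\sigma_j$'s then gives an approximation error in $\Sigma(b)$ (and in the full coset sum $S_m$) of $v_F$-valuation at least $c\,p^NL_0$. Let $\Sigma^{(N)}(b)$ denote the integer-approximated variable part. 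Writing $\langle x\rangle=x/(\sgn(x)\pi^{v_\infty(x)})$, pulling out the coset-constant $\sgn$- and $\pi$-contributions, and expanding each integer exponent in base $p$ to absorb all $p$-power parts into new $\mathbb F_p$-algebra homs of the form $\rho\circ\Frob^i$ (resp.\ $\sigma\circ\Frob^i$), $\Sigma^{(N)}(b)$ is reshaped into a product of $s'$ factors in the form required by Corollary \ref{CorollaryS3.1}, with $s'=O(N)$ (controlled by the $p$-adic digit sums of the approximate exponents). Corollary \ref{CorollaryS3.1} then yields $S_m^{(N)}=0$ as soon as $m\geq m_0(N):=2g_E[\mathbb F_E:\mathbb F_q]+d_\infty(N')+d_{\mathcal S}(\mathcal B)+s'[E:K]_{sep}/((p-1)[\mathbb F_q:\mathbb F_p])$, a threshold linear in $N$.

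Combining the integer-case vanishing with the approximation bound gives
$$v_F(S_m)\geq c\,p^N\,L_0-m\,v_F(x)\quad\text{for all }m\geq m_0(N).$$
For each $m$, choose $N=N(m)$ with $m_0(N(m))\leq m$; since $m_0(N)$ grows linearly in $N$ while $p^N$ grows exponentially, we can take $N(m)$ of order $m/C$ for a constant $C>0$, and then $c\,p^{N(m)}L_0-m\,v_F(x)\to+\infty$ (exponentially in $m$), establishing convergence. The main obstacle I expect is the reshaping of $\Sigma^{(N)}(b)$ into the precise form of Corollary \ref{CorollaryS3.1}: one must verify that the base-$p$ expansion of the approximate integer exponents does indeed yield $s'=O(N)$ factors rather than $O(p^N)$, and that each piece fits the hypothesis on $\psi_i$ (i.e., $\psi_i(I\alpha)=\psi_i(I)\alpha/\sgn_i(\alpha)$) after absorbing the $\sgn$, $\pi$, and Frobenius contributions. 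This is careful bookkeeping but should require no new ingredients beyond those already developed in the paper.
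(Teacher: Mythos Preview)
Your approach is essentially the paper's: coset decomposition along $\mathcal I_{\mathcal S}(\mathcal B)/\mathcal P_{\mathcal S}(\mathcal B,N')$, integer approximation of the $\mathbb Z_p$-exponents with $p$-adic digit sums $O(N)$, exact vanishing of the approximated inner sum via the key lemma, and an error bound exponential in $N$ coming from $(1+\epsilon)^{p^N}=1+\epsilon^{p^N}$. The paper appeals to Lemma~\ref{LemmaS3.2} directly rather than Corollary~\ref{CorollaryS3.1}, and phrases the error bound via $v_F(\sigma_j(\pi))>0$ instead of via $L_0$ and continuity of $\sigma_j$, but these are equivalent.

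There is one ingredient you omit. The factors $\rho_i\big((N_{E/K}(b)/\sgn(N_{E/K}(b)))^{n(\psi_i)}\big)$ inside $\Sigma(b)$ are \emph{not} coset-constant and, since $\rho_i$ is not assumed continuous, can have arbitrarily negative $v_F$ as $m$ grows. The paper addresses this by noting that $v_F\circ\rho_i|_K$ is a valuation on $K$, hence trivial or attached to a place of $K$, which yields $v_F\big(\prod_i\rho_i(N_{E/K}(a))^{n(\psi_i)}\big)\geq -Dm-D'$. Without this, your error estimate ``$v_F$-valuation at least $c\,p^NL_0$'' for $\Sigma(b)-\Sigma^{(N)}(b)$ is false as stated: the $\rho_i$-block multiplies the $\sigma$-approximation error and drags the valuation down by something linear in $m$. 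Including this term your final bound becomes $v_F(S_m)\geq c\,p^{N}-D''m-D'''$, and since exponential beats linear the conclusion survives.

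Two smaller points: the ``tunable $L_0$'' is unnecessary, since $\langle N_{E/K}(b)\rangle\in K_\infty\cap U_\infty$ automatically gives $v_\infty(\langle N_{E/K}(b)\rangle-1)\geq 1$; and for the base-$p$ reshaping you need the approximate exponent on the $\sigma_j$-side to be a \emph{nonnegative} integer, so you should approximate $-w_j$ (not $w_j$) by $k_j\in\mathbb N$ with $\ell_p(k_j)\leq N(p-1)$, exactly as the paper does.
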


\begin{proof} The proof is in the spirit of the proof of \cite{AP}, Proposition 6.\par

Let $\infty _1, \ldots , \infty_r$ be the places of $E$ above $\infty.$  let $v_1, \ldots, v_r$ be the valuation on $E$ associated to these places. We denote by $E_{v_i}$ the $v_i$-adic completion of $E,$ $i=1, \ldots, r.$ Recall that $N=\prod_{i=1}^r N_i,$ where $N_i\subset E_{v_i}^\times$ is an open subgroup of finite index. For $i=1, \ldots, r,$ let:
$$N'_i=N_i\cap N_{E_{v_i}/K_\infty}^{-1} (M(\psi_1) \cap \ldots \cap M(\psi_s)\cap M(\eta_1)\ldots \cap M(\eta_n)).$$
Then $N'_i\subset E_{v_i}^\times$ is also an open subgroup of finite index. Let's set:
$$N'=\prod_{i=1}^r N'_i.$$\par
 Let $\frak I_1, \ldots, \frak I_h\in  \mathcal I_{\mathcal S}(\mathcal B)$, $\frak I_1, \ldots ,\frak I_h \subset O_E$ be a system of representatives of $\frac{\mathcal I_{\mathcal S}(\mathcal B)}{\mathcal P_{\mathcal S}(\mathcal B, N')}.$   Recall that $$S_{m, \mathcal S}(\frak I_j, \mathcal B, N')=\{ aO_E\in \mathcal P_{\mathcal S}(\mathcal B, N'), a\frak I_j\subset O_E, \deg (N_{E/K}(a\frak I_j))=m\}.$$ Let's define $V_{m,j}$ to be the following sum:
 \begin{align*}
 \sum_{aO_E\in S_{m, \mathcal S}(\frak I_j, \mathcal B, N')}&(\prod_{i=1}^s\rho_i(\gamma_{\psi_i}\pi^{-n(\psi_i)}))^{v_\infty(N_{E/K}(a))}\, \prod_{i=1}^s\rho_i(\frac{N_{E/K}(a)}{\sgn(N_{E/K}(a))})^{n(\psi_i)}\, \times \\
 & \times (\prod_{j=1}^n\sigma_j(\langle \gamma_{\eta_j} \rangle ^{-z_j}))^{v_\infty(N_{E/K}(a))}\, \prod_{j=1}^n \sigma_j(\langle N_{E/K}(a) \rangle )^{-z_jn(\eta_j)}.
 \end{align*}
Then:
\begin{align*}
x^m \sum_{\substack{\frak I\in \mathcal I_{\mathcal S}(\mathcal B), \frak I\subset O_E \\ \deg N_{E/K} (\frak I)=m}} \chi (\frak I)\rho_1(\psi_1(N_{E/K}(\frak I))) \cdots \rho_s(\psi_s(N_{E/K}(\frak I))) N_{E/K}(\frak I)_{\underline{\sigma}, \underline{\eta}}^{-u} \\
=\sum_{j=1}^h \chi (\frak I_j) \rho_1(\psi_1(N_{E/K}(\frak I_j))) \cdots \rho_s(\psi_s(N_{E/K}(\frak I_j))) N_{E/K}(\frak I_j)_{\underline{\sigma}, \underline{\eta}}^{-\underline{z}}V_{m,j}.
\end{align*}

${}$\par
Note that $v_F\circ \rho_i\mid_K$ is a valuation on $K,$ thus, it is equivalent to the trivial valuation or to the valuation attached to a place of $K.$  This implies that there exists  integers $D,D'\in \mathbb R,$ such that for all $m\geq 0$, for all $j\in \{1, \ldots, h\}$ and for all $aO_E\in S_{m, \mathcal S}(\frak I_j, \mathcal B, N')$, we have:
$$v_F(\prod_{i=1}^s\rho_i(N_{E/K}(a))^{n(\psi_i)})\geq -Dm-D'.$$ ${}$\par
If $l\in \mathbb N,$ we denote by $\ell_p(l)$ the sum of the digits of $l$ in base $p.$ By Lemma \ref{LemmaS3.2}, if $m_1, \ldots m_n\in \mathbb N$ are such that $m_1+\ldots +m_n \geq 1$ and if $$m\geq 2g_E [\mathbb F_E:\mathbb F_q]+d_\infty(N')+d_{\mathcal S}(\mathcal B)+\frac{(s+ \sum_{i=1}^n \ell_p (m_i)) [E:K]_{sep}}{(p-1)[\mathbb F_q:\mathbb F_p]},$$ then: $$\sum_{aO_E\in S_{m, \mathcal S}(\frak I_j, \mathcal B, N')} \prod^s_{i=1} \rho_i(\frac{N_{E/K}(a)}{\sgn(N_{E/K}(a))}) \prod^n_{j=1} \sigma_j(\langle N_{E/K}(a) \rangle )^{m_j}=0.$$ ${}$\par
Now, let  $l\in \mathbb N,$  and select $k_1, \ldots, k_n \in \mathbb N\setminus\{0\}$ such that:\par

i) $\ell_p(k_i)\leq  (l+1)(p-1),$\par

ii) $z_in(\eta_i)+k_i\equiv 0\pmod{p^{l+1}\mathbb Z_p}.$\par

\noindent For example, write $-z_i=\sum_{j\geq 0} u_{i,j} p^{ j} ,$ $ u_{i,j} \in \{0, \ldots, p-1\},$ and set $k_i= \sum_{j=0}^l u_{i,j} p^{ j} +p^{ l+1}.$ Let $$C= [E:K]_{sep}(p-1)[\mathbb F_q:\mathbb F_p](2g_E [\mathbb F_E:\mathbb F_q]+d_\infty(N')+d_{\mathcal S}(\mathcal B) +(\sum_{i=1}^s n(\psi_i))+1).$$ Then, if $m\geq Cn(l+1),$ we have:\par
$$\sum_{aO_E\in S_{m, \mathcal S}(\frak I_j, \mathcal B, N')} \prod^s_{i=1} \rho_i(\frac{N_{E/K}(a)}{\sgn(N_{E/K}(a))})^{n(\psi_i)} \prod^n_{j=1} \sigma_j(\frac{N_{E/K}(a)}{\sgn(N_{E/K}(a))})^{k_j}=0.$$ \par
\noindent Thus, there exists $D''\in \mathbb R$ such that:
$$v_F(V_{m,j})\geq p^{l+1}{\rm Inf}\{ v_F(y_i), i=1, \ldots, n\}-D''m-D',$$
where $y_i=\sigma_i(\pi), i=1, \ldots, n.$ Note also that $v_F(y_i)>0, i=1, \ldots, n.$
Therefore, if $m\geq Cn,$  we get:
$$v_F(V_{m,j})\geq p^{ [\frac{m}{Cn}]}{\rm Inf}\{ v_F(y_i), i=1, \ldots, n\}-D''m-D'.$$
Therefore, for $m\geq Cn ,$ there exists $D'''\in \mathbb R$ such that  we have:
\begin{align*}
& v_F \Big(x^m \sum_{\substack{\frak I\in \mathcal I_{\mathcal S}(\mathcal B), \frak I\subset O_E \\ \deg N_{E/K} (\frak I)=m}} \chi (\frak I)\rho_1(\psi_1(N_{E/K}(\frak I))) \cdots \rho_s(\psi_s(N_{E/K}(\frak I))) N_{E/K}(\frak I)_{\underline{\sigma}, \underline{\eta}}^{-u} \Big) \\
& \geq p^{[\frac{m}{Cn}]}{\rm Inf}\{ v_F(y_i), i=1, \ldots, n\}-D''m-D'''.
\end{align*}
\end{proof}

\begin{remark}\label{RemarkS4.1} By the proof of Theorem \ref{TheoremS4.1},  the valuation $v_F$ of the coefficient of $x^{-m}$ in $\zeta_{\mathcal S, O_E}(\underline{\rho},\underline{\psi}; \underline{\sigma}, \underline{\eta}; \chi; u)$ grows exponentially in $m$ for large $m$. \end{remark}
 Assume that $\psi_1, \ldots, \psi_s, \eta_1, \ldots \eta_n,$ and $\chi$ are group homomorphisms, then note that if $u=(x; z_1, \ldots, z_n)$ with $v_F(x)<<0,$ we have:
$$\zeta_{\mathcal S, O_E}(\underline{\rho},\underline{\psi}; \underline{\sigma}, \underline{\eta}; \chi; u)=\prod_{\frak P} (1- \chi (\frak P) \rho_1(\psi_1(N_{E/K}(\frak P)))\cdots \rho_s(\psi_s(N_{E/K}(\frak P)))N_{E/K}(\frak P)_{\underline{\sigma}, \underline{\eta}} ^{-u})^{-1},$$
where $\frak P$ runs through the set of maximal ideals of $O_E$ that are contained in $\mathcal I_{\mathcal S}(\mathcal B).$\par
${}$\par
Let $f:\mathbb Z\rightarrow 1+(q-1)\mathbb Z$ be a map. For $i=1, \ldots, s,$ let $\sgn_i:K_\infty^\times \rightarrow \overline{\mathbb F}_q^\times$ be a group homomorphism such that $\sgn_i\mid_{\mathbb F_q^\times}= {\rm Id}_{\mathbb  F_q^\times}.$  For $i=1, \ldots, s,$ let $\psi_i: \mathcal I(A)\rightarrow \overline{K}_\infty$ be a map such that:
$$\forall I\in \mathcal I(A), \forall \alpha \in K^\times, \quad \psi_i(I\alpha)= \psi_i(I) \frac{\alpha}{\sgn_i(\alpha)^{f(\deg I)}}.$$
For $i=1, \ldots, s,$ let $\rho_i: K(\sgn_i(K^\times), \psi_i(I), I\in \mathcal I(A))\rightarrow F$ be an $\mathbb F_p$-algebra homomorphism. For $j=1, \ldots, n,$ let $f_j:\mathbb Z\rightarrow 1+(q-1)\mathbb Z$ be a map, and  let $\eta_j: \mathcal I(A)\rightarrow \overline{K}_\infty^\times$ be a map such that
$$\forall I\in \mathcal I(A), \forall \alpha \in K^\times, \quad \eta_j(I\alpha)= \eta_j(I) \frac{\alpha}{\sgn'_j(\alpha)^{f_j(\deg I)}},$$
where $\sgn_j':K_\infty^\times \rightarrow \overline{\mathbb F}_q^\times$ is a group homomorphism such that $\sgn_j'\mid_{\mathbb F_q^\times}= {\rm Id}_{\mathbb  F_q^\times}.$
For $j=1, \ldots, n,$ let $\sigma_j : K_\infty(\sgn_j'(K^\times), \pi^{\frac{1}{d_\infty}},\langle \eta_j(I) \rangle , I\in \mathcal I(A))\rightarrow F$ be a continuous $\mathbb F_p$-algebra homomorphism.

Let's define $l_1, \ldots, l_s, l'_1, \ldots, l'_n \in \mathbb N$ as follows:
\begin{align*}
\forall i=1, \ldots, s, \forall \alpha \in K^\times , &\quad \rho_i(\sgn_i(\alpha))= \sgn_i(\alpha)^{p^{l_i}},\\
\forall j=1, \ldots, n, \forall \zeta\in \mathbb F_\infty^\times , &\quad \sigma_j(\zeta )= \zeta^{p^{l'_j}}.
\end{align*}
For $i=1, \ldots, s,$ there exist $e_i\in \mathbb Z,$ $e_i\equiv 1\pmod{q-1},$ and  $\zeta_i\in \overline{\mathbb F}_q^\times$ such that:
 $$\forall a\in K_\infty^\times, \quad \sgn_i(a) =\sgn(a)^{e_i} \zeta_i^{v_\infty(a)}.$$
We assume that:
$$\forall a\in K^\times,\quad \prod_{i=1}^s(\frac{\sgn_i(a)}{\sgn(a)})^{e_ip^{l_i}} =1, \text{ and } \quad \prod_{i=1}^s\zeta_i^{p^{l_i}}=1.$$
Let $\chi: \mathcal I(A)\rightarrow F$ be a map such that:
 $$\forall I\in \mathcal I(A), \forall \alpha \in K^\times, \quad \chi (I\alpha)= \chi (I).$$
Note that $\psi_1, \ldots, \psi_s, \eta_1, \ldots, \eta_n$ are admissible maps.
\begin{corollary}\label{CorollaryS4.2}
We assume that $E=K,$ $\mathcal B=A,$ $N=K_\infty^\times,$  $\mathcal S=\emptyset.$  Let $m_1, \ldots, m_n \in \mathbb N$ such that $ p^{l_1}e_1 +\dots+p^{l_s}e_s+m_1p^{l'_1}+\ldots+m_np^{l'_n} \equiv 0\pmod{q^{d_\infty}-1},$  and $p^{l_1}e_1+\dots+p^{l_s}e_s+m_1p^{l'_1}+\ldots+m_np^{l'_n} \geq1.$ Then:
$$\zeta_{\mathcal S, A}(\underline{\rho}, \underline{\psi};\underline{\sigma}, \underline{\eta};  \chi; (\sigma_1(\pi^{\frac{1}{d_\infty}})^{m_1}\cdots \sigma_n(\pi^{\frac{1}{d_\infty}})^{m_n}; -m_1, \ldots, -m_n))=0.$$
\end{corollary}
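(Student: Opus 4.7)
The plan is to substitute the prescribed point $u = (x_0;\ -m_1,\ldots,-m_n)$ with $x_0 = \prod_{j=1}^n \sigma_j(\pi^{1/d_\infty})^{m_j}$ into the defining sum for $\zeta_{\mathcal S,A}$ and then reduce the resulting expression to an application of Corollary \ref{CorollaryS3.2}. Using that each $\sigma_j$ is an $\mathbb F_p$-algebra homomorphism and that $z_j = -m_j$, I would first rewrite
\[
I_{\underline\sigma,\underline\eta}^{-u} \;=\; x_0^{-\deg I}\prod_{j=1}^n \sigma_j\bigl(\langle \eta_j(I)\rangle\bigr)^{m_j} \;=\; \prod_{j=1}^n \sigma_j\bigl(\tilde\eta_j(I)\bigr)^{m_j},
\]
where $\tilde\eta_j(I) := \pi^{-\deg I/d_\infty}\langle\eta_j(I)\rangle$. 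A direct calculation, using the admissibility of $\eta_j$, the triviality of $\langle\cdot\rangle$ on $\overline{\mathbb F}_q^\times$, and the identity $\pi^{v_\infty(\alpha)}\langle\alpha\rangle = \alpha/\sgn(\alpha)$, yields the transformation rule
\[
\tilde\eta_j(I\alpha) = \tilde\eta_j(I)\cdot\frac{\alpha}{\sgn(\alpha)}, \qquad \forall\,\alpha\in K^\times.
\]
So $\tilde\eta_j$ falls into the class of admissible maps addressed by Corollary \ref{CorollaryS3.2}, with sign function the canonical $\sgn$ (hence parameters $e=1$, $\zeta=1$) and with constant $f_{\mathrm{new}}\equiv 1$.

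Next, I would expand $\prod_{j=1}^n \sigma_j(\tilde\eta_j(I))^{m_j}$ as an unordered product of $M := m_1+\cdots+m_n$ first-power factors, with $\sigma_j\circ\tilde\eta_j$ appearing $m_j$ times. Together with the $s$ original factors $\rho_i\circ\psi_i$, the summand of $\zeta_{\mathcal S,A}$ at $u$ becomes precisely of the form handled by Corollary \ref{CorollaryS3.2}, now with $s+M$ admissible factors. Verification of the hypotheses proceeds as follows: the combined Frobenius-exponent sum is $\sum_{i=1}^s p^{l_i}e_i + \sum_{j=1}^n m_j p^{l'_j}$, which is assumed both to be $\equiv 0 \pmod{q^{d_\infty}-1}$ and $\geq 1$; the combined product condition reduces to $\prod_{i=1}^s \zeta_i^{p^{l_i}} \cdot \prod_{j=1}^n 1^{m_j p^{l'_j}} = \prod_{i=1}^s \zeta_i^{p^{l_i}} = 1$, which is part of the standing hypothesis of the subsection.

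The main obstacle will be the careful bookkeeping needed to confirm that the proof of Corollary \ref{CorollaryS3.2} accommodates the heterogeneous situation in which the $s$ original factors come with the function $f$ while the $M$ new factors come with $f_{\mathrm{new}}\equiv 1$ and a different sign. Tracking the sign contribution in the decomposition by representatives of $\mathrm{Pic}(A)$ employed in the proof of Corollary \ref{CorollaryS3.2}, one must verify that the resulting sign factor still collapses to $1$; here the auxiliary standing hypothesis $\prod_{i=1}^s(\sgn_i(a)/\sgn(a))^{e_i p^{l_i}} = 1$ is expected to supply the missing cancellation between $f$ and $f_{\mathrm{new}}$. Once this is in hand, Corollary \ref{CorollaryS3.2} applies verbatim and yields the desired vanishing $\zeta_{\mathcal S,A}(\underline\rho,\underline\psi;\underline\sigma,\underline\eta;\chi;u)=0$.
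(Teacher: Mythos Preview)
Your proposal is correct and follows essentially the same route as the paper's own proof. Both arguments substitute the special point, decompose the sum along representatives of $\mathrm{Pic}(A)$, use the standing hypotheses together with the congruence $\sum_i p^{l_i}e_i+\sum_j m_jp^{l'_j}\equiv 0\pmod{q^{d_\infty}-1}$ to strip away the sign factors, and then invoke the Riemann--Roch/Lemma~\ref{LemmaS3.1} vanishing that underlies Corollary~\ref{CorollaryS3.2}. Your repackaging via $\tilde\eta_j(I)=\pi^{-\deg I/d_\infty}\langle\eta_j(I)\rangle$ is a clean conceptual device, but the resulting computation is the same one the paper carries out directly on $V_{m,j}$; in particular the paper, like you, appeals to \emph{the proof} of Corollary~\ref{CorollaryS3.2} rather than its statement, precisely because of the heterogeneity you flag between $f$ and $f_{\mathrm{new}}\equiv 1$.
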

\begin{proof}  For $i=1, \ldots, n,$ let $y_i=\sigma_i(\pi^{\frac{1}{d_\infty}}).$ Let $u=( y_1^{m_1}\cdots y_n^{m_n}; -m_1, \ldots, -m_n)\in \mathbb S_{F,n}.$ Let $h=\mid {\rm Pic}(A) \mid.$ Select $I_1, \ldots,  I_h\in  \mathcal I (A)$, $I_1, \ldots , I_h \subset A,$  a system of representatives of ${\rm Pic}(A).$  Let's set:
 $$V_{m,j}=\sum_{aA\in S_{m,\mathcal S}( I_j, A,N)} \prod^{s}_{i=1} \rho_i(\frac{a}{\sgn_i(a)^{f(\deg I_j)}}) (aA)_{\underline{\sigma}, \underline{\eta}}^{(m_1, \ldots, m_n)}.$$
 Then, we have:
 $$V_{m,j}=\sum_{aA\in S_{m,\mathcal S}( I_j, A,N)} \prod^{s}_{i=1} \frac{\rho_i(a)}{\sgn(a)^{p^{l_i}e_i}} \prod^{n}_{j=1} \sigma_j(\frac{a}{\sgn(a)\pi^{v_\infty(a)}})^{m_j} .$$
Since $ p^{l_1}q^{e_1} +\dots+p^{l_s}q^{e_s}+m_1p^{l'_1}+\ldots+m_np^{l'_n} \equiv 0\pmod{q^{d_\infty}-1}$ and $x= y_1^{m_1}\cdots y_n^{m_n}$, $$V_{m,j} x^{-m}=-(\prod_{j=1}^ny_j^{-m_j\deg  I_j})\sum_{\substack{a\in K^\times \\ aA\in S_{m, \mathcal S}(I_j, A,N) }} \rho_1(a)\ldots \rho_s(a)\sigma_1(a)^{m_1} \cdots \sigma_n(a)^{m_n}  .$$
The assertion is then  a consequence of the proof of   Corollary \ref{CorollaryS3.2}.
\end{proof}


\subsection{The case of finite places}\label{Finiteplaces}${}$\par

Let $v$ be a finite place of $K,$ i.e. $v: K\rightarrow \mathbb Z\cup \{+\infty\}$ is a  discrete valuation on $K$ such that there exists $a\in A$ with $v(a)=1.$ Let $P_v=\{ a\in A, v(a)>0\}$ be the maximal ideal of $A$ corresponding to the finite place $v.$ We denote by $K_v$ the $v$-adic completion of $K.$ We fix $\overline{K}_v$ an algebraic closure of $K_v.$ Then $v$ extends to a valuation $v : \overline{K}_v\rightarrow \mathbb Q\cup\{+\infty\}.$ We fix a $K$-morphism $\overline{K} \hookrightarrow \overline{K}_v$, and, by abuse of notation, we will identify the elements in $\overline{K}$ with their images in $\overline{K}_v.$ We fix $\pi_v$ a prime of $K_v,$ i.e. $\pi_v\in K_v$ and $v(\pi_v)=1.$ Then, as in section \ref{Notation}, $\forall x\in \overline{K}_v^\times,$ $x$ can be written in a unique way:
$$x= \pi_v^{v(x)} \omega_v(x)\langle x \rangle _v \text{ with} \quad \omega_v(x) \in \overline{\mathbb F}_q^\times, v(\langle x \rangle _v-1)>0.$$ \par
Let $\psi_1, \ldots, \psi_s, \eta_1, \ldots \eta_n$ be $s+n$  admissible maps. We assume that $n(\psi_i)\in \mathbb N,i=1, \ldots, s,$ and $ n(\eta_j)\in \mathbb Z, j=1, \ldots, n,$ and that our  $s+n$ admissible maps are algebraic.   We  assume that, for $j=1, \ldots, n,$ $\eta_j(\mathcal I(A))\subset \overline{K}_v^\times.$  For $i=1, \ldots, s,$ we recall that  $K(\psi_i)= K(\mathbb F_\infty,\gamma_{\psi_i}\pi^{-n(\psi_i)}, \psi_i(I), I\in \mathcal I(A))\subset \overline{K}_v.$ For $j=1, \ldots, n,$ we set $K_v(\langle \eta_j \rangle _v):= K_v( \langle \eta_j(I) \rangle _v, I\in \mathcal I(A))\subset \overline{K}_v.$ Observe that, for $j=1, \ldots, n,$ $K_v(\langle \eta_j \rangle _v)/K_v$ is a finite extension.\par
Let $(F,v_F)$ be a field which is complete with respect to a non-trivial valuation $v_F: F\rightarrow R\cup\{+\infty\}$ and such that $F$ is an $\overline{\mathbb F}_q$-algebra.  \par
Let $n\geq 1,$ and let's set:
$$\mathbb S_{v,F,n}= F^\times \times \mathbb Z_p^n \times {\mathbb Z}^n.$$
For $i=1, \ldots, n, $ let $\sigma_i:  K_v(\langle \eta_i \rangle _v)\hookrightarrow F$ be a continuous $\mathbb F_p$-algebra homomorphism.  For $u=(x; z_1, \ldots, z_n; \underline{\delta})\in \mathbb S_{v,F,n},$ $I\in \mathcal I(A),$ we set:
$$I_{\underline{\sigma}, \underline{\eta}}^u= x^{\deg I} \omega_v( \eta_1(I))^{\delta_1} \cdots \omega_v( \eta_n(I))^{\delta_n}  \sigma_1(\langle \eta_1(I) \rangle _v^{z_1})\cdots \sigma_n(\langle \eta_n(I) \rangle _v^{z_n})\in F^\times.$$
Let $E/K$ be a finite extension, and let $\mathcal B$ and $N$ as in the previous section. Let $\mathcal S$ be a finite set, possibly empty,  of maximal ideals of $O_E$ which are relatively prime to $\mathcal B.$ Let $\mathcal S_v$ be the union of $S$ and the maximal ideals of $O_E$ above $P_v$ and that do not divide $\mathcal B.$ Let $\chi: \mathcal I(\mathcal B)\rightarrow F$ be a map such that:
$$\forall \frak I\in \mathcal I(\mathcal B), \forall \frak J\in \mathcal P(\mathcal B,N), \quad \chi (\frak I \frak J)=\chi (\frak I).$$ For $i=1, \ldots, s,$ let  $\rho_i: K(\psi_i)\hookrightarrow F$ be an $\mathbb F_p$-algebra homomorphism. Let $u=(x; z_1, \ldots, z_n;\underline{\delta}) \in \mathbb S_{v,F,n}.$

\begin{theorem} \label{TheoremS4.2}
The following  sum converges in $F:$
$$\sum_{m\geq 0} \sum_{\substack{\frak I\in \mathcal I_{\mathcal S_v}(\mathcal B), \frak I\subset O_E \\ \deg (N_{E/K} (\frak I))=m}} \chi (\frak I) \rho_1(\psi_1(N_{E/K}(I)))\cdots \rho_s(\psi_s(N_{E/K}(I))) N_{E/K} (\frak I)_{\underline{\sigma}, \underline{\eta}}^{-u}.$$
\end{theorem}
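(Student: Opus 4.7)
The plan is to mirror the proof of Theorem \ref{TheoremS4.1}, replacing the $\infty$-adic $1$-unit twist $\langle\cdot\rangle$ by the $v$-adic twist $\langle\cdot\rangle_v$, and enlarging the congruence modulus $\mathcal B$ at $v$ so that the $v$-adic quantities met along the summation are controlled uniformly. Concretely I would shrink each $N_i$ to $N'_i=N_i\cap N_{E_{v_i}/K_\infty}^{-1}(M(\psi_1)\cap\cdots\cap M(\eta_n))$ and simultaneously replace $\mathcal B$ by $\mathcal B'=\mathcal B\prod_{w\mid v}w^{m(w)}$ for sufficiently large $m(w)$. Since the extra primes all lie in $\mathcal S_v$, one still has $\mathcal I_{\mathcal S_v}(\mathcal B)=\mathcal I_{\mathcal S_v}(\mathcal B')$, but now any $aO_E\in\mathcal P_{\mathcal S_v}(\mathcal B', N')$ produces $a$ that is simultaneously close to $1$ at every prime $w\mid v$, which forces a uniform lower bound $v(\langle\langle N_{E/K}(a)\rangle\rangle_v-1)\geq c>0$ on the $v$-adic $1$-unit part of the $\infty$-adic $1$-unit $\langle N_{E/K}(a)\rangle$, independent of $a$. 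Pick representatives $\frak I_1,\dots,\frak I_h\subset O_E$ of $\mathcal I_{\mathcal S_v}(\mathcal B')/\mathcal P_{\mathcal S_v}(\mathcal B', N')$ and, using admissibility of the $\eta_j$ inside $\overline K_v$ via the fixed embedding $\overline K\hookrightarrow\overline K_v$, decompose the coefficient of $x^{-m}$ as a finite $\mathbb F_p$-linear combination indexed by $\frak I_j$ of inner sums $V_{m,j}$ over $aO_E\in S_{m,\mathcal S_v}(\frak I_j,\mathcal B', N')$.

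The $a$-dependent part of $V_{m,j}$ splits into three kinds of factors. The $\rho_i/\sgn_i$ contributions of the $\psi_i$ are controlled exactly as in Theorem \ref{TheoremS4.1}, giving a valuation lower bound linear in $m$. The $\omega_v$-contributions $\prod_j\omega_v(\eta_j(N_{E/K}(a\frak I_j)))^{-\delta_j}$ take values in the finite group $\overline{\mathbb F}_q^\times$ and, after the shrinking of $\mathcal B$, reduce to bounded roots of unity depending only on $\frak I_j$ and $v_\infty(N_{E/K}(a))$. The nontrivial piece is the $v$-adic $1$-unit factor $\prod_j\sigma_j(\langle\langle N_{E/K}(a)\rangle\rangle_v^{-z_j n(\eta_j)})\sigma_j(\langle\gamma_{\eta_j}\rangle_v^{-z_j v_\infty(N_{E/K}(a))})$, handled by the $\mathbb Z_p$-approximation trick.

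For the approximation, fix $l\in\mathbb N$ and pick positive integers $k_j$ with $z_jn(\eta_j)+k_j\in p^{l+1}\mathbb Z_p$ and $\ell_p(k_j)\leq(l+1)(p-1)$, and replace each $\mathbb Z_p$-exponent $-z_jn(\eta_j)$ by $k_j$. The resulting error factor is the $\sigma_j$-image of a $(p^{l+1}\mathbb Z_p)$-power of $\langle\langle N_{E/K}(a)\rangle\rangle_v$; the identity $(1+x)^{p^{l+1}}=1+x^{p^{l+1}}$ in characteristic $p$, continuity of $\sigma_j$, positivity of $v_F(\sigma_j(\pi_v))$, and the uniform bound $c$ above give its $v_F$-valuation at least $p^{l+1}\kappa$ for a positive constant $\kappa$. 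The surviving integer-power sum fits the hypotheses of Lemma \ref{LemmaS3.2} (with the $k_j$-exponents absorbed into the $\rho/\sgn$ side), so it vanishes as soon as $m\geq 2g_E[\mathbb F_E:\mathbb F_q]+d_\infty(N')+d_{\mathcal S_v}(\mathcal B')+\frac{[E:K]_{\mathrm{sep}}(s+\sum_j\ell_p(k_j))}{(p-1)[\mathbb F_q:\mathbb F_p]}$. Taking $l\sim[m/(Cn)]$ for a suitable constant $C$ as in Theorem \ref{TheoremS4.1} yields $v_F(V_{m,j})\geq p^{[m/(Cn)]}\kappa-D''m-D'''$, which grows exponentially in $m$ and implies convergence. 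I expect the main obstacle to be producing the uniform lower bound $c$ of the first paragraph: the $\infty$-adic $1$-unit $\langle N_{E/K}(a)\rangle$ is defined by $\infty$-adic data, so obtaining uniform $v$-adic information on its $1$-unit part is what forces the enlargement of $\mathcal B$ at $v$ in a quantitatively controlled way.
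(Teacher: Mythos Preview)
Your overall plan is the paper's plan: reduce to cosets modulo $\mathcal P_{\mathcal S_v}(\mathcal B,N')$, approximate the $\mathbb Z_p$-exponents $-z_jn(\eta_j)$ by positive integers $k_j$ with controlled $\ell_p$, invoke Lemma~\ref{LemmaS3.2} to kill the integer-power sum, and bound the approximation error by a power of $p^{l+1}$ times a fixed positive constant. The structure and the constants $C$, $D''$, etc., match.

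What differs is your handling of the $v$-adic $1$-unit piece, and there you have made the problem harder than it is. The quantity $\langle\langle N_{E/K}(a)\rangle\rangle_v$ is ill-posed: the $\infty$-adic $1$-unit $\langle N_{E/K}(a)\rangle=\frac{N_{E/K}(a)}{\sgn(N_{E/K}(a))\pi^{v_\infty(N_{E/K}(a))}}$ lives in $K_\infty$, not in $\overline{K}$, so it has no natural image in $\overline{K}_v$. The paper avoids this entirely by using the algebraic form of admissibility (valid because $n(\eta_j)\in\mathbb Z$ and $\eta_j$ is algebraic),
\[
\eta_j(I\alpha)=\eta_j(I)\Bigl(\frac{\alpha}{\sgn(\alpha)}\Bigr)^{n(\eta_j)}\bigl(\gamma_{\eta_j}\pi^{-n(\eta_j)}\bigr)^{v_\infty(\alpha)},
\]
where each factor lies in $\overline{K}\subset\overline{K}_v$. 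Taking $\langle\cdot\rangle_v$ of each factor, the $a$-dependent $1$-unit contribution is $\langle\frac{N_{E/K}(a)}{\sgn(N_{E/K}(a))}\rangle_v^{n(\eta_j)}=\langle N_{E/K}(a)\rangle_v^{n(\eta_j)}$, since $\sgn(N_{E/K}(a))\in\overline{\mathbb F}_q^\times$.

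Once you see this, the uniform lower bound you worried about is automatic and needs no modification of $\mathcal B$. Every prime of $O_E$ above $P_v$ either divides $\mathcal B$ or lies in $\mathcal S_v$, so any $aO_E\in\mathcal P_{\mathcal S_v}(\mathcal B,N')$ has $N_{E/K}(a)\in K^\times$ a $v$-adic unit; hence $\langle N_{E/K}(a)\rangle_v\in K_v$ with $v(\langle N_{E/K}(a)\rangle_v-1)\ge 1$, and continuity of $\sigma_j$ gives $v_F(\sigma_j(\langle N_{E/K}(a)\rangle_v)-1)\ge v_F(\sigma_j(\pi_v))>0$, uniformly in $a$. Your enlargement $\mathcal B\mapsto\mathcal B'$ is therefore unnecessary (and, as you note, harmless since $\mathcal I_{\mathcal S_v}(\mathcal B)=\mathcal I_{\mathcal S_v}(\mathcal B')$), but the ``main obstacle'' you anticipated simply does not arise in the paper's decomposition.
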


\begin{proof} The proof of the Theorem is similar to the proof of Theorem \ref{TheoremS4.1}. We only give a sketch of the proof. Let $N'$ be as in the proof of Theorem \ref{TheoremS4.1}.\par
Let $\frak I_1, \ldots, \frak I_h\in  \mathcal I_{\mathcal S_v}(\mathcal B)$, $\frak I_1, \ldots ,\frak I_h \subset O_E$ be a system of representatives of $\frac{\mathcal I(\mathcal B)}{\mathcal P(\mathcal B,N')}.$ Recall that $$S_{m, \mathcal S_v}(\frak I_j, \mathcal B, N')=\{ aO_E\in \mathcal P_{\mathcal S_v}(\mathcal B, N'), a\frak I_j\subset O_E, \deg (N_{E/K}(a\frak I_j))=m\}.$$ Let $V_{m,j}$ be the following sum:
\begin{align*}
& \sum_{aO_E\in S_{m, \mathcal S_v}(\frak I_j, \mathcal B, N')}(\prod_{i=1}^s\rho_i(\gamma_{\psi_i}\pi^{-n(\psi_i)}))^{v_\infty(N_{E/K}(a))}\omega_v(\prod_{j=1}^n\sigma_j(\gamma_{\psi_j}\pi^{-n(\eta_j)})^{-\delta_j})^{v_\infty(N_{E/K}(a))}) \\
& \times (\prod_{j=1}^n\langle \sigma_j(\gamma_{\psi_j}\pi^{-n(\eta_j)}) \rangle _v^{-z_j})^{v_\infty(N_{E/K}(a))}\,
\omega_v(\frac{N_{E/K}(a)}{\sgn (N_{E/K}(a))})^{-\delta_1n(\eta_1)-\cdots -\delta_nn(\eta_n)} \\
& \times \prod^s_{i=1} \rho_i(\frac{N_{E/K}(a)}{\sgn(N_{E/K}(a))})^{n(\psi_i)} \prod_{j=1}^n\langle \sigma_j(\frac{N_{E/K}(a)}{\sgn (N_{E/K}(a))}) \rangle _v^{-z_j n(\eta_j)}.
\end{align*}
Note that  there exist  integers $D,D'\in \mathbb R,$ such that, for $m\geq 0$, $j\in \{1, \ldots, r\}$ and $aO_E\in S_{m, \mathcal S_v}(\frak I_j, \mathcal B, N')$, we have: $$v_F(\prod_{i=1}^s\rho_i(N_{E/K}(a))^{n(\psi_i)})\geq -Dm-D'.$$ \par
By  the proof of Lemma \ref{LemmaS3.2}, there exists an integer $C\geq 1$ such that, for $m_1, \ldots m_n\in \mathbb N,$ with $m_1+\ldots +m_n \geq 1$, $m\geq \frac{C(\sum_{i=1}^n \ell_p(m_i))}{p-1}$ and $\delta\in \mathbb Z,$ we have:
\begin{align*}
\sum_{aO_E\in S_{m, \mathcal S_v}(\frak I_j, \mathcal B,N')}\omega_v(\frac{N_{E/K}(a)}{\sgn(N_{E/K}(a))})^{-\delta} \prod^s_{i=1} \rho_i(\frac{N_{E/K}(a)}{\sgn(N_{E/K}(a))})^{n(\psi_i)} \times \\
 \times \prod^n_{j=1} \sigma_j \Big(\frac{N_{E/K}(a)}{\sgn (N_{E/K}(a)) \omega_v(\frac{N_{E/K}(a)}{\sgn  (N_{E/K}(a))})} \Big)^{m_j}=0.
\end{align*}

Now, let  $l\in \mathbb N,$  and select $k_1, \ldots, k_n \in \mathbb N\setminus\{0\}$ such that:\par
\noindent i) $\ell_p(k_i)\leq (l+1)(p-1),$\par
\noindent ii) $z_i\eta(\eta_i)+k_i\equiv 0\pmod{ p^{l+1}\mathbb Z_p}.$\par
\noindent  Then, there exists $D''\in \mathbb R,$ such that if $m\geq Cn(l+1) ,$ we have:
$$v_F(V_{m,j})\geq p^{l+1}{\rm Inf}\{ v_F(y_i), i=1, \ldots, n\}-D''m-D',$$
where $y_i=\sigma_i(\pi_v).$  We conclude as in the proof of Theorem \ref{TheoremS4.1}.
\end{proof}


\section{Examples}\label{Examples}${}$\par

\subsection{The case $A=\mathbb F_q[\theta]$}\label{Simple}${}$\par

We take $\pi=\theta^{-1}.$ In that case  $d_\infty= 1,$ ${\rm Pic}(A)=\{ 1\},$ and we set  $$A_+= \{a\in A, a\, {\rm monic}\}=\{ a\in A, \sgn(a)=1\}.$$
Let $\varphi:\mathbb F_q\rightarrow \mathbb F_q, x\mapsto x^p.$ Let $a=\sum_{i=0}^m \lambda_i \theta^i , \lambda_i \in \mathbb F_q,$ we set:
$$\forall j\geq 0, \quad \varphi^j(a)=\sum_{i=0}^m \lambda_i ^{p^j}\theta ^i.$$
Let $s\geq 0,$ $n\geq 1,$ and $e_1, \ldots, e_s, l_1, \ldots , l_n \in \mathbb N.$ Let $(F,v_F)$ be a complete field with respect to a non-trivial valuation $v_F$ and such that $F$ is an $\mathbb F_q$-algebra. Recall that $\mathbb S_{F,n}=F^\times \times \mathbb Z_p^n.$ Let $E/K$ be a finite extension. Let $E_1/E$ be a finite abelian extension and let $(., E_1/E)$ be the Artin symbol. Let $\chi: {\rm Gal}(E_1/E) \rightarrow  F^\times$ be a group homomorphism. If $\frak P$ is a maximal ideal of $O_E,$ we set:
\begin{align*}
\chi (\frak P)= \begin{cases} 0 & \mbox{if $\frak P$ is ramified in $E_1/E$,} \\ \chi ((\frak P, E_1/E)) & \mbox{otherwise.} \end{cases}
\end{align*}
If we apply Theorem \ref{TheoremS4.1}, we get:
\begin{corollary}\label{CorollaryS5.1} Let $x_1, \ldots, x_s \in F, $ and let $y_1, \ldots, y_n\in F$ such that $v_F(y_i)<0, i=1, \ldots, n.$ Then, for $z_1, \ldots, z_n \in \mathbb Z_p$ and for $x\in F,$ the following  sum converges in $F:$
\begin{align*}
\sum_{m\geq 0}(\sum_{\substack{\frak I\in \mathcal I(O_E), \frak I\subset O_E, \\ N_{E/K}(\frak I) = aA, \\ a\in A_+, \deg_\theta a=m}} \chi (\frak I) \prod_{i=1}^s \varphi^{e_i} (a)\mid_{\theta= x_i} \prod_{j=1}^n \langle \varphi^{l_j}(a)\mid_{\theta=y_j} \rangle ^{z_j}) x^m,
\end{align*}
where for $y\in F, v_\infty (y)<0,$ and for $a\in A_+,$ $\langle a(y) \rangle = \frac{a(y)}{y^{\deg_\theta a}}.$
\end{corollary}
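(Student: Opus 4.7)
The strategy is to recognize the stated sum as a direct instance of $\zeta_{\mathcal S,O_E}(\underline\rho,\underline\psi;\underline\sigma,\underline\eta;\chi;u)$ from Theorem \ref{TheoremS4.1}; once the dictionary is set up, convergence is immediate.

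First I would identify the admissible maps. Since $\Pic(A)$ is trivial and $\pi=\theta^{-1}$, every non-zero fractional ideal of $A$ has a unique monic generator, and the map $\mu:\mathcal I(A)\to\overline K_\infty^\times$, $\alpha A\mapsto \alpha/\sgn(\alpha)$, is a group homomorphism. The factorisation $\alpha/\sgn(\alpha)=\langle\alpha\rangle\pi^{v_\infty(\alpha)}$ shows that $\mu$ is algebraic admissible with $n(\mu)=1$ and $\gamma_\mu=\pi$, so I would set $\psi_i=\eta_j=\mu$ for every $i,j$. Then $K(\psi_i)=K$ and $K_\infty(\langle\eta_j\rangle)=K_\infty$, because the values $\mu(aA)=a$ are polynomials and $\langle a\rangle=a/\theta^{\deg_\theta a}$ already lies in $K_\infty$. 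I would then take $\rho_i:K\to F$ to be the $\mathbb F_p$-algebra homomorphism with $\rho_i(\theta)=x_i$ and $\rho_i|_{\mathbb F_q}=\varphi^{e_i}$, and $\sigma_j:K_\infty\to F$ to be the continuous $\mathbb F_p$-algebra homomorphism with $\sigma_j(\theta)=y_j$ (well-defined since $v_F(y_j)<0$ gives $v_F(\sigma_j(\pi))>0$) and $\sigma_j|_{\mathbb F_q}=\varphi^{l_j}$. For any $\mathfrak I\subset O_E$ with $N_{E/K}(\mathfrak I)=aA$, $a\in A_+$, a direct computation then gives
$$\rho_i(\psi_i(N_{E/K}(\mathfrak I)))=\varphi^{e_i}(a)|_{\theta=x_i},\qquad \sigma_j(\langle\eta_j(N_{E/K}(\mathfrak I))\rangle^{z_j})=\langle\varphi^{l_j}(a)|_{\theta=y_j}\rangle^{z_j},$$
the second identity using continuity of $\sigma_j$ on $1$-units.

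Next I would put $\chi$ into the framework of Section \ref{KeyLemma}. Take $\mathcal B\subset O_E$ to be a non-zero multiple of the conductor of $E_1/E$ and let $\mathcal S=\emptyset$. By global class field theory one can find an open subgroup $N=\prod_{i=1}^r N_i$ of finite index in $\prod_i E_{v_i}^\times$ sufficiently small that every $x\in E^\times$ with $x\equiv 1\pmod{\mathcal B}$ and $x\in N_i$ for each $i$ has trivial Artin symbol in $\Gal(E_1/E)$; this gives the invariance $\chi(\mathfrak I\mathfrak J)=\chi(\mathfrak I)$ for $\mathfrak J\in\mathcal P(\mathcal B,N)$ demanded in Section \ref{KeyLemma}. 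Because $\chi$ vanishes on primes ramified in $E_1/E$, restricting the sum to $\mathcal I(\mathcal B)$ costs nothing.

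With these identifications and $u=(x^{-1};-z_1,\ldots,-z_n)\in\mathbb S_{F,n}$ (for $x\in F^\times$), the inner sum of the corollary matches $\chi(\mathfrak I)\prod_{i=1}^s\rho_i(\psi_i(N_{E/K}(\mathfrak I)))\cdot N_{E/K}(\mathfrak I)^{-u}_{\underline\sigma,\underline\eta}$ term by term, and Theorem \ref{TheoremS4.1} delivers convergence in $F$; the case $x=0$ is trivial since only the $m=0$ term survives. The one genuinely non-formal ingredient I anticipate is the class-field-theoretic choice of the subgroup $N$ witnessing the invariance of $\chi$; once that is in place, the rest is transcription.
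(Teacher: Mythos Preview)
Your proposal is correct and follows exactly the paper's approach: the paper's entire proof is the sentence ``If we apply Theorem \ref{TheoremS4.1}, we get,'' and you have simply spelled out the dictionary that makes this work. One small caveat: your $\rho_i:K\to F$ with $\rho_i(\theta)=x_i$ is only a well-defined \emph{field} embedding when $x_i$ is transcendental over $\mathbb F_p$; for algebraic $x_i$ you only get an $\mathbb F_p$-algebra map $A\to F$. This is harmless---the only place the proof of Theorem \ref{TheoremS4.1} uses $\rho_i$ on $K$ is to obtain the linear bound $v_F(\rho_i(b))\ge -Dm-D'$, and that bound is immediate on $A$ for any $x_i\in F$---but it is worth flagging (alternatively, you can pass to the Tate algebra in auxiliary variables $t_i$ as in the proof of Proposition \ref{PropositionS5.1} and then specialise). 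Also, for the restriction to $\mathcal I(\mathcal B)$ to lose nothing you need $\mathcal B$ to have support \emph{equal} to the set of ramified primes, not merely to be a multiple of the conductor.
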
\par
Let $X$ is an indeterminate over $K,$ and write:
$$\forall a\in A, \quad a(\theta+X)= \sum_{m\geq 0} a^{(m)} X^m, a^{(m)}\in A.$$
Then, for all $m\geq 0,$ $.^{(m)}:A\rightarrow A$ is an $\mathbb F_q$-linear map and we have:
$$\forall k\geq 0, \quad (\theta^k)^{(m)} ={k \choose m} \theta^{k-m},$$
where
\begin{align*}
{k \choose m}= \begin{cases} 0 & \mbox{if $k<m$,} \\ \frac{k!}{m!(k-m)!} \pmod{p} & \mbox{if $k\geq m$.} \end{cases}
\end{align*}
\noindent Observe that:
$$\forall i\geq 0, \forall m\geq 0, \forall a\in A, \quad \varphi^i(a^{(m)})= (\varphi^i(a))^{(m)}.$$

\begin{proposition}\label{PropositionS5.1}
Let $x_1, \ldots, x_s \in F, $ $m_1, \ldots, m_s\in \mathbb N,$ and let $y_1, \ldots, y_n\in F$ such that $v_F(y_i)<0, i=1, \ldots, n.$ Then, for $z_1, \ldots, z_n \in \mathbb Z_p$ and for $x\in F,$ the following  sum converges in $F:$
\begin{align*}
S(m_1, \ldots, m_s):=\sum_{m\geq 0}(\sum_{\substack{\frak I\in \mathcal I(O_E), \frak I\subset O_E, \\ N_{E/K}(\frak I) = aA, \\ a\in A_+, \deg_\theta a=m}} \chi (\frak I) \prod_{i=1}^s \varphi^{e_i} (a^{(m_i)})|_{\theta= x_i} \prod_{j=1}^n \langle  \varphi^{l_j}(a)|_{\theta=y_j}  \rangle ^{z_j}) x^m.
\end{align*}

Furthermore:
\begin{align*}
\lim_{m_1+\ldots+m_s\rightarrow +\infty} S(m_1, \ldots, m_s)=0.
\end{align*}
\end{proposition}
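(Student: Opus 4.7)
The plan is to repackage the family $\{S(m_1,\ldots,m_s)\}_{\underline{m}\in\mathbb{N}^s}$ as the family of coefficients of a single generating series living in a Tate algebra over $F$, and then obtain both assertions from Corollary \ref{CorollaryS5.1} applied over that algebra. The key identity is the hyperderivative expansion: since $\varphi^{e_i}$ commutes with $a\mapsto a^{(m)}$ (as observed immediately before the proposition), for any $a\in A$ and any indeterminates $X_1,\ldots,X_s$ one has the polynomial identity
$$\prod_{i=1}^s \varphi^{e_i}(a)|_{\theta=x_i+X_i} \;=\; \sum_{(m_1,\ldots,m_s)\in \mathbb{N}^s}\left(\prod_{i=1}^s \varphi^{e_i}(a^{(m_i)})|_{\theta=x_i}\right)X_1^{m_1}\cdots X_s^{m_s},$$
the right-hand sum being finite for each fixed $a$ since $a^{(m)}=0$ whenever $m>\deg_\theta a$.

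Next I would introduce the Tate algebra $\mathbb{T}_s := F\{X_1,\ldots,X_s\}$ of strictly convergent power series on the closed unit polydisk over $F$, equipped with its Gauss valuation $v_{\mathbb{T}_s}$ extending $v_F$ and satisfying $v_{\mathbb{T}_s}(X_i)=0$. Then $\mathbb{T}_s$ is a complete valued $\mathbb{F}_q$-algebra (an integral domain carrying a genuine multiplicative valuation), each $x_i+X_i$ lies in $\mathbb{T}_s$, and each $y_j\in F\subset\mathbb{T}_s$ still satisfies $v_{\mathbb{T}_s}(y_j)=v_F(y_j)<0$. Applying Corollary \ref{CorollaryS5.1} with $F$ replaced by $\mathbb{T}_s$ and each $x_i$ replaced by $x_i+X_i$ would yield that the generating series
$$\widetilde{S} \;:=\; \sum_{m\geq 0}\left(\sum_{\substack{\frak{I}\subset O_E,\, N_{E/K}(\frak{I})=aA\\ a\in A_+,\, \deg_\theta a=m}} \chi(\frak{I})\prod_{i=1}^s \varphi^{e_i}(a)|_{\theta=x_i+X_i}\prod_{j=1}^n \langle \varphi^{l_j}(a)|_{\theta=y_j}\rangle^{z_j}\right)x^m$$
converges in $\mathbb{T}_s$. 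By the identity above, the coefficient of $X_1^{m_1}\cdots X_s^{m_s}$ in $\widetilde{S}$ is exactly $S(m_1,\ldots,m_s)$, so convergence of $\widetilde{S}$ in $\mathbb{T}_s$ gives at once that each $S(m_1,\ldots,m_s)$ defines an element of $F$, and — since the coefficients of any element of $\mathbb{T}_s$ tend to $0$ in Gauss norm — that $v_F(S(m_1,\ldots,m_s))\to+\infty$ as $m_1+\cdots+m_s\to +\infty$.

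The only point requiring genuine care is the transfer of Theorem \ref{TheoremS4.1} and Corollary \ref{CorollaryS5.1} from a complete valued field to the complete valued integral domain $\mathbb{T}_s$. Inspection of the proof of Theorem \ref{TheoremS4.1} shows that the properties of $v_F$ actually used are its multiplicativity on products, the non-archimedean triangle inequality, and completeness — all of which hold for the Gauss valuation on $\mathbb{T}_s$ — while the combinatorial vanishing input of Lemma \ref{LemmaS3.2} is also insensitive to whether the base is a field. Thus this transfer is routine and would complete the argument.
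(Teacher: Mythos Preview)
Your proposal is correct and follows essentially the same approach as the paper: introduce auxiliary indeterminates, apply Corollary \ref{CorollaryS5.1} with each $x_i$ replaced by $x_i+X_i$, and read off $S(m_1,\ldots,m_s)$ as the coefficient of $X_1^{m_1}\cdots X_s^{m_s}$ in the resulting element of the Tate algebra.

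The one point where you and the paper diverge is in handling the hypothesis that $F$ be a \emph{field}. You argue that the proof of Theorem \ref{TheoremS4.1} goes through verbatim for the integral domain $\mathbb{T}_s$ equipped with its (multiplicative) Gauss valuation; this is true but requires the reader to re-inspect that proof. The paper instead passes to $F'$, the completion of the field of fractions of $\mathbb{T}_s(F)$, which \emph{is} a complete valued field, applies Corollary \ref{CorollaryS5.1} there without modification, and then observes that since $\mathbb{T}_s(F)$ is closed in $F'$ and each partial sum lies in $\mathbb{T}_s(F)$, the limit $\widetilde{S}$ already belongs to $\mathbb{T}_s(F)$. This sidesteps the transfer issue entirely and is slightly cleaner; you may wish to adopt it.
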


\begin{proof} Let $t_1, \ldots, t_s$ be $s$ indeterminates over $F$ and let $\mathbb T_s(F)$ be the Tate algebra in the variables $t_1, \ldots, t_s$ with coefficients in $F.$ Let $F'$ be the completion of the field of fraction of $\mathbb T_s(F).$ Let $x_1, \ldots, x_s \in F, $  $y_1, \ldots, y_n\in F$ such that $v_F(y_i)<0, i=1, \ldots, n,$ $ z_1, \ldots, z_n \in \mathbb Z_p, $ and  $ x\in F.$ By Corollary \ref{CorollaryS5.1}, the following sum converges in $F':$
\begin{align*}
S:=\sum_{m\geq 0}(\sum_{\substack{\frak I\in \mathcal I(O_E), \frak I\subset O_E, \\ N_{E/K}(\frak I) = aA, \\ a\in A_+, \deg_\theta a=m}} \chi (\frak I) \prod_{i=1}^s \varphi^{e_i} (a)\mid_{\theta= t_i+x_i} \prod_{j=1}^n \langle \varphi^{l_j}(a)\mid_{\theta=y_j} \rangle ^{z_j}) x^m.
\end{align*}

\noindent Since $\mathbb T_s(F)$ is closed in $F',$ we get $S\in \mathbb T_s(F).$ For $i=1, \ldots, s,$ we have:
$$\varphi^{e_i}(a)\mid_{\theta= t_i+x_i}=\sum_{m\geq 0}\varphi^{e_i}(a^{(m)})\mid_{\theta=x_i} t_i^m.$$
But, we have:
$$S=\sum_{m_1, \ldots, m_s\in \mathbb N} S(m_1, \ldots, m_s) t_1^{m_1}\cdots t_s^{m_s},$$
where $S(m_1, \ldots, m_s)\in F,$ and $\lim_{m_1+\ldots+m_s\rightarrow +\infty} S(m_1, \ldots, m_s)=0.$ Therefore, we get:
\begin{align*}
S(m_1, \ldots, m_s)=\sum_{m\geq 0}(\sum_{\substack{\frak I\in \mathcal I(O_E), \frak I\subset O_E, \\ N_{E/K}(\frak I) = aA, \\ a\in A_+, \deg_\theta a=m}} \chi (\frak I) \prod_{i=1}^s \varphi^{e_i} (a^{(m_i)})|_{\theta= x_i} \prod_{j=1}^n \langle  \varphi^{l_j}(a)|_{\theta=y_j}  \rangle ^{z_j}) x^m.
\end{align*}
\end{proof}
We refer the interested reader to \cite{AP} and \cite{ANT} for the arithmetic properties of a special case of the above  sums.\par
${}$\par
Recall that  $\mathbb C_\infty$ be the completion of $\overline{K}_\infty.$ Let $t_1, \ldots, t_s, z$ be $s+1$ indeterminates over $\mathbb C_\infty,$ and let $\mathbb T $ be the Tate algebra in the variables $t_1, \ldots, t_s,z$ with coefficients in $\mathbb C_\infty.$ Let $F$ be the completion of the field of fractions of $\mathbb T .$  Take in Corollary \ref{CorollaryS5.1}  $E_1=E=K,$  $e_1=\ldots= e_s=l_1=\ldots= l_n=0,$ $n=1,$ $x_i=t_i,$ $x=z,$ $y_1=\theta,$ $z_1=y\in \mathbb Z_p,$  we get that the following infinite sum  converges in $\mathbb T$ and is in fact an entire function on $\mathbb C_\infty ^{s+1}:$
$$\sum_{m\geq 0}(\sum_{a\in A_+, \deg_\theta a=m} \frac{a(t_1)\cdots a(t_s)}{\langle a \rangle ^y})z^m .$$
The above sums were introduced in \cite{PEL}. In particular, for all $n\in \mathbb Z,$ the sum $$L(n; \underline{t}; z):=\sum_{m\geq 0}(\sum_{a\in A_+, \deg_\theta a=m} \frac{a(t_1)\cdots a(t_s)}{a^n})z^m \in \mathbb T$$ defines an entire function on $\mathbb C_\infty^{s+1}$. Observe that, by Corollary \ref{CorollaryS3.1} and Corollary \ref{CorollaryS3.2}, if $n\leq 0,$ this sum is finite and furthermore it vanishes at $z=1$ if $s-n\equiv 0\pmod{q-1},$ $s-n\geq 1$. Using a special case of Anderson's log-algebraicity Theorem for $\mathbb F_q[\theta]$ (\cite{AND}, \cite{AND2}, \cite{THA} paragraphs 8.9 and 8.10, see also \cite{APTR}, \cite{APT}, \cite{ATR} and the forthcoming work of M. Papanikolas \cite{PAP}), F. Pellarin proved (\cite{PEL}), for $s=1,$  a formula connecting $L(1; t_1; 1)$ to a special function introduced by G. Anderson and D. Thakur (\cite{AND&THA}). This formula reflects an analytic class number formula \`a  la Taelman (\cite{TAE1}, \cite{TAE2}, \cite{FAN1}, \cite{FAN2}) for $L(1; \underline{t}; 1)$ (see \cite{APT}). Such an analytic class number formula has been generalized in \cite{DEM} to a larger class of $L$-series (in particular for $L(n; \underline{t}; z)$, for $n\geq 1$). We also refer the reader to \cite{AP}, \cite{AP2},  \cite{GOS3}, \cite{PEPER}, \cite{PEPER2}, \cite{PER1}, \cite{PER2}, \cite{PER3}, \cite{TAE3} for various arithmetic and analytic properties of the series $L(n; \underline{t}; z), n\in \mathbb Z.$
${}$\par
${}$\par


Now, let $P$ be a monic irreducible polynomial in $A$ of degree $d.$ Let $\mathbb C_P$ be the completion of an algebraic closure $K_P$ of the $P$-adic completion of $K$. Let $A_P$ be the valuation ring of $K_P.$ Then:
$$\forall a\in A_P^\times, \quad a=\omega_P(a) \langle a \rangle _P,  \text{ with } v_P(\langle a \rangle _P-1)\geq 1, \omega_P(a) \in \mathbb F_{q^d}^\times.$$
Note that $\frac{1}{q^d-1} \in \mathbb Z_p^\times,$ thus:
$$\forall a \in A_P^\times, \quad \langle a \rangle _P= (a^{q^d-1})^{\frac{1}{q^d-1}}.$$
Let $(F,v_F)$ be a complete field with respect to a non-trivial valuation $v_F$ and such that $F$ is an $\mathbb F_{q^d}$-algebra. Let $\sigma: K_P\hookrightarrow F$ be a continuous $\mathbb F_p$-algebra homomorphism. Then:
 $$\forall a\in A_+, \quad \sigma(\langle a \rangle _P)=(\sigma(a)^{q^d-1})^{\frac {1}{q^d-1}}.$$
 Let $y=\sigma (\theta),$ then $v_F(y)\geq 0,$   and $y\not \in \mathbb F_{q^d}.$ Furthermore, there exists $i\geq 0,$ such that:
 $$\forall a\in A, \quad \sigma (a)=\varphi ^i(a)\mid_{\theta=y}.$$
 Thus:
 $$v_F(\varphi^i(P)\mid_{\theta=y})>0.$$
 Furthermore:
 $$\sigma (\langle a \rangle _P)= ((\varphi^i(a)\mid_{\theta=y})^{q^d-1})^{\frac{1}{q^d-1}}=: \langle \varphi^i(a)\mid_{\theta=y} \rangle _P.$$
 Let $E/K$ be a finite extension and let $E_1/E$ be a finite abelian extension. Let $\chi: {\rm Gal}(E_1/E) \rightarrow F^\times$ be a group homomorphism. Let $\mathcal S_P$ be the set of maximal ideals of $O_E$ above $P.$ If we apply Theorem \ref{TheoremS4.2}, then, by the proof of Proposition \ref{PropositionS5.1}, we get:
\begin{corollary}\label{CorollaryS5.2}
Let $e_1, \ldots, e_s,m_1, \ldots, m_s, l_1, \ldots, l_n \in \mathbb N.$ Let $x_1, \ldots, x_s\in F.$ Let $y_1, \ldots, y_n \in F\setminus \mathbb F_{q^d}$ such that $v_F(\varphi^{l_i}(P)\mid_{\theta= y_i})>0.$ Then for $z_1, \ldots, z_n \in \mathbb Z_p, $ for $\delta\in \mathbb Z,$ and for  $x\in F,$ the following sum converges in $F:$
\begin{align*}
\sum_{m\geq 0}(\sum_{\substack{\frak I\in \mathcal I_{\mathcal S_P}(O_E), \frak I\subset O_E, \\ N_{E/K}(\frak I) = aA, \\ a\in A_+, \deg_\theta a=m}} \omega_P(a)^{\delta} \chi (\frak I) \prod_{i=1}^s \varphi^{e_i} (a^{(m_i)})|_{\theta= x_i} \prod_{j=1}^n \langle  \varphi^{l_j}(a)|_{\theta=y_j}  \rangle _P^{z_j}) x^m.
\end{align*}
\end{corollary}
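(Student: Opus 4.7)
The proof is essentially a direct transposition of the argument for Proposition \ref{PropositionS5.1} to the $P$-adic setting, with Theorem \ref{TheoremS4.2} replacing Theorem \ref{TheoremS4.1}. The plan is the following.

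First, I would establish a $P$-adic analog of Corollary \ref{CorollaryS5.1}, namely that for any fixed $x'_1, \ldots, x'_s \in F$, the sum
\[
\sum_{m \geq 0}\Bigl(\sum_{\substack{\frak I \in \mathcal I_{\mathcal S_P}(O_E), \frak I \subset O_E \\ N_{E/K}(\frak I) = aA \\ a \in A_+, \deg_\theta a = m}} \omega_P(a)^\delta \chi(\frak I) \prod_{i=1}^s \varphi^{e_i}(a)\bigr|_{\theta = x'_i} \prod_{j=1}^n \langle \varphi^{l_j}(a)\bigr|_{\theta = y_j}\rangle_P^{z_j}\Bigr) x^m
\]
converges in $F$. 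To see this via Theorem \ref{TheoremS4.2}, I would use $v = P$ together with the admissible maps $\psi_i = \eta_j : \mathcal I(A) \to \overline{K}^\times$ defined by $aA \mapsto a$ for $a \in A_+$ (well-defined since $\mathrm{Pic}(A)$ is trivial and $A_+$ is a set of monic representatives), coupled with the $\mathbb F_p$-algebra homomorphism $\rho_i : K(\psi_i) \hookrightarrow F$ given by $\theta \mapsto x'_i$ precomposed with $\varphi^{e_i}$, and the continuous $\mathbb F_p$-algebra embedding $\sigma_j : K_P(\langle \eta_j\rangle_P) \hookrightarrow F$ induced by $\theta \mapsto y_j$ precomposed with $\varphi^{l_j}$. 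The hypothesis $v_F(\varphi^{l_j}(P)|_{\theta = y_j}) > 0$ is exactly what guarantees continuity of $\sigma_j$, since it forces the image of a uniformizer at $P$ to have positive valuation.

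Second, I would run the Tate-algebra trick from Proposition \ref{PropositionS5.1}. Introduce indeterminates $t_1, \ldots, t_s$ over $F$, let $\mathbb T_s(F)$ be the corresponding Tate algebra, and let $F'$ be the completion of its field of fractions (with its Gauss valuation). Applying the previous step with $F$ replaced by $F'$ and with $x'_i = t_i + x_i$, the sum
\[
S := \sum_{m \geq 0}\Bigl(\sum_{\substack{\frak I, \ldots}} \omega_P(a)^\delta \chi(\frak I) \prod_{i=1}^s \varphi^{e_i}(a)\bigr|_{\theta = t_i + x_i} \prod_{j=1}^n \langle \varphi^{l_j}(a)\bigr|_{\theta = y_j}\rangle_P^{z_j}\Bigr) x^m
\]
converges in $F'$. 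Because each summand already lies in $\mathbb T_s(F)$ and $\mathbb T_s(F)$ is closed in $F'$, the sum $S$ belongs to $\mathbb T_s(F)$; expanding it as $S = \sum_{m_1, \ldots, m_s} S(m_1, \ldots, m_s)\, t_1^{m_1} \cdots t_s^{m_s}$ with $S(m_1, \ldots, m_s) \in F$ tending to $0$ as $m_1 + \cdots + m_s \to \infty$, and inserting the expansion $\varphi^{e_i}(a)|_{\theta = t_i + x_i} = \sum_{m_i \geq 0} \varphi^{e_i}(a^{(m_i)})|_{\theta = x_i}\, t_i^{m_i}$, one reads off that $S(m_1, \ldots, m_s)$ is precisely the sum in the statement of the corollary. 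Convergence in $F$ follows.

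The only genuine technical point is the first step: one must verify that the maps $\psi_i$ and $\eta_j$ above are admissible in the sense of Section \ref{Notation} (immediate since $\psi_i(\alpha A) = \alpha = \psi_i(A)\,\alpha/\sgn(\alpha)$ for $\alpha \in A_+$ up to the usual sign conventions at $\infty$), and that the embeddings $\rho_i$ and $\sigma_j$ are well-defined with the right targets. Once the admissible structure is identified as above, Steps 2 and 3 are formal applications of the closedness of $\mathbb T_s(F)$ in $F'$ and the identification of coefficients, exactly as in the proof of Proposition \ref{PropositionS5.1}.
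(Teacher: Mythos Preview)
Your proposal is correct and follows exactly the approach indicated by the paper, which states (just before the corollary) that the result is obtained ``if we apply Theorem \ref{TheoremS4.2}, then, by the proof of Proposition \ref{PropositionS5.1}''. Your two-step outline---first invoking Theorem \ref{TheoremS4.2} to obtain the $P$-adic analog of Corollary \ref{CorollaryS5.1} (with the $\omega_P(a)^{\delta}$ factor absorbed via the $\underline{\delta}$-component of $u \in \mathbb S_{v,F,n}$), then running the Tate-algebra trick verbatim from the proof of Proposition \ref{PropositionS5.1}---is precisely this.
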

${}$\par
Let $t_1, \ldots, t_s, z$ be $s+1$ indeterminates over $\mathbb C_P,$ and let $\mathbb T_P $ be the Tate algebra in the variables $t_1, \ldots, t_s,z$ with coefficients in $\mathbb C_P.$ Let $F_P$ be the completion of the field of fractions of $\mathbb T_P .$ Take in the above Corollary  $E_1=E=K,$  $e_1=\ldots= e_s=m_1=\ldots=m_s=l_1=\ldots= l_n=0,$ $n=1,$ $x_i=t_i,$ $x=z,$ $y_1=\theta,$ $z_1=y\in \mathbb Z_p,$ $\delta\in \mathbb Z,$ we get that the following sum  converges in $\mathbb T_P$ and is in fact an entire function on $\mathbb C_P^{s+1}:$
$$\sum_{m\geq 0}(\sum_{a\in A_+, \deg_\theta a=m, a\not \equiv 0\pmod{P}}\omega_P(a)^{\delta} \frac{a(t_1)\cdots a(t_s)}{\langle a \rangle _P^y})z^m .$$
 In particular, $\forall n\in \mathbb Z,$ the sum $$L_P(n; \underline{t}; z):=\sum_{m\geq 0}(\sum_{a\in A_+, \deg_\theta a=m, a\not \equiv 0\pmod{P}} \frac{a(t_1)\cdots a(t_s)}{a^n})z^m \in \mathbb T_P$$ defines an entire function on $\mathbb C_P^{s+1}.$ We  refer the reader to \cite{AT}, \cite{ATR}, for various arithmetic  properties of  ``special values" of the series $L_P(n; \underline{t}; z), n\in \mathbb Z.$

\subsection{Twisted  Goss zeta functions}\label{GOSS}${}$\par

In this example, $A$ is ``general". Recall that  $\mathbb C_\infty$ be the completion of $\overline{K}_\infty.$  Let $s\geq 1$ be an integer. Let $\mathbb F\subset \overline{\mathbb F}_q$ be a  field containing $\mathbb F_\infty.$ Let $k_s(\mathbb F)$ be  defined as follows:\par
\noindent - if $s=1,$ $k(\mathbb F)=\mathbb F ((y_1))$ where $y_1$ is an indeterminate over $\mathbb F,$\par
\noindent - if $s\geq 2, $ let $y_s$ be an indeterminate over $k_{s-1}(\mathbb F),$ and set $k_s(\mathbb F)=k_{s-1}(\mathbb F)((y_s)).$\par
\noindent Observe that $\mathbb F$ is algebraically closed in $k_s(\mathbb F).$\par
  Let $L/K_\infty$ be a finite extension and let $O_L$ be the valuation ring of $L.$ Then:
$$O_L=\mathbb F_L[[\pi_L]],$$
where $\mathbb F_L$ is the residue field of $L,$ and $\mathbb \pi_L$ is a prime of $O_L.$ Let's consider the following tensor product:
$$k_s(\mathbb F_\infty)\otimes_{\mathbb F_\infty}O_L.$$
This ring can be identified naturally with $ k_s(\mathbb F_L)\otimes_{\mathbb F_L}O_L.$ Any element $f\in  k_s(\mathbb F_L)\otimes_{\mathbb F_L}O_L$ can be written in a unique way:
$$\sum_{i\geq 0}\alpha_i \otimes\pi_L ^i, \quad \alpha_i \in k_s(\mathbb F_L).$$
We set:
$$v_\infty(f)={\rm Inf} \{ v_\infty(\pi_L)i, i \in \mathbb N,\alpha_i\not =0\}.$$
Then $v_\infty$ is a valuation on $k_s(\mathbb F_\infty)\otimes_{\mathbb F_\infty}O_L$ which does not depend on the choice of $\pi_L.$ We denote by $k_s(\mathbb F_\infty)\widehat{\otimes}_{\mathbb F_\infty}O_L$ the completion of $k_s(\mathbb F_\infty)\otimes_{\mathbb F_\infty}O_L$ for $v_\infty.$ Finally denote by $L_s$  the completion (for $v_\infty$) of the field of fractions of $k_s(\mathbb F_\infty)\widehat{\otimes}_{\mathbb F_\infty}O_L.$ If we identify $1\otimes \mathbb F_L$ with $\mathbb F_L,$ and  $1\otimes \pi_L$ with $\pi_L$ which is thus an indeterminate over $ k_s(\mathbb F_L),$ we have:
$$L_s= k_s(\mathbb F_L)((\pi_L)).$$
If $L\subset L',$ then we have a natural injective map compatible with $v_\infty$:
$$L_s\hookrightarrow L'_s.$$
We denote by $\mathbb C_{\infty,s}$ the completion (for $v_\infty$) of the inductive limit $\varinjlim_{L/K_\infty \, {\rm finite}}L_s.$ Note that $\mathbb C_\infty, k_s(\overline{\mathbb F_q})\subset \mathbb C_{\infty,s},$ and the residue field of $\mathbb C_{\infty,s}$ is $k_s(\overline{\mathbb F_q}).$\par
Let $z$ be an indeterminate over $\mathbb C_{\infty,s},$ we denote by $\mathbb T_z(\mathbb C_{\infty,s})$ the Tate algebra in the variable $z$ with coefficients in $\mathbb C_{\infty,s}.$\par
${}$\par
We fix  a $K$-embedding of $\overline{K}$ in $\mathbb C_\infty.$ As in  section \ref{Notation}, we consider $[.]$ the Goss admissible map,   i.e. for  $I\in \mathcal I(A),$ we have:
$$[I]= \langle I \rangle \pi^{\frac{-\deg I}{d_\infty}}\in \pi^{\mathbb Q}U_\infty,$$
where $\pi$ is our fixed prime of $K_\infty.$ Note that  $V=K([I], I\in \mathcal I(A))$ is a finite extension of $K.$ Recall that $V$ is viewed as a subfield of $K_\infty ([I], I\in \mathcal I(A)).$ Let $O_V$ be the integral closure of $A$ in $V.$  Since, for $a\in K^\times,$ we have $[aA]=a/\sgn(a)$, we deduce that if $I$ is a non-zero ideal of $A$ then $[I]\in O_V.$ Also recall that $\mathbb F_\infty\subset O_V$ is algebraically closed in $V.$\par
Let $s\in \mathbb N.$ For $i=1, \ldots, s,$ let $\rho_i: K_\infty ([I], I\in \mathcal I(A))\rightarrow \mathbb F_\infty((y_i))$ be a continuous $\mathbb F_p$-algebra homomorphism. Let $\mathbb T_{\underline{\rho},z}(\mathbb C_\infty)$ be the closure of $\mathbb C_\infty[z][\rho_i(O_V), i=1, \ldots, s]$ in $\mathbb T_z(\mathbb C_{\infty,s}).$ Let also $\mathbb T_{\underline{\rho}}(\mathbb C_\infty)$ be the closure of $\mathbb C_\infty[\rho_i(O_V), i=1, \ldots, s]$ in $\mathbb C_{\infty,s}.$ \par
Let's observe that $\mathbb T_{\underline{\rho},z}(\mathbb C_\infty)$ is an affinoid algebra over $\mathbb C_\infty$ (this is of also the case for $\mathbb T_{\underline{\rho}}(\mathbb C_\infty)$).  Indeed, select $\theta\in A\setminus \mathbb F_q.$ Then there exist $v_1, \ldots, v_r \in  O_V$ such that:
$$O_V=\oplus_{j=1}^r \mathbb F_\infty[\theta] v_j.$$
For $i=1, \ldots, s,$ let $t_i=\rho_i(\theta).$ Then $t_1, \ldots, t_s,z$ are $s+1$ indeterminates over $\mathbb C_\infty.$ Let $\mathbb T_{s,z}(\mathbb C_\infty)$ be the Tate algebra in the variables $t_1, \ldots, t_s, z$ with coefficients in $\mathbb C_\infty.$ Clearly:
$$\mathbb T_{s,z}(\mathbb C_\infty) \subset \mathbb T_z(\mathbb C_{\infty,s}),$$
$$\mathbb T_{\underline{\rho},z}(\mathbb C_\infty)= \sum_{1\leq i_1, \ldots, i_s\leq r}\mathbb T_{s,z}(\mathbb C_\infty)\rho_1(v_{i_1})\cdots \rho_s(v_{i_s}).$$
If we apply Theorem \ref{TheoremS4.1}, we get as a special case:
\begin{corollary}\label{CorollaryS5.3} Let $E/K$ be a finite extension. Let $x\in \mathbb Z_p,$ the following sum converges in $\mathbb T_{\underline{\rho},z}(\mathbb C_\infty):$\par
$$\sum_{m\geq 0}( \sum_{\substack{\frak I\in \mathcal I(\mathcal O_E), \frak I\subset O_E \\ \deg N_{E/K} (\frak I)=m}} \rho_1([N_{E/K}(\frak I)]) \cdots \rho_s([N_{E/K}(\frak I)]) \langle N_{E/K}(\frak I) \rangle ^xz^m.$$
Furthermore, as a function in $z,$ it defines an entire function on $\mathbb C_\infty$ with values in $\mathbb T_{\underline{\rho}}(\mathbb C_\infty).$
\end{corollary}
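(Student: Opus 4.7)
The plan is to deduce Corollary \ref{CorollaryS5.3} from Theorem \ref{TheoremS4.1}, and then upgrade convergence in a single Tate algebra to an entire function by invoking the exponential valuation estimate of Remark \ref{RemarkS4.1}.

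I would apply Theorem \ref{TheoremS4.1} with $\mathcal{B}=O_E$, $\mathcal{S}=\emptyset$, $N=\prod_{i=1}^r E_{v_i}^\times$, $\chi\equiv 1$, and with admissible maps $\psi_1=\cdots=\psi_s=\eta_1=[\cdot]$, the Goss admissible map recalled in Section \ref{Notation}, taking $n=1$. Since $[\alpha A] = \langle\alpha\rangle\pi^{v_\infty(\alpha)}$ for $\alpha\in K^\times$, we have $n([\cdot])=1\in\mathbb{N}$ and $\gamma_{[\cdot]}=\pi$, whence $\gamma_{\psi_i}\pi^{-n(\psi_i)}=1$ and $K(\psi_i)=K(\mathbb{F}_\infty,[I],\,I\in\mathcal{I}(A))\subset K_\infty([I])$. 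The $\rho_i$ of the theorem are obtained by restricting the given $\rho_i$'s, and $\sigma_1$ is taken as the canonical inclusion $K_\infty(\langle\eta_1\rangle)\hookrightarrow\mathbb{C}_\infty\hookrightarrow F$, where $F$ denotes the completed field of fractions of the Tate algebra $\mathbb{T}_z(\mathbb{C}_{\infty,s})$ equipped with the extension $v_F$ of the Gauss norm. Finally, set $u=(z^{-1};-x)\in\mathbb{S}_{F,1}$. Since $\langle[I]\rangle=\langle I\rangle$, one checks that
\[
N_{E/K}(\frak I)_{\underline{\sigma},\underline{\eta}}^{-u} \;=\; z^{\deg N_{E/K}(\frak I)}\,\langle N_{E/K}(\frak I)\rangle^{x},
\]
so the series from Theorem \ref{TheoremS4.1} coincides term-by-term with that of Corollary \ref{CorollaryS5.3}, and Theorem \ref{TheoremS4.1} yields its convergence in $F$.

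To transfer this convergence to $\mathbb{T}_{\underline{\rho},z}(\mathbb{C}_\infty)$, I would note that each coefficient $c_m$ of $z^m$ is a finite sum of products $\rho_1([N_{E/K}(\frak I)])\cdots\rho_s([N_{E/K}(\frak I)])\langle N_{E/K}(\frak I)\rangle^x$, so $c_m\in\mathbb{T}_{\underline{\rho}}(\mathbb{C}_\infty)$ because $[J]\in O_V$ for any integral ideal $J\subset A$ and $\langle J\rangle^x\in U_\infty\subset\mathbb{C}_\infty$. As $v_F$ restricts to the Gauss norm on $\mathbb{T}_z(\mathbb{C}_{\infty,s})$ and $\mathbb{T}_{\underline{\rho},z}(\mathbb{C}_\infty)$ is by definition closed in $\mathbb{T}_z(\mathbb{C}_{\infty,s})$, the limit lies in $\mathbb{T}_{\underline{\rho},z}(\mathbb{C}_\infty)$. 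For entirety, Remark \ref{RemarkS4.1} asserts that $v_F(c_m)$ grows exponentially in $m$; since $v_F$ restricts to $v_\infty$ on $\mathbb{C}_{\infty,s}$, we have $v_\infty(c_m)\to+\infty$ super-linearly, and therefore for every $z_0\in\mathbb{C}_\infty$,
\[
v_\infty(c_m z_0^{m})=v_\infty(c_m)+m\,v_\infty(z_0)\longrightarrow +\infty,
\]
so $\sum_{m\geq 0} c_m z_0^{m}$ converges in the complete subalgebra $\mathbb{T}_{\underline{\rho}}(\mathbb{C}_\infty)$, producing the desired entire function.

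The main point that requires care is the setup of $F$ and the coherence of the various valuations involved: one must verify that the Gauss norm on $\mathbb{T}_z(\mathbb{C}_{\infty,s})$ extends uniquely to the completed fraction field $F$ (so that $z^{-1}\in F^\times$ is legitimate) and that this extension recovers both the Gauss norm on $\mathbb{T}_z(\mathbb{C}_{\infty,s})$ and $v_\infty$ on $\mathbb{C}_{\infty,s}$. Once this bookkeeping is done, the rest is a direct unpacking of the definitions combined with Theorem \ref{TheoremS4.1} and Remark \ref{RemarkS4.1}.
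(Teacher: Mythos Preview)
Your proposal is correct and follows essentially the same approach as the paper, which simply states that Corollary~\ref{CorollaryS5.3} is obtained as a special case of Theorem~\ref{TheoremS4.1}. You have carefully filled in the choices of $F$, $\psi_i$, $\eta_1$, $\sigma_1$, $\chi$, $\mathcal{B}$, $N$, $\mathcal{S}$ and $u=(z^{-1};-x)$, verified the identification $N_{E/K}(\frak I)_{\underline{\sigma},\underline{\eta}}^{-u}=z^{m}\langle N_{E/K}(\frak I)\rangle^{x}$, and used Remark~\ref{RemarkS4.1} for the entirety statement; the bookkeeping about the Gauss valuation extending to the completed fraction field and restricting correctly is accurate.
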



\subsection{$A$-harmonic series attached to some admissible maps}\label{ANDERSON}${}$\par
 In this example, we work in the case $A$ general. Let $\eta : \mathcal I(A) \rightarrow \overline{K}_\infty^\times$ be an admissible map such that $n(\eta)\in \mathbb Z. $ Recall that there exist an open subgroup of finite index  $M(\eta)\subset K_\infty^\times,$ and an element $\alpha(\eta)\in \overline{K}_\infty^\times,$ such that:

$$\forall I\in \mathcal I(A), \forall a \in K^\times \cap M(\eta), \quad \eta (Ia)=\eta (I) (\frac{a}{\sgn(a)})^{n(\eta)} \alpha(\eta)^{v_\infty(a)}.$$
Note that there exists an open subgroup of finite index $N\subset K_\infty^\times,$ $N\subset M(\eta),$ such that:
$$\forall a \in K^\times \cap N, \quad \sgn (\alpha(\eta))^{v_\infty(a)}=1.$$
Let $\chi :\mathcal I(A) \rightarrow  \overline{K}_\infty^\times $ be the map defined by:
$$\forall I\in \mathcal I(A), \quad \chi (I)=\pi ^{v_\infty(\eta (I))+ \frac{\deg I}{d_\infty}(n(\eta)+v_\infty(\alpha(\eta)))}\sgn (\eta(I)).$$
If we set:
$$\mathcal P=\{ xA, x\in K^\times \cap N\}.$$ Then $\frac{\mathcal I(A)}{\mathcal P}$ is a finite abelian group and:
$$\forall I\in \mathcal I(A), \forall J\in \mathcal P, \quad \chi (IJ)= \chi (I).$$
Let $n\in \mathbb Z.$ For $u=(x, y)\in \mathbb C_\infty^\times \times \mathbb Z_p=:\mathbb S_\infty,$ let's set:
$$L_{\eta, A}(\chi^n; u)=\sum_{m\geq 0}\sum_{\substack{I\in \mathcal I(A), I\subset A \\ \deg I=m}} \chi^n (I)\langle \eta(I) \rangle ^{-y} x^{-m}.$$
Then, by Theorem \ref{TheoremS4.1}, $L_{\eta, A} (\chi^n; .)$ converges on $\mathbb S_\infty.$ Now, let $x\in \mathbb C_\infty^\times,$ and observe that:
$$L_{\eta, A} (\chi^{-n}; (\pi^{\frac{-n}{d_\infty}(n(\eta)+v_\infty(\alpha(\eta))} x,n))=\sum_{m\geq 0}\sum_{\substack{I\in \mathcal I(A), I\subset A \\ \deg I=m}} \eta(I)^{-n} x^{-m}.$$
Thus, for all $n\in \mathbb Z,$ the following function in the variable $z$ is entire on $\mathbb C_\infty$:
$$Z_{\eta,A}(n;z)=\sum_{m\geq 0}\sum_{\substack{I\in \mathcal I(A), I\subset A \\ \deg I=m}} \eta(I)^{-n} z^m .$$
Observe that, by Lemma \ref{LemmaS3.2}, if $m\, n(\eta)\leq 0,$ we have:
$$Z_{\eta,A}(m;z)\in K(\eta(I), I\in \mathcal I(A))[z].$$
Let's furthermore assume that $\eta$ is an algebraic admissible map. Let $E=K(\eta(I), I\in \mathcal I(A))$ which is a finite extension of $K,$ and let $E_\infty =E\otimes_KK_\infty.$ Let $z$ be an indeterminate over $K_\infty,$ following \cite{ATR}, we denote by $\mathbb T_z(E_\infty)$ the closure of $E_\infty[z]$ in $E\otimes_K \widetilde{K_\infty},$ where $\widetilde{K_\infty}$ denotes the completion of the field of fractions of the Tate algebra in the variable $z$ with coefficients in $K_\infty.$ From the above discussion, we get:
\begin{corollary}\label{CorollaryS5.4} Let $m\in \mathbb Z.$ The following sum converges in $\mathbb T_z(E_\infty):$
$$\zeta_{\eta, A}(m; z):=\sum_{k\geq 0}\sum_{\substack{I\in \mathcal I(A), I\subset A \\ \deg I=m}} \eta(I)^{-m}  z^k .$$
Furthermore, if $m\,  n(\eta)\leq 0,$ we have:
$$\zeta_{\eta, A}(m; z)\in E[z].$$
\end{corollary}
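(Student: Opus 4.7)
The plan is to pass from the prior observation that $Z_{\eta,A}(m;z)$ is entire on $\mathbb C_\infty$ (at the fixed embedding $E \hookrightarrow \overline K_\infty$) to simultaneous control at every place of $E$ above $\infty$, by a Galois-twisting argument. Writing $E_\infty \simeq \prod_{i=1}^{r} E_{\infty_i}$ as a product of completions at the places $\infty_1,\ldots,\infty_r$ of $E$ above $\infty$, an element of $\mathbb T_z(E_\infty)$ is a power series $\sum_k c_k z^k$ with $c_k \in E_\infty$ satisfying $v_{\infty_i}(c_k) \to +\infty$ for each $i$. Since $\eta$ is algebraic, the coefficient $c_k := \sum_{I \subset A,\ \deg I = k} \eta(I)^{-m}$ lies in $E \subset E_\infty$; it therefore suffices to prove $v_{\infty_i}(c_k) \to +\infty$ for every $i$.

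For each $i$ I would fix a $K$-embedding $\sigma_i : E \hookrightarrow \overline K_\infty$ realizing the place $\infty_i$. Such a $\sigma_i$ extends to a continuous $K_\infty$-algebra embedding $E_{\infty_i} \hookrightarrow \overline K_\infty$ and therefore fixes $K_\infty$ pointwise; in particular $\sigma_i(\langle \alpha \rangle) = \langle \alpha \rangle$ and $\sigma_i(\sgn(\alpha)) = \sgn(\alpha)$ for $\alpha \in K^\times$. Applying $\sigma_i$ to the defining identity of an admissible map for $\eta$ shows that $\sigma_i \circ \eta \colon \mathcal I(A) \to \overline K_\infty$ is again an algebraic admissible map with the same integer parameter $n(\eta)$ and with $\alpha(\sigma_i \circ \eta) = \sigma_i(\alpha(\eta))$.

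I would then invoke the observation preceding the corollary (which itself is a consequence of Theorem \ref{TheoremS4.1}) applied to $\sigma_i \circ \eta$ in place of $\eta$: the power series $Z_{\sigma_i \circ \eta, A}(m; z) = \sum_k \sigma_i(c_k) z^k$ defines an entire function of $z$ on $\mathbb C_\infty$, equivalently $v_\infty(\sigma_i(c_k))/k \to +\infty$. Since $v_\infty \circ \sigma_i = v_{\infty_i}$ on $E$, this gives $v_{\infty_i}(c_k) \to +\infty$ for each $i$, which is far more than is needed for convergence of $\zeta_{\eta,A}(m;z)$ in $\mathbb T_z(E_\infty)$.

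For the polynomial statement, the same observation (via Lemma \ref{LemmaS3.2}) already asserts $Z_{\eta,A}(m;z) \in K(\eta(I),\, I \in \mathcal I(A))[z] = E[z]$ whenever $m\,n(\eta) \leq 0$, and the inclusion $E[z] \subset \mathbb T_z(E_\infty)$ finishes the proof. The only non-routine verification is the preservation of admissibility under $\sigma_i$; this turns entirely on the $K_\infty$-linearity of the extended embedding, and is the single place where the algebraicity hypothesis on $\eta$ is essential.
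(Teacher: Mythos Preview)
Your proof is correct and is essentially an elaboration of what the paper leaves implicit: the paper states the corollary with the words ``From the above discussion, we get'' and gives no further argument, relying on the preceding observations (Theorem~\ref{TheoremS4.1} for convergence, Lemma~\ref{LemmaS3.2} for polynomiality). You have rightly noticed that the preceding discussion only literally establishes entirety at the \emph{fixed} embedding $E\hookrightarrow\overline{K}_\infty$, whereas membership in $\mathbb T_z(E_\infty)$ demands control at every place of $E$ above~$\infty$; your Galois-twisting device (replace $\eta$ by $\sigma_i\circ\eta$ and rerun the argument) is exactly the right way to close this gap and is surely what the authors have in mind.

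One small imprecision: you write $\alpha(\sigma_i\circ\eta)=\sigma_i(\alpha(\eta))$, but $\alpha(\eta)=\gamma_\eta$ lies a priori only in $\overline{K}_\infty^\times$, not in $E$ or $E_{\infty_i}$, so $\sigma_i(\gamma_\eta)$ is not directly defined. What \emph{is} in $E$ is the ratio $R_\alpha:=\eta(I\alpha)/\eta(I)$, and after shrinking $M(\eta)$ so that $\sgn(\alpha)=1$ for $\alpha\in K^\times\cap M(\eta)$ with $v_\infty(\alpha)=0$, one has $R_\alpha=\alpha^{n(\eta)}\in K$ on that subgroup; from this and multiplicativity one extracts a $\gamma'\in\overline{K}_\infty^\times$ with $\sigma_i(R_\alpha)=\langle\alpha\rangle^{n(\eta)}(\gamma')^{v_\infty(\alpha)}$ for all $\alpha$. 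This repairs the sentence without changing your strategy. The polynomiality clause is, as you say, already recorded verbatim before the corollary.
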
\par
${}$\par
Let $v$ be a finite place of $K,$ and let $P_v$ be the maximal ideal of $A$ associated to $v.$ Let $\mathcal I(P_v)$ be the group of fractional ideals of $A$ which are relatively prime to $P_v.$ Let $\mathbb C_v$ be the $v$-adic completion of $\overline{K}_v.$ Let $\eta$ be our  admissible map introduced in the beginning of the paragraph, and we assume that $\eta$ is algebraic.  Let $\chi: \mathcal I(P_v)\rightarrow \overline{K}_v$ such that:
$$\forall I\in \mathcal I(P_v), \quad \chi (I) = \omega_v(\eta(I))\pi_v^{v(\eta (I))+\frac{\deg I}{d_\infty} v(\alpha (\eta))}.$$
Note that there exists an open subgroup of finite index $N'\subset K_\infty^\times,$ $N'\subset M(\eta),$ such that:
$$\forall a \in K^\times \cap N', \quad \omega_v(\alpha(\eta))^{v_\infty(a)}=1.$$
Then:
$$\forall I\in I(P_v), \forall J\in \mathcal P', \quad \chi (IJ)=\chi(I),$$
where $$\mathcal P'=\{xA\in \mathcal I(P_v), x\equiv 1\pmod{P_v}, x\in K^\times \cap N'\cap {\rm Ker} \, \sgn\}.$$
Let $n\in \mathbb Z.$ For $u=(x, y)\in \mathbb C_v^\times \times \mathbb Z_p,$ let's set:
$$L_{v,\eta, A}(\chi^n; u)=\sum_{m\geq 0}\sum_{\substack{I\in \mathcal I( P_v), I\subset A \\ \deg I=m}} \chi^n (I)\langle \eta(I) \rangle _v^{-y} x^{-m}.$$
Then, by Theorem \ref{TheoremS4.2}, $L_{v,\eta, A} (\chi^n; .)$ converges on $\mathbb C_v^\times \times \mathbb Z_p.$ Now, let $x\in \mathbb C_v^\times,$ and observe that:
$$L_{v,\eta, A} (\chi^{-n}; ( \pi_v^{\frac{-n}{d_\infty}v(\alpha(\eta))}x,n))=\sum_{m\geq 0}\sum_{\substack{I\in \mathcal I( P_v), I\subset A \\ \deg I=m}} \eta(I)^{-n} x^{-m}.$$
Thus, for all $n\in \mathbb Z,$ the following function in the variable $z$ is entire on $\mathbb C_v$:
$$Z_{v,\eta,A}(n;z)=\sum_{m\geq 0}\sum_{\substack{I\in \mathcal I( P_v), I\subset A \\ \deg I=m}} \eta(I)^{-n} z^m .$$
By Lemma \ref{LemmaS3.2}, if $m\, n(\eta)\leq 0,$ then:
$$Z_{v,\eta,A}(m;z)\in K(\eta(I), I\in \mathcal I(A))[z].$$
In particular, if $P_v=\alpha A,$ $\alpha \in M(\eta)$ (there exist infinitely many such maximal ideals by Chebotarev's density Theorem), then, for $m\in \mathbb Z$ we get:
$$Z_{v,\eta,A}(m;z)= \Big(1-(\frac{\alpha}{\sgn (\alpha)})^{-mn(\eta)}\alpha(\eta)^{m\frac{\deg P_v}{d_\infty}} z^{\deg P_v} \Big) Z_{\eta,A}(m;z).$$
We refer the interested reader to a forthcoming work of the authors dedicated to the arithmetic of such $A$-harmonic series (\cite{ANT2}).


\section{Multiple several variable twisted zeta functions}\label{MSVTZF}${}$\par

We briefly explain how the constructions in Section \ref{SVTZF} can easily be generalized in the spirit of Thakur's construction of positive characteristic multiple zeta values; the reader interested by the arithmetic of multiple zeta values for $K=\mathbb F_q(\theta)$ is referred to (this list is not exhaustive): \cite{AND&THA2}, \cite{THA2},\cite{THA3}, \cite{LATH}, \cite{LATH2}, \cite{CHA}, \cite{CPY}, \cite{PEL2}. We keep the notation of Section \ref{SVTZF}.\par
${}$\par
Let $r\in \mathbb N,r\geq 1.$ Let $s_1, \ldots, s_r\in \mathbb N$ and $n_1, \ldots, n_r\in \mathbb N\setminus\{0\}.$ Let $(F,v_F)$ be a field which is complete with respect to a non-trivial valuation $v_F: F\rightarrow \mathbb R\cup\{+\infty\}$ and such that $F$ is an $\overline{\mathbb F}_q$-algebra.  We set:
 $$\mathbb S_{F,(n_1,\ldots, n_r)}=(F^\times\times \mathbb Z_p^{n_1})\times \cdots \times (F^\times\times  \mathbb Z_p^{n_r}).$$\par
For $i=1, \ldots, r,$ let $\psi_{1, i}, \ldots, \psi_{s_i,i}$ be $s_i$ admissible maps such that $n(\psi_{j,i})\in \mathbb N.$ For $i=1, \ldots, r, j=1, \ldots,s_i,$ let $\rho_{j,i}:K(\psi_{j,i})\rightarrow F$ be an $\mathbb F_p$-algebra homomorphism. For $i=1, \ldots,r,$ we set:
$$\forall I\in \mathcal I(A), \quad \underline{\rho}_i(\underline{\psi}_i(I))=\prod_{j=1}^{s_i}\rho_{j,i}(\psi_{j,i}(I))\in F.$$\par
Let $E/K$ be a finite extension. Let $\mathcal B\subset O_E$ be a non-zero ideal. Let $v_1, \ldots, v_n$ be the places of $E$ above $\infty,$ and for $ j=1, \ldots, n,$ let $N_{j}$ be an open subgroup of finite index of $E_{v_j}^\times$ ($E_{v_j}$ is the $v_j$-adic completion of $E$), and we set : $N=\prod_{j=1, \ldots, n} N_{j}.$ Let $\mathcal S$ be a finite set, possibly empty,  of maximal ideal of $O_E$ which are relatively prime to $\mathcal B.$ For $i=1, \ldots, r,$ let $\chi_i: \mathcal I(\mathcal B) \rightarrow F$ be a map such that:
$$\forall \frak I\in\mathcal I_(\mathcal B), \forall \frak J\in  \mathcal P(\mathcal B, N), \quad \chi_i(\frak I
\frak J)=\chi_i(\frak I).$$\par
For $i=1, \ldots, r,$  let $\eta_{1, i}, \ldots, \eta_{n_i,i}$ be $n_i$ admissible maps  with values in $\overline{K}_\infty^\times.$ For $i=1, \ldots, r, j=1, \ldots,n_i,$ let $\sigma_{j,i}:K(\langle \eta_{j,i} \rangle )\rightarrow F$ be a continuous $\mathbb F_p$-algebra homomorphism. For $i=1, \ldots,r,$ let $u_i=(x_i; z_{1, i},\ldots, z_{n_i,i})\in F^\times \times \mathbb Z_p^{n_i},$ for $I\in \mathcal I(A)$, we set:
$$I_{\underline{\sigma}_i, \underline{\eta}_i}^{u_i}= x_i^{\deg I}\prod_{j=1}^{n_i}\sigma_{j,i}(\langle \eta_{j,i}(I) \rangle )^{z_{j,i}}\in F^\times.$$\par
We fix $i\in \{1, \ldots, r\}.$ Let $d\geq 0$ be an integer, for $u_i\in F^\times \times \mathbb Z_p^{n_i},$  we set:
$$S_{d; \mathcal S, O_E}(\underline{\rho}_i, \underline{\psi}_i; \underline{\sigma}_i, \underline{\eta}_i;\chi_i; u_i )=\sum_{\substack{\frak I \in \mathcal I_{\mathcal S}(\mathcal B), \frak I\subset O_E \\ \deg N_{E/K}(\frak I)=d}} \chi_i(\frak I) \underline{\rho}_i(\underline{\psi}_i(N_{E/K}(\frak I)))N_{E/K}(\frak I)_{\underline{\sigma}_i, \underline{\eta}_i}^{-u_i}.$$
By the proof of Theorem \ref{TheoremS4.1} and Remark \ref{RemarkS4.1}, we get:
\begin{corollary}\label{CorollaryS6.1} Let $u=(u_1, \ldots, u_r)\in \mathbb S_{F, (n_1, \ldots, n_r)},$ then the following sums converge in $F:$
$$\sum_{d\geq 0}S_{d; \mathcal S, O_E}(\underline{\rho}_1, \underline{\psi}_1; \underline{\sigma}_1, \underline{\eta}_1;\chi_1; u_1 )\sum_{d>d_2>\ldots >d_{r}\geq 0}\prod_{j=2}^rS_{d_j; \mathcal S, O_E}(\underline{\rho}_j, \underline{\psi}_j; \underline{\sigma}_j, \underline{\eta}_j;\chi_j; u_j ),$$
$$\sum_{d\geq 0}S_{d; \mathcal S, O_E}(\underline{\rho}_1, \underline{\psi}_1; \underline{\sigma}_1, \underline{\eta}_1;\chi_1; u_1 )\sum_{d\geq d_2\geq \ldots \geq d_{r}\geq 0}\prod_{j=2}^rS_{d_j; \mathcal S, O_E}(\underline{\rho}_j, \underline{\psi}_j; \underline{\sigma}_j, \underline{\eta}_j;\chi_j; u_j ).$$
\end{corollary}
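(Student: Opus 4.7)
The plan is to reduce the convergence of both multi-series to the exponential growth of $v_F(S_{d;\mathcal S, O_E}(\underline{\rho}_i, \underline{\psi}_i; \underline{\sigma}_i, \underline{\eta}_i; \chi_i; u_i))$ noted in Remark \ref{RemarkS4.1}. Write $T_i(d) := S_{d;\mathcal S, O_E}(\underline{\rho}_i, \underline{\psi}_i; \underline{\sigma}_i, \underline{\eta}_i; \chi_i; u_i)$. First I would re-run the proof of Theorem \ref{TheoremS4.1} with the data of index $i$ (so $(s,n) = (s_i, n_i)$) to extract, for each $i \in \{1,\ldots,r\}$, constants $A_i > 0$, $B_i \in \mathbb N_{\geq 1}$ and $C_i, D_i \in \mathbb R$ depending on $u_i$ such that
\[
v_F(T_i(d)) \;\geq\; A_i\, p^{\lfloor d/B_i\rfloor} - C_i\, d - D_i
\]
for all $d$ sufficiently large. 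In the same proof (via the affine bound on $v_F(\prod_\ell \rho_\ell(N_{E/K}(a))^{n(\psi_\ell)})$ combined with the fact that $\chi_i$ factors through the finite quotient $\mathcal I(\mathcal B)/\mathcal P(\mathcal B, N)$ and hence has bounded $v_F$), one also obtains a uniform affine lower bound $v_F(T_i(d)) \geq -E_i\, d - F_i$ valid for every $d \geq 0$.

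For fixed $d \geq 0$, the inner sum
\[
S(d) := \sum_{d > d_2 > \cdots > d_r \geq 0} \prod_{j=2}^r T_j(d_j)
\]
is \emph{finite}: it has $\binom{d}{r-1}$ terms in the strict case and $\binom{d+r-1}{r-1}$ terms in the non-strict case. Using $d_j \leq d$ for $j \geq 2$, the ultrametric inequality and the affine bounds yield
\[
v_F(S(d)) \;\geq\; \min_{d > d_2 > \cdots > d_r \geq 0}\ \sum_{j=2}^r v_F(T_j(d_j)) \;\geq\; -\Bigl(\sum_{j=2}^r E_j\Bigr)\, d - \sum_{j=2}^r F_j.
\]
Combining with the exponential lower bound on $v_F(T_1(d))$, the $d$-th summand of the outer series satisfies, for $d$ large,
\[
v_F\bigl(T_1(d)\, S(d)\bigr) \;\geq\; A_1\, p^{\lfloor d/B_1\rfloor} - \Bigl(C_1 + \sum_{j=2}^r E_j\Bigr)\, d - \Bigl(D_1 + \sum_{j=2}^r F_j\Bigr),
\]
which tends to $+\infty$ as $d \to \infty$. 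Completeness of $(F, v_F)$ then gives convergence of $\sum_{d \geq 0} T_1(d)\, S(d)$ in $F$.

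The argument is identical for the non-strict version, the only change being the slightly larger binomial count, which is still polynomial in $d$ and is dwarfed by the exponential term in the ultrametric estimate. The sole technical obstacle is the bookkeeping required to extract simultaneously the exponential lower bound and the uniform affine lower bound on $v_F(T_i(d))$ from the proof of Theorem \ref{TheoremS4.1}; once these two estimates are made explicit the rest of the argument is formal, and no new vanishing input beyond Lemma \ref{LemmaS3.2} is needed.
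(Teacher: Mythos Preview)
Your proposal is correct and is precisely the argument the paper intends: the authors simply cite ``the proof of Theorem~\ref{TheoremS4.1} and Remark~\ref{RemarkS4.1}'' without spelling out the details, and what you have written is exactly the unpacking of that reference --- exponential growth of $v_F(T_1(d))$ from Remark~\ref{RemarkS4.1}, combined with the affine lower bound on each $v_F(T_j(d))$ coming from the termwise estimates in the proof of Theorem~\ref{TheoremS4.1}. One cosmetic remark: the binomial counts you mention are irrelevant in the ultrametric world (the valuation of a finite sum is bounded below by the minimum of the valuations, regardless of the number of terms), so the phrase ``dwarfed by the exponential term'' can simply be deleted.
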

${}$\par

We furthermore assume that  $n(\eta_{j,i})\in \mathbb Z , j=1, \ldots, n_i, i=1, \ldots, r,$ and that  all our admissible maps are algebraic. Let $v$ be a finite place of $K,$ and  Let $P_v$ be the maximal ideal of $A$ corresponding to the finite place $v.$\par
Let's set:
$$\mathbb S_{v,F,(n_1, \ldots, n_r)}= (F^\times \times \mathbb Z_p^{n_1} \times {\mathbb Z}^{n_1})\times \cdots \times (F^\times \times \mathbb Z_p^{n_r} \times {\mathbb Z}^{n_r}) .$$\par
For $i=1, \ldots, s, j=1, \ldots, s_i,$ let  $\rho_{j,i}: K(\psi_i)\hookrightarrow F$ be an $\mathbb F_p$-algebra homomorphism. For $i=1, \ldots,r,$ we set as above:
$$\forall I\in \mathcal I(A), \quad \underline{\rho}_i(\underline{\psi}_i(I))=\prod_{j=1}^{s_i}\rho_{j,i}(\psi_{j,i}(I))\in F.$$\par
For $i=1, \ldots, r, $ $j=1, \ldots, n_i,$  let $\sigma_{j,i}:  K_v(\langle \eta_i \rangle _v)\hookrightarrow F$ be a continuous $\mathbb F_p$-algebra homomorphism. For $i\in \{1, \ldots, r\},$ for $u_i=(x; z_1, \ldots, z_{n_i}; \underline{\delta}_i)\in F^\times \times \mathbb Z_p^{n_i}\times \mathbb Z^{n_i},$ for $I\in \mathcal I(A),$ we set:
$$I_{\underline{\sigma}_i, \underline{\eta}_i}^{u_i}= x^{\deg I} \prod_{j=1}^{n_i} \omega_v( \eta_{j,i}(I))^{\delta_{j,i}} \prod_{j=1}^{n_i} \sigma_{j,i}(\langle \eta_{j,i}(I) \rangle _v^{z_j})\in F^\times.$$\par
Let $E/K$ be a finite extension, and let $\mathcal B,N, \chi_1, \ldots, \chi_r,$ and $\mathcal S$ as above.  Let $\mathcal S_v$ be the union of $S$ and the maximal ideals of $O_E$ above $P_v$ and that do not divide $\mathcal B.$ \par
We fix $i\in \{1, \ldots, r\}.$ Let $d\geq 0$ be an integer, for $u_i\in F^\times \times \mathbb Z_p^{n_i}\times \mathbb Z^{n_i},$  we set:
$$S_{v;d; \mathcal S, O_E}(\underline{\rho}_i, \underline{\psi}_i; \underline{\sigma}_i, \underline{\eta}_i;\chi_i; u_i )=\sum_{\substack{\frak I \in \mathcal I_{\mathcal S}(\mathcal B), \frak I\subset O_E \\ \deg N_{E/K}(\frak I)=d}} \chi_i(\frak I) \underline{\rho}_i(\underline{\psi}_i(N_{E/K}(\frak I)))N_{E/K}(\frak I)_{\underline{\sigma}_i, \underline{\eta}_i}^{-u_i},$$

$$S_{v;d; \mathcal S_v, O_E}(\underline{\rho}_i, \underline{\psi}_i; \underline{\sigma}_i, \underline{\eta}_i;\chi_i; u_i )=\sum_{\substack{\frak I \in \mathcal I_{\mathcal S_v}(\mathcal B), \frak I\subset O_E \\ \deg N_{E/K}(\frak I)=d}} \chi_i(\frak I) \underline{\rho}_i(\underline{\psi}_i(N_{E/K}(\frak I)))N_{E/K}(\frak I)_{\underline{\sigma}_i, \underline{\eta}_i}^{-u_i}.$$

By the proof of Theorem \ref{TheoremS4.2}, we get:

\begin{corollary}\label{CorollaryS6.2} Let $u=(u_1, \ldots, u_r)\in \mathbb S_{v,F, (n_1, \ldots, n_r)},$ then the following sums converge in $F:$
$$\sum_{d\geq 0}S_{v; d; \mathcal S_v, O_E}(\underline{\rho}_1, \underline{\psi}_1; \underline{\sigma}_1, \underline{\eta}_1;\chi_1; u_1 )\sum_{d>d_2>\ldots >d_{r}\geq 0}\prod_{j=2}^rS_{v;d_j; \mathcal S, O_E}(\underline{\rho}_j, \underline{\psi}_j; \underline{\sigma}_j, \underline{\eta}_j;\chi_j; u_j ),$$
$$\sum_{d\geq 0}S_{v;d; \mathcal S_v, O_E}(\underline{\rho}_1, \underline{\psi}_1; \underline{\sigma}_1, \underline{\eta}_1;\chi_1; u_1 )\sum_{d\geq d_2\geq \ldots \geq d_{r}\geq 0}\prod_{j=2}^rS_{v;d_j; \mathcal S, O_E}(\underline{\rho}_j, \underline{\psi}_j; \underline{\sigma}_j, \underline{\eta}_j;\chi_j; u_j ).$$
\end{corollary}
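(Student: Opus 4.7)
The plan is to mirror the argument used for Corollary \ref{CorollaryS6.1}, replacing the input from Theorem \ref{TheoremS4.1} by its $v$-adic counterpart Theorem \ref{TheoremS4.2}. The essential point is that the proof of Theorem \ref{TheoremS4.2} does not merely yield convergence of the single series $\sum_{d\geq 0} S_{v;d;\mathcal S_v,O_E}(\ldots)$, but actually produces an explicit exponential lower bound on the valuation of each partial-degree coefficient. It is this exponential decay, not just convergence, that makes the nested sums absolutely convergent in the non-archimedean topology of $F$.

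First, I would apply the key estimate obtained in the proof of Theorem \ref{TheoremS4.2}: for each fixed index $i\in\{1,\ldots,r\}$ and each choice of parameters $u_i$, there exist an integer $C_i\geq 1$ and real constants $D''_i,D'''_i$ (depending on $i$ and $u_i$, but not on $d$) such that
$$v_F\bigl(S_{v;d;\mathcal S,O_E}(\underline\rho_i,\underline\psi_i;\underline\sigma_i,\underline\eta_i;\chi_i;u_i)\bigr)\ \geq\ p^{[d/(C_in_i)]}\,{\rm Inf}\{v_F(\sigma_{j,i}(\pi_v)),\,1\leq j\leq n_i\}-D''_i\,d-D'''_i,$$
for all $d$ sufficiently large, and similarly for $S_{v;d;\mathcal S_v,O_E}$. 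This is exactly the bound on $V_{m,j}$ recorded near the end of the proof of Theorem \ref{TheoremS4.2}, once one takes $l=[d/(C_in_i)]$. Since ${\rm Inf}\{v_F(\sigma_{j,i}(\pi_v))\}>0$, it follows that $v_F(S_{v;d;\cdot}(\ldots;u_i))\to+\infty$ super-polynomially in $d$; in particular $|S_{v;d;\cdot}(\ldots;u_i)|_F\leq M_i$ for some constant $M_i$ independent of $d$.

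Next, for each fixed $d\geq 0$, the inner sum $\sum_{d>d_2>\cdots>d_r\geq 0}\prod_{j=2}^r S_{v;d_j;\mathcal S,O_E}(\ldots;u_j)$ is a \emph{finite} sum and so defines an element of $F$. By the ultrametric inequality, its absolute value is bounded by $\max_{d>d_2>\cdots>d_r\geq 0}\prod_{j=2}^r|S_{v;d_j;\mathcal S}(\ldots;u_j)|_F\leq \prod_{j=2}^r M_j$, a bound independent of $d$. Combining this with the exponential decay of $|S_{v;d;\mathcal S_v}(\ldots;u_1)|_F$, the general term of the outer sum $\sum_{d\geq 0}$ has valuation tending to $+\infty$, hence the whole iterated series converges in $F$. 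The second iterated sum, with the non-strict inequalities $d\geq d_2\geq\cdots\geq d_r\geq 0$, is handled identically; the change from strict to non-strict merely enlarges the finite index set in the inner sum and does not affect the uniform bound $\prod_{j=2}^r M_j$.

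The main obstacle, such as it is, is purely bookkeeping: one must be sure that the estimate extracted from the proof of Theorem \ref{TheoremS4.2} really is of the required exponential form and that the constants $C_i,D''_i,D'''_i$ do not secretly depend on $d$. This is a matter of rereading that proof and checking that Lemma \ref{LemmaS3.2} is invoked uniformly, exactly as in Remark \ref{RemarkS4.1}. No new vanishing or cancellation result is needed, and the argument does not interact with the $\omega_v$-twists or the $\underline\delta_i$-coordinates in $\mathbb S_{v,F,(n_1,\ldots,n_r)}$ beyond what is already handled in Theorem \ref{TheoremS4.2}.
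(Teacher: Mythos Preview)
Your proposal is correct and is exactly the approach the paper intends: the paper's entire proof is the single clause ``By the proof of Theorem \ref{TheoremS4.2}, we get'', and you have simply unpacked what that means --- extracting the exponential-in-$d$ lower bound on $v_F(S_{v;d;\cdot})$ from that proof (as in Remark \ref{RemarkS4.1}), bounding the finite inner sums via the ultrametric inequality, and concluding. There is nothing to add.
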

${}$\par
Let's pursue  our basic example \ref{ANDERSON}. Let $\eta: \mathcal I(A)\rightarrow \overline{K}_\infty^\times$ be an admissible map such that $n(\eta)\in \mathbb Z.$ Let $r\geq 1$ be an integer and let $z_1, \ldots, z_r$ be $r$ indeterminates over $\mathbb C_\infty.$ Let $\underline{n}=(n_1, \ldots, n_r)\in \mathbb Z^r.$ Let's define  $Z_{\eta, A}(\underline{n}; \underline{z})\in K(\eta_i(I), I\in \mathcal I(A), i=1, \ldots, r)[z_2, \ldots, z_r][[z_1]]$ to be the following sum:
$$\sum_{d\geq0} \sum_{\substack{I_1, \ldots, I_r\in \mathcal I(A) \\ I_1, \ldots, I_r\subset A \\d=\deg I_1>\deg I_2>\cdots >\deg I_r\geq 0}} \frac{1}{\eta_1(I_1)^{n_1}\cdots \eta_r(I_r)^{n_r}} z_1^{\deg I_1}\cdots z_r^{\deg I_r};$$
let's also define $Z^*_{\eta, A}(\underline{n}; \underline{z})\in K(\eta_i(I), I\in \mathcal I(A), i=1, \ldots, r)[z_2, \ldots, z_r][[z_1]]$ as the following sum:
$$\sum_{d\geq0}\sum_{\substack{I_1, \ldots, I_r\in \mathcal I(A) \\ I_1, \ldots, I_r\subset A \\ d=\deg I_1\geq \deg I_2\geq \cdots \geq \deg I_r\geq 0}}\frac{1}{\eta_1(I_1)^{n_1}\cdots \eta_r(I_r)^{n_r}} z_1^{\deg I_1}\cdots z_r^{\deg I_r}.$$
Then, as in example \ref{ANDERSON}, by Corollary \ref{CorollaryS6.1}, we deduce that $Z_{\eta, A}(\underline{n}; \underline{z})$ and $Z^*_{\eta, A}(\underline{n}; \underline{z})$ define entire functions on $\mathbb C_\infty^r.$ Let's also observe that, by Lemma \ref{LemmaS3.2}, if $n_1n(\eta)\leq 0, $ then:
$$Z_{\eta, A}(\underline{n}; \underline{z}),Z^*_{\eta, A}(\underline{n}; \underline{z})\in K(\eta_i(I), I\in \mathcal I(A), i=1, \ldots, r)[z_1, z_2, \ldots, z_r].$$
Let's observe that, if $\eta=[.], $ $A=\mathbb F_q[\theta],$ and $\pi=\frac{1}{\theta},$ we recover Thakur's multiple zeta values (as  in example \ref{Simple}, one  can also recover their deformations in Tate algebras treated in   \cite{PEL2}):
\begin{align*}
Z_{\eta, A}(\underline{n}; \underline{z})=\sum_{d\geq0} \sum_{\substack{a_1, \ldots, a_r\in A_+ \\ d=\deg a_1>\cdots >\deg a_r\geq 0}} \frac{1}{a_1^{n_1}\cdots  a_r^{n_r}} \prod_{i=1}^rz_i^{\deg_\theta a_i}\in K[z_2, \ldots, z_r][[z_1]].
\end{align*}
Let's assume that $\eta $ is algebraic. Let $v$ be a finite place of $K,$ and let $P_v$ be the maximal ideal of $A$ associated to $v.$ Let $\mathcal I(P_v)$ be the group of fractional ideals of $A$ which are relatively prime to $P_v.$ Let $\mathbb C_v$ be the $v$-adic completion of $\overline{K}_v.$ We define  $Z_{v,\eta, A}(\underline{n}; \underline{z})\in K(\eta_i(I), I\in \mathcal I(A), i=1, \ldots, r)[z_2, \ldots, z_r][[z_1]]$ to be the following sum:
$$\sum_{d\geq0}\sum_{\substack{I_1\in \mathcal I(P_v), I_2, \ldots, I_r\in \mathcal I(A) \\ I_1, \ldots, I_r\subset A \\ d=\deg I_1>\cdots >\deg I_r\geq 0}}\frac{1}{\eta_1(I_1)^{n_1}\cdots \eta_r(I_r)^{n_r}} z_1^{\deg I_1}\cdots z_r^{\deg I_r};$$
we also define $Z^*_{v,\eta, A}(\underline{n}; \underline{z})\in K(\eta_i(I), I\in \mathcal I(A), i=1, \ldots, r)[z_2, \ldots, z_r][[z_1]]$ to be:
$$\sum_{d\geq0}\sum_{\substack{I_1\in \mathcal I(P_v), I_2, \ldots, I_r\in \mathcal I(A) \\ I_1, \ldots, I_r\subset A \\  d=\deg I_1\geq \cdots \geq \deg I_r\geq 0}} \frac{1}{\eta_1(I_1)^{n_1}\cdots \eta_r(I_r)^{n_r}} z_1^{\deg I_1}\cdots z_r^{\deg I_r}.$$
Again, as in example \ref{ANDERSON}, by Corollary \ref{CorollaryS6.2}, we deduce that $Z_{v,\eta, A}(\underline{n}; \underline{z})$ and $Z^*_{v,\eta, A}(\underline{n}; \underline{z})$ define entire functions on $\mathbb C_v^r.$\par
For example,  if $\eta=[.], $ $A=\mathbb F_q[\theta],$ and $\pi=\frac{1}{\theta},$ we have:
$$Z_{v,\eta, A}(\underline{n}; \underline{z})=\sum_{d\geq0} \sum_{\substack{a_1, \ldots, a_r\in A_+, a_1\not \in P_v \\ d=\deg a_1>\deg a_2>\cdots >\deg a_r\geq 0}}\frac{1}{a_1^{n_1}\cdots  a_r^{n_r}} \prod_{i=1}^rz_i^{\deg_\theta a_i}.$$
Note that, if $n_1, \ldots, n_r\leq 0,$ then:
$$Z_{v,\eta, A}(\underline{n}; \underline{z})\equiv Z_{\eta, A}(\underline{n}; \underline{z})\pmod{P^{n_1}A[z_1, \ldots, z_r]}.$$
Furthermore, for $\underline{n}\in \mathbb Z^r,$  we observe  that $Z_{v,\eta, A}(\underline{n}; \underline{z})$ is the $P_v$-adic limit of certain sequences $Z_{\eta, A}((m_k,n_2, \ldots, n_r); \underline{z})\in K[z_1, \ldots, z_r],$ $m_k\leq 0,$ where $m_k$  is suitably chosen and $m_k$ converges $p$-adically to $n_1.$ Indeed, let $\mathbb T_{\underline{z}}( K_v)$ be the Tate algebra in the variables $z_1, \ldots, z_r,$ with coefficients in $K_v$ equipped with the Gauss valuation associated to $v$ and still denoted by $v.$ Observe that, if $m\leq 0,$ we have:
$$\deg_{z_1}Z_{\eta, A}((m, n_2, \ldots, n_r); \underline{z})\leq \frac{\ell_q(-m)}{q-1},$$
where $\ell_q(-m)$ is the sum of the $q$-adic digits of $-m.$ Now, as in the proof of Theorem \ref{TheoremS4.2}, for $k\geq 0,$ let's select
$-m_k\in \mathbb N$ such that:

i) $-m_k\equiv -n_1\pmod{q^{k+1}\mathbb Z_p},$

ii)$-m_k\equiv -n_1\pmod{q^{\deg P_v}-1},$

iii) $\ell_q(-m_k)\leq (k+\deg P_v)(q-1),$

iv) $-m_k\geq q^{k+1}.$\par
\noindent For example, if $-n_1=\sum_{i\geq 0} a_i q^i,$ $a_i\in \{0, \ldots, q-1\},$ select $\delta_k\in \{1, \ldots, q^{\deg P_v}-1\}$ such that $-n_1-\sum_{i= 0}^k a_i q^i \equiv \delta_k \pmod{q^{\deg P_v}-1},$ and set:
$$-m_k=\sum_{i= 0}^k a_i q^i+\delta_k q^{(k+1)\deg P_v}.$$
\noindent Then:
\begin{align*}
&v(Z_{\eta, A}((m_k, n_2, \ldots, n_r);\underline{z})-\sum_{d=0}^{k+\deg P_v}\sum_{\substack{a_1, \ldots, a_r\in A_+, a_1\not \in P_v \\ d=\deg a_1>\cdots >\deg a_r\geq 0}} \frac{1}{a_1^{n_1}\cdots  a_r^{n_r}} \prod_{i=1}^rz_i^{\deg_\theta a_i}) \\
&\quad \geq q^{k+1}-(\mid n_2\mid +\cdots +\mid n_r\mid )(\deg P_v +k).
\end{align*}
\noindent Thus in $\mathbb T_{\underline{z}}(K_v):$
$$Z_{v,\eta, A}(\underline{n}; \underline{z})=\lim_kZ_{\eta, A}((m_k, n_2, \ldots, n_r);\underline{z}).$$

To our knowledge, and  in the case $A=\mathbb F_q[\theta],$  these type of elements   $Z_{v,\eta, A}(\underline{n}; \underline{1})\in K_v$ have not been studied so far, and it would be very interesting to obtain informations on such type  of objects, especially  in the spirit of \cite{AND&THA2}.



\begin{thebibliography}{40}
\bibitem{AND} G. Anderson, Rank one elliptic $A$-modules and $A$-harmonic series, {\it Duke Mathematical Journal}
 {\bf 73} (1994), 491-542.
 \bibitem{AND2} G. Anderson, Log-Algebraicity of Twisted $A$-Harmonic Series and Special Values of $L$-series in Characteristic $p$, {\it Journal of Number Theory} {\bf 60} (1996), 165-209.
 \bibitem{AND&THA} G. Anderson, D. Thakur, Tensor powers of the Carlitz module and zeta values, {\it Annals of Mathematics} {\bf 132} (1990), 159-191.
 \bibitem{AND&THA2} G. Anderson, D. Thakur, Multizeta values for $\mathbb F_q[t],$ their period interpretation, and relations between them,
{\it International Mathematics Research Notices} no. 11 (2009), 2038-2055.
\bibitem{AP} B. Angl\`es, F. Pellarin, Functional identities for $L$-series values in positive characteristic, {\it Journal of Number Theory} {\bf 142}  (2014), 223-251.
\bibitem{AP2} B. Angl\`es, F. Pellarin, Universal Gauss-Thakur sums and $L$-series, {\it Inventiones mathematicae} {\bf 200} (2015), 653-669 .
\bibitem{AT} B. Angl\`es, L. Taelman, with an appendix by V. Bosser, Arithmetic of characteristic $p$ special $L$-values, {\it Proceedings of the London Mathematical Society} {\bf 110} (2015), 1000-1032.
\bibitem{APT} B. Angl\`es, F. Pellarin, F. Tavares Ribeiro, with an appendix by F. Demeslay,  Arithmetic of positive characteristic $L$-series values in Tate algebras , {\it Compositio Mathematica} {\bf 152} (2016),1-61.
\bibitem{APTR} B. Angl\`es, F. Pellarin, F. Tavares Ribeiro, Anderson-Stark Units for $\mathbb F_q[\theta],$ arXiv: 1501.06804 (2015).
\bibitem{ATR} B. Angl\`es, F. Tavares-Ribeiro, Arithmetic of function fields units, arXiv:1506.06286 (2015).
\bibitem{ANT} B. Angl\`es, T. Ngo Dac, F. Tavares Ribeiro, Exceptional Zeros of $L$-series and Bernoulli-Carlitz Numbers (with an appendix by B. Angl\`es, D. Goss, F. Pellarin, F. Tavares Ribeiro), arXiv: 1511.06209 (2015) .
\bibitem{ANT2} B. Angl\`es, T. Ngo Dac, F. Tavares Ribeiro, Arithmetic of special values of twisted zeta functions, in preparation.

 \bibitem{BOC} G. B\" ockle, Global $L$-functions over functions fields, {\it Mathematische Annalen} {\bf 323} (2002), 737-795 .
 \bibitem{CHA} C.-Y. Chang, Linear independence of monomials of multizeta values in positive characteristic, {\it Compositio Mathematica} {\bf 150} (2014), 1789-1808.
 \bibitem{CPY} C.-Y. Chang, M. Papanikolas, J. Yu, An effective criterion for Eulerian multizeta values in positive characteristic, arXiv: 1411.0124 (2014).
\bibitem{DEM} F. Demeslay, A class formula for $L$-series in positive characteristic, arXiv:1412.3704 (2014).
\bibitem{FAN1} J. Fang, Special $L$-values of abelian $t$-modules, {\it Journal of Number Theory} {\bf 147} (2015), 300-325.
\bibitem{FAN2} J. Fang, Equivariant Special $L$-values of abelian $t$-modules, arXiv: 1503.07243 (2015) .
\bibitem{GOS1} D. Goss, $v$-adic zeta functions, $L$-series and measures for function fields, {\it Inventiones mathematicae} {\bf 55} (1979), 107-116.
\bibitem{GOS2} D. Goss, On a new type of $L$ function for algebraic curves over finite fields, {\it Pacific Journal of Mathematics} {\bf 105} (1983), 143-181.
\bibitem{GOS} D. Goss, Basic Structures of Function Field Arithmetic, Springer, 1996.
\bibitem{GOS4} D. Goss, Zeta phenomenology,  Noncommutative geometry, arithmetic, and related topics, Johns Hopkins University Press, Baltimore, 2011, 159-182.
\bibitem{GOS3} D. Goss, On the $L$-series of F. Pellarin, {\it Journal of Number Theory} {\bf 133} (2013), 955-962.
\bibitem{LAF} V. Lafforgue, Valeurs sp\'eciales de fonctions $L$ en caract\'eristique $p,$ {\it Journal of Number Theory} {\bf  129} (2009), 2600-2634.
\bibitem{LATH} J.  Lara Rodr\' iguez, D. Thakur,   Zeta-like multizeta values for $\mathbb F_q[t],$ {\it  Indian Journal of Pure and Applied Mathematics } {\bf  45} (2014),  787-801.
\bibitem{LATH2}   J. Lara Rodr\' iguez, D. Thakur,  Multizeta shuffle relations for function fields with non rational infinite place, {\it  Finite Fields and their Applications} {\bf 37} (2016), 344-356.
\bibitem{PAP} M. Papanikolas, Log-Algebraicity on Tensor Powers of the Carlitz Module and Special Values of Goss $L$-Functions, work in progress.
\bibitem{PEL} F. Pellarin, Values of certain $L$-series in positive characteristic, {\it Annals of Mathematics} {\bf 176} (2012), 2055-2093 .
\bibitem{PEL2} F. Pellarin, A note on multiple zeta values in Tate algebras, arXiv:1601.07348 (2016).
\bibitem{PEPER} F. Pellarin, R. Perkins, On certain generating functions in positive characteristic, arXiv: 1409.6006 (2014).
\bibitem{PEPER2} F. Pellarin, R. Perkins, On twisted $A$-harmonic sums and Carlitz finite zeta values, arXiv:1512.05953 (2015).
\bibitem{PER1} R. Perkins, On Pellarin's $L$-series, {\it Proceedings of the American Mathematical Society} {\bf 142} (2014), 3355-3368.
\bibitem{PER2} R. Perkins,  An exact degree for multivariate special polynomials, {\it Journal of Number Theory} {\bf 142} (2014), 252-263.
\bibitem{PER3} R. Perkins, Explicit formulae for $L$-values in positive characteristic, {\it Mathematische Zeitschrift} {\bf 278} (2014), 279-299.

\bibitem{TAE1} L. Taelman, A Dirichlet unit theorem for Drinfeld modules, {\it Mathematische Annalen} {\bf 348} (2010), 899-907.
\bibitem{TAE2} L. Taelman, Special $L$-values of Drinfeld modules, {\it Annals of Mathematics} {\bf 175} (2012), 369-391.
\bibitem{TAE3} L. Taelman, A Herbrand-Ribet theorem for function fields, {\it Inventiones  mathematicae} {\bf 188} (2012), 253-275.
\bibitem{THA} D. Thakur, Function Field Arithmetic, World Scientific, 2004.
\bibitem{THA2} D. Thakur, Relations between multizeta values for $\mathbb F_q[t],$ {\it International Mathematics Research Notices} no. 12 (2009), 2318-2346.
\bibitem{THA3} D. Thakur, Shuffle relations for function field multizeta values, {\it International Mathematics Research Notices} no. 11 (2010), 1973-1980.
\end{thebibliography}
\end{document}